\documentclass{amsart}

\usepackage{amsmath}
\usepackage{graphicx}
\usepackage{amsfonts}
\usepackage{amssymb}

\newtheorem{theorem}{Theorem}
\theoremstyle{plain}

\newtheorem{corollary}{Corollary}

\newtheorem{example}{Example}

\newtheorem{proposition}{Proposition}
\newtheorem{remark}{Remark}

\numberwithin{equation}{section}

\begin{document}

\title
{On the bialgebra structure of the free loop homology}

\author{Samson Saneblidze}
\address{Samson Saneblidze, A. Razmadze Mathematical Institute,
I.Javakhishvili Tbilisi State University 2, Merab Aleksidze II Lane,
 Tbilisi 0193, Georgia}
 \email{samson.saneblidze@tsu.ge}

\thanks{This work was supported by Shota Rustaveli
National Science Foundation of Georgia (SRNSFG) [grant number
FR-23-5538]}

\date{}
\subjclass[2010]{ 55P35, 55U05, 52B05, 18F20}

\keywords{Free loop space, string topology product, intersection
product, loop bialgebra, cubical sets, permutahedral sets, necklaces,
(co)Hochschild complex}

\begin{abstract}

We introduce  a commutative product of degree $-n$ on the homology
$H_\ast(X)$  of an $n$-dimensional special cubical set $X$
and lift it on the free loop homology $H_\ast(\Lambda M)$
for $M=|X|$ to be the geometric realization.
These products agree  with the  intersection and string topology  products respectively when $M$ is an oriented closed manifold, and  we
 establish the compatibility relation between the string topology
 product and the standard  coproduct on $H_\ast(\Lambda M).$
Motivated by the above relationship we introduce the notion of loop
bialgebra for differential graded coalgebras $C$ by means of  the
coHochschild complex $\Lambda C.$ We calculate  the loop bialgebra structure for some spaces.

\end{abstract}

\maketitle

 \section{Introduction}

 Let $M$ be an  oriented closed triangulated $n$-manifold.
 An initial motivation of the paper was to establish relationship
 between
 the string topology product of degree $-n$  introduced by Chas and Sullivan on   the homology $H_\ast(\Lambda M)$
 of the free loop space $\Lambda M$  \cite{CS}
 \begin{equation}\label{stringM}
 \Cap:H_\ast(\Lambda M)\otimes H_\ast(\Lambda M)\rightarrow
 H_{\ast-n}(\Lambda M)
 \end{equation}
 and the standard coproduct
 \[\Delta:  H_\ast(\Lambda M)\rightarrow H_\ast(\Lambda M)\otimes
 H_\ast(\Lambda M)      \]
 in fact defined for any topological space $Y$ instead of $\Lambda M.$
 In this way we first define the classical intersection product
\begin{equation}\label{stcap}\sqcap:H_p( M)\otimes H_q( M)\rightarrow
H_{p+q-n}( M)\end{equation}
as induced  by a  chain-level pairing of degree $-n$
\[\sqcap: C_p(K^{_\Box})\otimes
 C_q(K^{_\Box})\rightarrow C_{p+q-n}(K^{_\Box}),\]
where $K^{_\Box}$ is a cubical subdivision  of $M$ canonically derived
from a triangulation $K$ of $M.$
Then  without direct using the Poincar\'{e} isomorphism
$H_i(M)\overset{\approx}{\rightarrow} H^{n-i}(M)$
we establish
that $\sqcap$ is a map of $H_\ast(M)$ -- bicomodules.

\begin{theorem}\label{intersection}
The following diagrams

\begin{equation}\label{biintersection1}
\begin{array}{cccccc}
H_\ast( M)\otimes H_\ast( M) \otimes H_\ast( M) &
\xrightarrow{\sqcap\, \otimes 1}                 &
  H_\ast( M)\otimes H_\ast( M)\\
       \hspace{0.15in} \uparrow\,  _{(1\otimes T )\circ( \Delta\otimes
       1 )} &                  &
       \hspace{-0.17in}

          _{\Delta}
 \uparrow \vspace{1mm}\\
 H_\ast( M)\otimes H_\ast( M) &  \xrightarrow{\sqcap} &
   H_\ast( M)
  \end{array}
\end{equation}
and

\begin{equation}\label{biintersection2}
\begin{array}{cccccc}
H_\ast( M)\otimes H_\ast( M) \otimes H_\ast( M) & \xrightarrow{1\otimes\,
\sqcap}                 &
  H_\ast( M)\otimes H_\ast( M)\\
       \hspace{-0.1in}   _{(T\otimes 1 )\circ( 1\otimes \Delta
       )}\uparrow &                  &
       \hspace{-0.17in}

          _{\Delta}
 \uparrow \vspace{1mm}\\
 H_\ast( M)\otimes H_\ast( M) &  \xrightarrow{\sqcap} &
   H_\ast( M)
  \end{array}
\end{equation}
are  commutative.
\end{theorem}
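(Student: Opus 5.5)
Diagram (\ref{biintersection1}) says that for homology classes $a,b$ one has
\[
\Delta(a\sqcap b)=\sum \pm\, (a'\sqcap b)\otimes a'' ,\qquad \Delta a=\sum a'\otimes a''.
\]
Diagram (\ref{biintersection2}) is the mirror statement $\Delta(a\sqcap b)=\sum \pm\, b'\otimes(a\sqcap b'')$. I will prove both at the chain level on the cubical subdivision $K^{_\Box}$, using the explicit pairing $\sqcap$ and the Serre diagonal on cubes. Since $\sqcap$ is commutative up to sign, (\ref{biintersection2}) then follows from (\ref{biintersection1}) by applying the twist $T$ and commutativity. So the real work is (\ref{biintersection1}).

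\textbf{Step 1: the chain-level pairing.} Every cube of $K^{_\Box}$ comes with a product structure coming from the triangulation $K$. A cube $e$ determines a dual cube $e^{\ast}$ of complementary dimension that meets it transversally in a single vertex. The chain pairing $\sqcap$ is essentially the cubical version of ``dual-cell cap product''. For cubes $e_1,e_2$ in a common top cube, $e_1\sqcap e_2$ is their transverse intersection face (with sign), and it is zero otherwise. Equivalently, $a\sqcap b=\mathrm{PD}^{-1}(\mathrm{PD}(b))\cap a$, where $\mathrm{PD}(b)$ is the cochain dual to the dual-cell decomposition of $b$. I will establish the chain identity
\[
\sqcap=\ \cap\circ(1\otimes \varphi),
\]
where $\varphi:C_\ast(K^{_\Box})\to C^{n-\ast}(K^{_\Box})$ is the cochain map sending a cube to the characteristic cochain of its dual, and $\cap$ is the cubical cap product built from the Serre diagonal. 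This is the one place where the specific construction of $\sqcap$ is used. I only need that $\varphi$ is a chain map, not that it is an isomorphism, which fits the claim that Poincar\'e duality is not used directly.

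\textbf{Step 2: coassociativity gives the bicomodule identity.} Write the Serre diagonal as $\Delta c=\sum c'\otimes c''$. The cap product is then $c\cap u=\sum \pm u(c')\,c''$ (or the mirror convention). Coassociativity of the Serre diagonal gives, exactly at the chain level,
\[
\Delta(c\cap u)=\sum \pm (c'\cap u)\otimes c'' ,
\]
which is the standard statement that $\cap u$ is a map of right comodules. Substituting $u=\varphi(b)$ and using Step 1 yields (\ref{biintersection1}) on chains, hence in homology. Equation (\ref{biintersection2}) follows either symmetrically, using the other cap convention (which is a left comodule map), or via commutativity of $\sqcap$ in homology.

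\textbf{Main obstacle.} The hard part is Step 1: identifying the combinatorial pairing on $K^{_\Box}$ with a cap product against a chain-level duality cochain, including signs. If an exact chain identity fails because of degenerate contributions, I will instead build a chain homotopy between $\sqcap$ and $\cap\circ(1\otimes\varphi)$ cube by cube, using that each cube of $K^{_\Box}$ is a product of simplicial-dual pieces and hence acyclic. The homology-level statement only needs this homotopy. The sign bookkeeping, where the Koszul signs from $T$ must match the degree $-n$ shift, I will check on a single $n$-cube.
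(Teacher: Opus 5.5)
Your strategy is sound. For (\ref{biintersection1}) it is the paper's argument in more conceptual form; for (\ref{biintersection2}) it is a different argument.

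\textbf{Comparison with the paper.} The paper deduces the theorem from Proposition~\ref{serrediagonal}. Part (i) is checked on generators:
\begin{itemize}
\item for $v=e_w$, via $u=w\times u_B\times u_A=u_D\times u_A$, so that $u_B=u_D\sqcap e_w$;
\item for $v=d^0_1(e_w)$, separately, from the recursive definition of $u\sqcap d^0_1(e_w)$.
\end{itemize}
Part (ii) comes from proving the mirrored exact identity for $\sqcap^{op}$ on $X^{op}$.

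The paper's $e_w$ computation is exactly your Step 2 for the cochain $w^*$. The rule that $u\sqcap e_w=d^1_{A_u}(u)$ precisely when $w=d^0_{B_u}(u)$ is the front-face cap product with $w^*$. Matching components of $\Delta_{\Box}(u\sqcap e_w)$ with those of $\Delta_{\Box}(u)$ is coassociativity.

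Your factorization buys uniformity. Once $\sqcap=\cap\circ(1\otimes\varphi)$ is known:
\begin{itemize}
\item the $d^0_1(e_w)$ case is automatic;
\item Proposition~\ref{algebra} becomes the statement that $\varphi$ is a chain map;
\item it is transparent why (i) is a strict chain identity while (ii) holds only up to homotopy.
\end{itemize}
The paper's route stays with the explicit generators and obtains (ii) from the $X^{op}$ symmetry rather than from (i).

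\textbf{First repair: the map $\varphi$.} Your $\varphi$ is misdescribed. In $K^{\Box}$ the dual of $w$ is the block $D_w$, not a cube; it is represented by the single chosen cube $e_w$. Moreover $u\sqcap e_w\neq 0$ for every $u$ having $w$ as a front face, whether or not $u$ lies in the top cube containing $e_w$. So ``transverse intersection inside a common top cube'' is also inaccurate.

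The map that works is Poincar\'e duality composed with $\Theta^{\Box}$:
\begin{itemize}
\item $\varphi(e_w)=w^*$;
\item $\varphi(d^0_1(e_w))$ is the signed sum of the $w'^{*}$ over $w\subset w'\not\subseteq e$ with $\dim w'=\dim w+1$;
\item $\varphi$ vanishes on the cubes that $\Theta^{\Box}$ degenerates.
\end{itemize}
With this $\varphi$, the identity $u\sqcap v=u\cap\varphi(v)$ is a direct rereading of the paper's formulas and holds exactly, so your homotopy fallback is not needed.

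The chain-map property of $\varphi$ is equivalent to Proposition~\ref{algebra}. The input is that a front face $d^0_B(e)$ of a top cube is never a $d^1$-face in $K^{\Box}$, since its minimal simplex is a vertex. Hence $\delta(w^*)$ involves only cubes having $w$ as a $d^0$-face. This is the fact behind the paper's $d^1(u)\sqcap e_w=0$.

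\textbf{Second repair: diagram (\ref{biintersection2}).} The mirror cap convention, with the cochain still coming from the second variable, gives a left coaction on the \emph{first} variable:
\[
\Delta(a\sqcap' b)=\sum\pm a'\otimes(a''\sqcap' b).
\]
That is not (\ref{biintersection2}). You would have to dualize the first variable instead, $a\sqcap' b=\varphi(a)\cap' b$ (essentially the paper's $\sqcap^{op}$), and then compare with $\sqcap$ in homology.

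Your commutativity route does work, but besides Proposition~\ref{commut} it needs cocommutativity of $\Delta$ on $H_\ast(M)$; that must be what ``applying the twist $T$'' means. From (i) and commutativity alone you only get
\[
\Delta(a\sqcap b)=\sum\pm(a\sqcap b')\otimes b''.
\]
You then need $T\circ\Delta=\Delta$ to reach $\sum\pm b'\otimes(a\sqcap b'')$. This holds because $\Delta_{\Box}$ is cocommutative up to chain homotopy, and you should state it explicitly.
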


Let $X$   be a cubical set and $Y:=|X|$ its geometric realization.
We construct
a new combinatorial model
 $|\widehat{\mathbf{\Omega}}X| \xrightarrow{\iota}
 |\widehat{\mathbf{\Lambda}} X| \xrightarrow{\zeta} |X|$
of the free loop fibration
 $\Omega Y\rightarrow \Lambda Y\rightarrow Y$ (Theorem \ref{freeloopmodel})
 where   $\widehat{\mathbf{\Omega}}X$  and $\widehat{\mathbf{\Lambda}} X$ are permutahedral sets. Then we introduce a twisted differential in the tensor product of permutahedral chain complexes
  $C^{\diamond}_\ast(\widehat{\mathbf{\Lambda}} X)\otimes
 C^{\diamond}_\ast(\widehat{\mathbf{\Omega}} X) $
 to obtain the chain complex
 $C^{\diamond}_\ast(\widehat{\mathbf{\Lambda}} X)\otimes _\tau
 C^{\diamond}_\ast(\widehat{\mathbf{\Omega}} X) $
such that there are the chain maps
\[\nu_r : C^{\diamond}_\ast(\widehat{\mathbf{\Lambda}} X)   \rightarrow   C^{\diamond}_\ast(\widehat{\mathbf{\Lambda}} X)\otimes _\tau
 C^{\diamond}_\ast(\widehat{\mathbf{\Omega}} X)     \]
and
\[ \mu_r: C^{\diamond}_\ast(\widehat{\mathbf{\Lambda}} X)\otimes _\tau
 C^{\diamond}_\ast(\widehat{\mathbf{\Omega}} X)\rightarrow   C^{\diamond}_\ast(\widehat{\mathbf{\Lambda}} X);   \]
also there is "an extended switch chain map"
\[ \mathcal {T}_r:   C^{\diamond}_\ast(\widehat{\mathbf{\Lambda}} X)   \otimes C^{\diamond}_\ast(\widehat{\mathbf{\Lambda}} X)\otimes _\tau
 C^{\diamond}_\ast(\widehat{\mathbf{\Omega}} X)\rightarrow
  C^{\diamond}_\ast(\widehat{\mathbf{\Lambda}} X)   \otimes C^{\diamond}_\ast(\widehat{\mathbf{\Lambda}} X)\otimes _\tau
 C^{\diamond}_\ast(\widehat{\mathbf{\Omega}} X).  \]
 Note that the above twisting tensor product of chain complexes and what follows is a particular case of a more general algebraic phenomenon involving the coHochschild chain complex of a dg coalgebra (see subsection \ref{twistedsec}).
Denoting $\mathcal{H}_\ast(Y):= H_\ast(  C^{\diamond}_\ast(\widehat{\mathbf{\Lambda}} X)\otimes _\tau
 C^{\diamond}_\ast(\widehat{\mathbf{\Omega}} X) )$ we get the following maps in homology
 \[ H_\ast(\Lambda Y)\xrightarrow{\nu_r}  \mathcal{H}_\ast(Y)  \xrightarrow{\mu_r} H_\ast(\Lambda Y) \]
and
\[     \mathcal{T}_r: H_\ast(\Lambda Y)\otimes \mathcal{H}_\ast(Y)\rightarrow  H_\ast(\Lambda Y)\otimes \mathcal{H}_\ast(Y). \]
Dually, we have the maps
 \[ H_\ast(\Lambda Y)\xrightarrow{\nu_l}  \mathcal{H}_\ast(Y) \xrightarrow{\mu_l} H_\ast(\Lambda Y) \]
and
\[     \mathcal{T}_l: \mathcal{H}_\ast(Y)\otimes H_\ast(\Lambda Y) \rightarrow  \mathcal{H}_\ast(Y)\otimes H_\ast(\Lambda Y). \]
The aforementioned relationship  between the string topology product and the
coproduct on $H_\ast(\Lambda M)$ is established by the following
\begin{theorem}\label{string}
The following  diagrams
\begin{equation}\label{bistring1}
\begin{array}{cccccc}

          H_\ast( \Lambda M)\otimes  H_\ast( \Lambda M)\otimes
           \mathcal{H}_\ast(M)\!\!\!\!
          &  \xrightarrow{\Cap\, \otimes\, \mu_r}  & \!\!
           H_\ast( \Lambda M)\otimes H_\ast( \Lambda M)
   \\
        _{  1\otimes \mathcal{T}_r }    \uparrow
        \vspace{2mm}\\
     \,\,\,\, H_\ast( \Lambda M)\otimes H_\ast(\Lambda  M) \otimes
           \mathcal{H}_\ast(M) & &
       \hspace{-0.15in}           _{\Delta}  \uparrow \vspace{1mm}\\

 \hspace{-0.3in}  _{\Delta\,\otimes \,\nu_r} \uparrow \\
    H_\ast( \Lambda M)\otimes H_\ast( \Lambda M)
    &  \xrightarrow{\Cap} &  H_\ast( \Lambda M)

   \end{array}
\end{equation}
and
\begin{equation}\label{bistring2}
\begin{array}{cccccc}

         \mathcal{H}_\ast(M)\otimes   H_\ast( \Lambda M)\otimes
          H_\ast( \Lambda M)
          \!\!\!\!
          &  \xrightarrow{ \mu_l\,\otimes\, \Cap }  & \!\! H_\ast( \Lambda M)\otimes H_\ast( \Lambda M)
   \\
        _{ \mathcal{T}_l \otimes 1}    \uparrow
        \vspace{2mm}\\
     \,\,\,\, \mathcal{H}_\ast(M) \otimes H_\ast( \Lambda M)\otimes H_\ast(\Lambda  M)
            & &
       \hspace{-0.15in}           _{\Delta}  \uparrow \vspace{1mm}\\

 \hspace{-0.3in}  _{\nu_l \,\otimes \,\Delta} \uparrow \\
    H_\ast( \Lambda M)\otimes H_\ast( \Lambda M)
    &  \xrightarrow{\Cap} &  H_\ast( \Lambda M)

   \end{array}
\end{equation}

are commutative.

\end{theorem}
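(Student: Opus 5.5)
The plan is to prove Theorem \ref{string} at the chain level, using the permutahedral models of the free loop fibration, and then pass to homology. Replace $M$ by $|X|$, where $X=K^{_\Box}$ is the special cubical set obtained from a triangulation $K$. We work in the chain complexes $C^{\diamond}_\ast(\widehat{\mathbf{\Lambda}} X)$ and $C^{\diamond}_\ast(\widehat{\mathbf{\Lambda}} X)\otimes_\tau C^{\diamond}_\ast(\widehat{\mathbf{\Omega}} X)$. On these we will have:
\begin{itemize}
\item the chain-level string product $\Cap$, obtained by lifting the cubical intersection pairing $\sqcap$ along $\zeta$ and then concatenating loops over the common base point;
\item the chain-level diagonal $\Delta$ on permutahedral chains;
\item the maps $\nu_r$, $\mu_r$ and $\mathcal{T}_r$.
\end{itemize}
It suffices to show that $\Delta\circ\Cap$ and $(\Cap\otimes\mu_r)\circ(1\otimes\mathcal{T}_r)\circ(\Delta\otimes\nu_r)$ agree up to chain homotopy, and in fact I would aim for equality on generators. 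The diagram (\ref{bistring2}) then follows by the symmetric (left) version of the same argument, exchanging the roles of the two tensor factors, or formally by applying the switch $T$ and using that $\Cap$ is graded commutative.

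The first step is to reduce to the base. On a generator $a\otimes b$ of $C^{\diamond}_\ast(\widehat{\mathbf{\Lambda}} X)^{\otimes 2}$, the product $\Cap(a\otimes b)$ is built from $\sqcap(\zeta a\otimes\zeta b)$ together with the fibre (loop) components of $a$ and $b$. The diagonal of a permutahedral cell splits into a diagonal on the base cube, which is the Serre cubical diagonal, and a diagonal on the permutahedral fibre factor. By the chain-level version of Theorem \ref{intersection}, the base part gives $\Delta\circ\sqcap=(\sqcap\otimes 1)(1\otimes T)(\Delta\otimes 1)$. I would establish this identity first, on cubes, as a lemma: it says exactly that the faces of the dual cells in $K^{_\Box}$ are compatible with the Serre diagonal.

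What remains is the fibre part. In the middle of $(1\otimes T)(\Delta\otimes 1)$, the second tensor factor of $\Delta a$ has to be moved past $b$. For based loops this is a plain switch. For free loops, however, the second half of the diagonal of $a$ is a path that ends at the base point of $a$, not of $b$. The map $\mathcal{T}_r$ is designed precisely to correct this, by transporting along an $\widehat{\mathbf{\Omega}}$-component that is produced by $\nu_r(b)$ and then absorbed by $\mu_r$. So the second step is to compute $(1\otimes\mathcal{T}_r)(\Delta a\otimes\nu_r b)$ explicitly on necklace-type generators. Using the twisting cochain $\tau$, the formula for $\nu_r$ splits $b$ into a free-loop part and a based-loop part, and the aim is to see that the based-loop part exactly rebuilds the missing path. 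After applying $\Cap\otimes\mu_r$ and comparing term by term, this should give $\Delta\Cap(a\otimes b)$.

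The main obstacle is this second step: the sign bookkeeping and the compatibility of $\mathcal{T}_r$ with the twisted differential. One must check that every term of the twisted differential in $\otimes_\tau$ hits a corresponding face term of $\Delta\circ\Cap$. If exact equality fails, I would first try to exhibit an explicit chain homotopy, using the contractible permutahedral fibre cells. Such a homotopy must also vanish on cycles whose base parts are transverse in the sense of $\sqcap$, so that it is compatible with the degree shift $-n$.
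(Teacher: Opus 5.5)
Your overall frame is the paper's: compare $f=\Delta_{\mathbf\Lambda}\circ\Cap$ with $g=(\Cap\otimes\mu_r)\circ(1\otimes\mathcal{T}_r)\circ(\Delta_{\mathbf\Lambda}\otimes\nu_r)$ on $C^{\diamond}_\ast(\widehat{\mathbf{\Lambda}}X)\otimes C^{\diamond}_\ast(\widehat{\mathbf{\Lambda}}X)$ for $X=K^{_\Box}$, then pass to homology. But the route you propose for the decisive step would not go through. It rests on the claim that the permutahedral diagonal splits into the Serre diagonal on the base cube and a diagonal on the loop factor. You then expect Proposition \ref{serrediagonal}(i) to dispose of the base, leaving only a fibre computation.

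For $\Delta_{\mathbf\Lambda}$ this claim is false. The diagonal of the base factor $P_{n_0+1}$ does not factor through the cube, and the coproduct on $u]\varpi$ is the twisted formula (\ref{loopco}). There the Serre components $u',u''$ are cut further by the cooperations $E^{p,q}$ and $E^{s,t}$. The resulting faces $a_1,\dots,a_{p-1}$, $b_j$, $c_i$, $d_2,\dots,d_t$ of $u$ are moved into the loop words of both tensor factors (see Example \ref{coproduct}). Only the terms with $(p,q)=(1,0)$ and $(s,t)=(0,1)$ give the untwisted part $u']\varpi'\otimes u'']\varpi''$. The twisting disappears for a trivial hat-Hirsch coalgebra (Example \ref{strictexampl1}), which $C_\ast(X)$ is not in general. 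So after Proposition \ref{serrediagonal}(i) you are still left with all the $E^{p,q}$-terms, and nothing forces them to match the terms of $g$ one by one. The paper never obtains equality on generators.

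The missing idea is the acyclic-carrier argument, which makes explicit bookkeeping with $\mathcal{T}_r$ and signs unnecessary. On a generator $u]a\otimes v]b$ the paper checks $f=g$ only in the bottom case $|u|+|v|=n$, where $u\sqcap v$ is zero-dimensional. For $|u|+|v|>n$ it only observes that $f(u]a\otimes v]b)$ and $g(u]a\otimes v]b)$ lie in one and the same acyclic subcomplex determined by the generator. The standard acyclic-carrier induction, started with the zero homotopy in the bottom case, then gives $f\simeq g$, i.e.\ (\ref{cbistring1}). Your fallback (a homotopy from contractible permutahedral cells, vanishing on transverse pairs) points in this direction. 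However, it does not name the bottom case or the common carrier, and these are what make the argument work.

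Finally, (\ref{bistring2}) does not follow formally from (\ref{bistring1}) via the switch and graded commutativity of $\Cap$. Commutativity only permutes the inputs, whereas (\ref{bistring2}) has $\Cap$ in the other output factor and uses a different complex and the maps $\nu_l,\mu_l,\mathcal{T}_l$. The paper proves the mirror homotopy (\ref{cbistring2}) directly, and for $\Cap^{op}$ on $\widehat{\mathbf{\Lambda}}X^{op}$ rather than for $\Cap$. The reason is that the cubical identity of Proposition \ref{serrediagonal}(ii) is an equality for $\sqcap^{op}$ but only a homotopy for $\sqcap$. It then transfers the result to $\Cap$ on homology, as in the proof of Proposition \ref{Capcommut}.
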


Recall that (\cite{Whitehead}) the $\sqcap$ -- product given by
(\ref{stcap}) is induced by means of the pairing of cellular chain
complexes
\begin{equation}\label{chainclassical}
\widetilde{\sqcap}:C_p(K)\otimes C_q(K^\divideontimes)\rightarrow
C_{p+q-n}(K^{\prime}),
\end{equation}
where $K$ is a simplicial subdivision of $M,$ $K^{\prime}$ is the
barycentric subdivision of $K$ and
$K^\divideontimes= \underset{\sigma\in K}\bigcup D(\sigma)$ is a block dissection of $K^\prime$ by the
barycentric stars
  \[D(\sigma)=\bigcup_ {\sigma_i\in K}\sigma_k\supset \cdots   \supset\sigma_1\supset \sigma\] of simplices $\sigma\in K.$
   More precisely, (\ref{chainclassical})
is defined as the composition
\begin{multline*} C_p(K)\otimes C_q(K^\divideontimes)
\xrightarrow{1\otimes \phi}  C_p(K)\otimes C^{n-q}(K)
\xrightarrow{Sd\,\otimes\, \theta^\ast} \\ C_p(K^{\prime})\otimes
C^{n-q}(K^{\prime})
 \xrightarrow{\frown} C_{p+q-n}(K')
\end{multline*}
where  $\phi: C_q(K^{\divideontimes})\rightarrow C^{n-q}(K)$ is  the
Poincar\'{e} chain isomorphism, $Sd$ is the subdivision operator (see
Figure 1), $\theta: K^\prime \rightarrow K$ is a simplicial
displacement and
the last map is the chain  cap -- product.
The problem of constructing  the chain-level intersection pairing  gave
rise
a number of works. A good reference to the subject is the recent book
\cite{Friedman}  (see also \cite{R-T}).
\begin{remark}
1.  Note that if we try to shorten (\ref{chainclassical}) by immediately  applying
  the $\frown$ -- product instead of $ Sd\otimes \theta^\ast$
  to have the value in $C_{p+q-n}(K),$ then the obtained product would
  not satisfy the Leibnitz rule.

  2. The idea to evoke the cubical set $K^{_\Box}$ here arose  by the fact that in (\ref{chainclassical})
  the union of supports of elementary chains in $\sigma \,\widetilde{\sqcap}\, D(\tau)$
  for any two simplices  $\sigma$ and $\tau$ forms a cube in $K^{_\Box}.$
\end{remark}

The  definition of $K^{_\Box}$ is as follows.
Let $K$ denote the triangulated complex of $M.$
For each pair $\sigma\supset \tau$ of simplices from $K$ we assign the
cubical cell $I(\sigma\supset \tau)$ of dimension $|\sigma|-|\tau|:$
Namely, if
$ (\sigma_{k}\supset  \cdots \supset \sigma_{1})\in K^{\prime},$
 $ (\sigma_{k}\supset  \cdots \supset \sigma_{1})\subset \sigma_k,  $
 denotes a barycentric subdivision simplex with the vertices $\sigma_i$
 being subsimplices  of $\sigma_k,$
then
\[  I(\sigma\supset \tau)= \bigcup_{\sigma\supset \sigma_{i}\supset
\tau}
(\sigma\supset \sigma_{r}\supset  \cdots \supset \sigma_{1}\supset\tau)
\subset \sigma \]
with two extreme vertices $ I(\tau\supset \tau)=\min  I(\sigma\supset
\tau)$ and $ I(\sigma\supset \sigma)=\max I(\sigma\supset \tau).$
In particular, we obtain a cubical subdivision of
 the geometric realization of every simplex  $\sigma\in K$   (see Figure 1),
 \[\sigma=I(\sigma \supset v_0)\cup\cdots \cup I(\sigma\supset v_m),\ \ \ \sigma=(v_0,...,v_m)\in K . \]
The
  cubical cellular structure of $M$
formed by the cubes $I(\sigma\supset \tau)$ for all pair of simplices
$\sigma\supset \tau$ is just denoted   by  $ K^{_\Box}.$
Introduce
the cubical face operators $d^\epsilon_i$ on $ K^{_\Box}$ as follows.
Given a pair $\tau \subset \sigma$  of simplices with  $\sigma =(v_0,...,v_m),$
let $\sigma=\tau\cup (v_{q_1},...,v_{q_r}),\, 0\leq q_i\leq m.$
Then
\[d^0_{i}I(\sigma\supset \tau)= I((\sigma\setminus v_{q_i})\supset
\tau)\ \
\text{and} \ \
d^1_{i}I(\sigma\supset \tau)= I(\sigma\supset (\tau  \cup v_{q_i})).\]
The degenerate  operators $\eta_i$   are added formally.  The obtained cubical
set   is denoted by
 \[
 X =\{X_m, d^\epsilon_i, \eta_i \}_{0\leq m\leq n} : =\{K^{\Box}_m, d^\epsilon_i, \eta_i \}_{0\leq m\leq n}.\]
Furthermore, 
let   $P(m)$  denote the set of partitions of the set
$\underline{m}:=\{1,2,...,m\},$ and let 
\[
\overline{P}(m):= \overline{P}'(m)\cup  \overline{P}''(m)\ \ \text{for}\ \      \overline{P}'(m)= P(m) \cup \varnothing\,|\,\underline{m}\ \, \text{and}\ \,
\overline{P}''(m)= P(m)\cup \underline{m}\,|\,\varnothing .
\]
 For $A|B= (i_1,...,i_p)\mid  (j_1,...,j_q)    \in  \overline{P}(m),$
  the corresponding "Cartesian decomposition" of a cube $u\in K^{\Box}_m$ is
\[
u_B\times u_A:=  d^0_B(u)\times d^1_A(u)  =d^0_{j_1}\circ\cdots \circ
d^0_{j_{q}}(u)\times   d^1_{i_1}\circ \cdots\circ d^1_{i_{p}}(u)\]
with
 \[
d^0_{\underline{m}}(u)\times d^1_{\varnothing}(u)=\min u\times u\ \  \
\text{and}\ \ \  d^0_{\varnothing}(u)\times d^1_{\underline{m}}(u)=
u\times \max u.
  \]
Given $u\in K^{\Box}_m,$ denote \[
 \partial^\epsilon u:= \underset{1\leq i\leq m}{\bigcup}d^\epsilon _i u
 \ \   \text{for} \ \  \epsilon=0,1,\ \
\text{and}  \ \
\partial u:=\partial^0 u\cup \partial^1 u, \  \text{and} \ \bar\partial
u:=u \cup \partial u.   \]
 When $w$ is  a face of $v\in X,$ write $w\subset v.$ For each   $w\in X$   with  $w=d^0_{B_e}(e),\,e\in X_n,$ denote
 \[
    D_w:= \bigcup_{e\in X_n }  d^1_{A_e}(e). \]
For each $w\in X $   fix one element  $ d^1_{A_e}(e)  \in D_w,$  denoted by $e_w,$
 such that
   if $w\subset w'\subset e,$ then $e_{w'}\subset e_w;$
  In particular, for a vertex $w\in K^{\Box}_0$ we have $e_w=e,$ and then
   all $d^1_{A_e}(e)$ faces of $e$ as $e_{w'}$'s  are chosen.
Note also that  $|e_w|=n-|w|.$

Now define
 the cellular maps
\[    \theta^{_\Box}: K^{_\Box}\rightarrow  K
\ \ \text{and}\ \
\Theta^{\divideontimes }: K^{_\Box} \rightarrow K^\divideontimes  \]
 as follows.
Recall  the
simplicial displacement  $\theta: K^{\prime} \rightarrow K$  given
 for a vertex
 $\sigma\in K^{\prime}$   by $\theta(\sigma)=\max \sigma;$ then set
 $\theta^{_\Box}(I(\sigma\supset \min \sigma))=\sigma$ with
  $\theta^{_\Box}(I(\sigma\supset \sigma_{(i)}))= _{(i)}\!\!\sigma
  $ for
   $ _{(i)}\sigma:= \partial_{0}\cdots \partial_{i-1}(\sigma)$
   and
  $\sigma_{(i)}:=\partial_{i+1} \cdots \partial_{n}(\sigma).$

  To define $\Theta^\divideontimes$ first observe that
   the union $K^\boxplus:= \underset{w\in K^{\Box}}\bigcup D_w$ is a block dissection of $K^\prime$  different from $K^\divideontimes,$ but
  the set-theoretical identity map
  $id: K'\rightarrow K'$  can be viewed as a cellular map
   $\iota^\divideontimes: K^{\boxplus}\rightarrow K^\divideontimes$
   with the cellular homeomorphisms $\iota^\divideontimes|_{D_w}: D_w\rightarrow D(\sigma)$ for $w$ to be a subdivision cube of a simplex $\sigma$ (see Figure 1).

  Secondly, define a "cubical
   displacement"   $\Theta^{_\Box}:K^{_\Box}\rightarrow K^{\boxplus}$ being a cellular surjection as follows.
   For each $\omega\in K^{_\Box}$ consider the cell $e_w\in D_w.$
Define
\[\Theta^{_\Box}(e_w)=D_w,\ \
\text{with}\ \
\Theta^{_\Box}(\partial e_w)=\partial  D_w
\]
 such that $\Theta^{_\Box}(d^0_ie_w)$ is degenerate for all $ i$ unless
 $i=1 $ in which case
\[ \Theta^{_\Box}(d^0_1e_w)= \bigcup_{w\subset w'\nsubseteq e} D_{w'},
                               \ \ \ \text{while}\ \ \
       \Theta^{_\Box}(\partial^1e_w)=
       \bigcup_{ w\subset w'\subset e}
                 D_{w'}.
                               \]
Then $\Theta^{\divideontimes }=  \iota^{\divideontimes }\circ
  \Theta^{_\Box}. $
Let
 \[\theta^{_\Box}: C_\ast(K^{_\Box})\rightarrow C_\ast( K)
\ \ \text{and}\ \
\Theta^{\divideontimes}: C_{\ast}(K^{_\Box}) \rightarrow
C_\ast(K^\divideontimes)
  \]
be the induced chain maps respectively. Let the cellular map
\[Sd'_{_\Box}:   K^{_\Box}\rightarrow  K'  \]
 be resolved from the composition
$Sd=Sd'_{_\Box}\circ Sd_{_\Box}$  (see Figure 1),
and \[\theta^{\vartriangle}: K'\rightarrow K^{_\Box} \]  be a cellular map with    $   \theta^{\vartriangle}(\sigma')\subseteq  I(\sigma\supset \tau)$ for all $\sigma'= (\sigma\supset \sigma_{r}\supset  \cdots
\supset \sigma_{1}\supset\tau)$
    such that on the chain level
\[\theta^{\vartriangle} \circ Sd'_{_\Box}=id. \]

Define  the $\sqcap$ -- product
\[   \sqcap:  C_p(K^{_\Box})  \otimes  C_q(K^{_\Box})\rightarrow C_{p+q-n}(K^{_\Box})   \]
 as the composition
\[
  C_p(K^{_\Box})\otimes C_{q}(K^{_\Box}) \xrightarrow
 {\theta^{\Box}\,\otimes\, \Theta^{\divideontimes}}
   C_p(K)\otimes C_{q}(K^\divideontimes)
   \xrightarrow {\widetilde{\sqcap}}
   C_{p+q-n}(K^\prime)\xrightarrow{\theta^{\vartriangle}}
    C_{p+q-n}(K^{_\Box}).
 \]
In particular,
 the following diagram
\begin{equation}\label{comparison}
\begin{array}{cccccc}
  C_p(K)\otimes C_{q}(K^\divideontimes)  &
  \xrightarrow{\widetilde{\sqcap}}                 &
      C_{p+q-n}(K') \\
       \hspace{-0.5in}  _{\theta^{_\Box}\otimes\,
       \Theta^{\divideontimes}} \uparrow            &
       &
       \hspace{-0.3in}
         _{Sd'_{_\Box}} \uparrow \vspace{1mm}\\
 \!\!\! C_p(K^{_\Box})\otimes C_{q}(K^{_\Box}) &  \xrightarrow{\sqcap}
 &  C_{p+q-n}(K^{_\Box})
     \end{array}
\end{equation}
commutes. By examining the cellular map $\Theta^{\divideontimes}$
we easily deduce that
in terms of
  elementary chain  cubes
$u,v\in C_\ast(X) $
  \[
  u\sqcap v=\left\{
      \begin{array}{lllll}
      d^1_{A_u}(u) ,   &  v=e_w, & w=d^0_{B_u} (u),\ \ \text{some}\ \ B_u,
         \vspace{1mm}\\
           0      ,&    v=e_w    ,&   w\neq d^0_{B_u}( u), \ \ \text{neither}\ \ B_u, \vspace{1mm}     \\
-d^1(u\sqcap e_w)+ u\sqcap d^1(e_w),  &
                  v=d^0_{1}(e_w).

 \end{array}
 \right.
      \]
Also note that
      we  may have non-zero product $u\sqcap d^0_i(v)$  for $u$  and $v$
      being some cells in $D_w,$ but in each such case
  $u\sqcap v=  du\sqcap v=u\sqcap dv =0.$

We have the following
\begin{proposition}\label{algebra} The product $\sqcap : C_p(X)\otimes
C_{q}(X) \rightarrow  C_{p+q-n}(X)$  for $u \otimes v\in C_p(X)\otimes
C_{q}(X)$
satisfies the equality
\[  d(u \sqcap  v)= (-1)^{n+q}\, d(u)\sqcap  v +   u\sqcap d(v).     \]
  \end{proposition}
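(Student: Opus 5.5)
The plan is to deduce the Leibniz rule from the factorization of $\sqcap$ as the composite
\[
C_p(K^{_\Box})\otimes C_q(K^{_\Box})\xrightarrow{\theta^{\Box}\otimes\Theta^{\divideontimes}} C_p(K)\otimes C_q(K^\divideontimes)\xrightarrow{\widetilde{\sqcap}} C_{p+q-n}(K')\xrightarrow{\theta^{\vartriangle}} C_{p+q-n}(K^{_\Box}),
\]
rather than from the case formula for $u\sqcap v$ directly. It is enough to show that each factor satisfies the appropriate graded Leibniz rule, and then compose the signs.

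First, $\theta^{\Box}$, $\Theta^{\divideontimes}$ and $\theta^{\vartriangle}$ are induced by cellular maps. Hence they are chain maps, and their tensor product is a chain map with the Koszul sign. The problem therefore reduces to the classical pairing $\widetilde{\sqcap}=\frown\circ(Sd\otimes\theta^\ast)\circ(1\otimes\phi)$.

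I would treat its factors in order.

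\begin{itemize}
\item $Sd$ is a chain map, and $\theta^\ast$ is the cochain map dual to the simplicial map $\theta$.
\item The Poincar\'e isomorphism $\phi:C_q(K^\divideontimes)\to C^{n-q}(K)$ sends the dual block $D(\sigma)$ to the dual cochain $\sigma^\ast$. It commutes with differentials up to a sign $(-1)^{q}$ or $(-1)^{n-q}$, depending on the orientation conventions for the blocks. This is the standard incidence-number check $[D(\sigma):D(\tau)]=\pm[\tau:\sigma]$, which uses the orientation of $M$.
\item The chain-level cap product obeys $d(c\frown\xi)=(-1)^{|\xi|}(dc\frown\xi - c\frown\delta\xi)$, or its variant in the chosen convention.
\end{itemize}

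Combining these with $|\xi|=n-q$ gives
\[
d(x\,\widetilde{\sqcap}\,y)=(-1)^{n+q}\,dx\,\widetilde{\sqcap}\, y + x\,\widetilde{\sqcap}\, dy,
\]
after the two signs from $\phi$ and from $\frown$ are tracked. Pulling back along $\theta^\Box\otimes\Theta^\divideontimes$ and pushing forward by $\theta^\vartriangle$ then yields the statement of Proposition \ref{algebra}.

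As a consistency check, I would verify the formula directly on elementary cubes using the explicit description of $u\sqcap v$. For $v=e_w$ with $w=d^0_{B_u}(u)$ we have $u\sqcap e_w=d^1_{A_u}u$. Its boundary should be matched against the terms $du\sqcap e_w$, in which only the faces $d^\epsilon_i u$ still containing $w$ as a $d^0$-face contribute. It should also be matched against $u\sqcap de_w$, whose $d^0_1$-component is defined precisely by the third line of the case formula. That third line is essentially the Leibniz rule imposed by definition on $d^0_1 e_w$. The remark that the extra products $u\sqcap d^0_i v$ occur only when $u\sqcap v=du\sqcap v=u\sqcap dv=0$ handles the degenerate leftover terms.

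The main obstacle is the middle step. The issue is not whether $\theta^\vartriangle$ is a chain map, since it is cellular. Rather, one must be careful that $\Theta^\divideontimes$ really is a chain map: it collapses the faces $d^0_i e_w$, $i\neq 1$, to degenerate cells, and $d^0_1e_w$ and $\partial^1e_w$ cover unions of blocks. One must check that these images fit together as $\partial D_w$ with correct orientations. Along with this goes the global sign bookkeeping that produces exactly $(-1)^{n+q}$. I would fix orientations of $e_w$ compatible with $D_w$ and compute the sign on a top cell $e\in X_n$ paired with a vertex, then propagate by the face relations.
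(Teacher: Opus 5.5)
Your route is genuinely different from the paper's, and it is sound in outline. The paper never passes through $\widetilde{\sqcap}$. It works directly with the explicit case formula, splitting $d=d^0-d^1$ and treating $v=e_w$ and $v=d^0_1(e_w)$ separately:
\begin{itemize}
\item For $v=e_w$, the $d^0$-part follows from the cubical relation $d^0d^1=-d^1d^0$. The $d^1$-part comes from the defining third line of the formula, together with the vanishings $d^1(u)\sqcap e_w=0$ (as $w\nsubseteq\partial^1u$) and $u\sqcap d^0_i(e_w)=0$ for $i>1$.
\item For $v=d^0_1(e_w)$, one substitutes the defining formula and reduces to the first case.
\end{itemize}
In other words, what you relegate to a consistency check is the paper's entire proof.

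Your factorization argument buys a conceptual explanation of the sign: $(-1)^{n+q}$ is the cap-product sign for a cochain of degree $n-q$. It also avoids the case analysis. But it leans on Poincar\'e duality for an oriented closed manifold, and on the chain-map property of $\theta^{\Box}$, $\Theta^{\divideontimes}$, $\theta^{\vartriangle}$, which the paper only asserts.

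The obstacle you single out is not the real one: a genuine cellular map automatically induces a chain map on cellular chains. What does need care is the signs. The Leibniz sign pattern is not invariant under degree-dependent rescalings of $\sqcap$. So the orientations of the blocks and the conventions for $\phi$ and $\frown$ must be fixed compatibly with $d=d^0-d^1$ on $C_\ast(X)$ and with the normalization $u\sqcap e_w=d^1_{A_u}(u)$. You assert the outcome $(-1)^{n+q}$ rather than derive it.

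The paper's intrinsic argument, by contrast, uses only the cubical combinatorics and the choice of the cells $e_w$, so it applies without any appeal to duality. It is also exactly the template reused for Proposition~\ref{algebra2}. There, $\Cap$ on $C^{\diamond}_\ast(\widehat{\mathbf{\Lambda}}X)$ is defined only by (\ref{capformula}), and there is no factorization through a classical pairing for your method to exploit.
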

Thus,  we obtain the induced $\sqcap$ -- product on
    the homology $H_\ast(X)$, and
\begin{proposition}\label{commut}
The  product
     \[\sqcap  :H_p( X)\otimes H_q( X)\rightarrow H_{p+q-n}( X)\]
    is commutative and associative.
  \end{proposition}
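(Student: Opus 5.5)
The plan is to compare the cubical product with the classical intersection product through diagram (\ref{comparison}). Commutativity and associativity of the classical product are known, and the task is to transport them along maps that are isomorphisms in homology. The first step is to check that each map in (\ref{comparison}) induces an isomorphism in homology.
\begin{itemize}
\item $\theta^{_\Box}: C_\ast(K^{_\Box})\to C_\ast(K)$ and $\Theta^{\divideontimes}: C_\ast(K^{_\Box})\to C_\ast(K^\divideontimes)$ are cellular maps that are homotopic to the identity of $M$. Indeed $\theta^{_\Box}$ refines the simplicial displacement $\theta$, and $\Theta^{\divideontimes}=\iota^\divideontimes\circ\Theta^{_\Box}$ is a cellular surjection built from cell-wise collapses.
\item $Sd'_{_\Box}$ is a subdivision map, so it is a quasi-isomorphism with quasi-inverse $\theta^{\vartriangle}$; we have $\theta^{\vartriangle}\circ Sd'_{_\Box}=id$ on chains.
\end{itemize}
By Proposition \ref{algebra}, $\sqcap$ descends to homology. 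Since (\ref{comparison}) commutes, on homology we get
\[
Sd'_{_\Box\ast}(x\sqcap y)=\theta^{_\Box}_\ast(x)\,\widetilde{\sqcap}\,\Theta^{\divideontimes}_\ast(y).
\]
The right-hand side is the classical intersection product $\cdot$ of \cite{Whitehead}, computed under the identifications $H_\ast(K)\cong H_\ast(K^\divideontimes)\cong H_\ast(K')\cong H_\ast(M)$. Hence $\sqcap$ on $H_\ast(X)$ corresponds to the classical product under a single isomorphism $H_\ast(X)\cong H_\ast(M)$.

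The second step is to invoke the standard properties of the classical product. It is dual, under Poincaré duality, to the cup product on $H^\ast(M)$, so it is associative and graded commutative:
\[
x\cdot y=(-1)^{(n-p)(n-q)}\,y\cdot x .
\]
Transporting along the isomorphism yields both properties for $\sqcap$. The sign conventions must be matched with the twisted Leibniz rule of Proposition \ref{algebra}, which reflects the degree shift by $n$.

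The third step handles the main obstacle. The identification in the first step uses different models in the two slots ($K$ versus $K^\divideontimes$), so one must check that the composite isomorphisms $H_\ast(X)\to H_\ast(M)$ in the two slots and in the target coincide. For this it suffices that $\theta^{_\Box}$, $\Theta^{\divideontimes}$ and $Sd'_{_\Box}$ all realize the identity of $M$ up to homotopy. I would verify this cell by cell: every map sends a cube $I(\sigma\supset\tau)$ into the closed star of $\sigma$, so straight-line homotopies inside the simplices of $K$ work.

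For associativity a second route is available if this identification is too delicate. One can compute $(u\sqcap v)\sqcap w$ and $u\sqcap(v\sqcap w)$ directly on generators from the explicit formula, with $v=e_w$ ranging over the dual cells. Both sides reduce to iterated faces $d^1_{A}d^0_{B}$ of a top cube, and they agree up to boundaries. I would keep this only as a fallback.
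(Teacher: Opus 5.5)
Your route differs from the paper's. The paper argues intrinsically. It passes to the opposite cubical set $X^{op}$, with $d^0_i$ and $d^1_i$ interchanged, and builds the mirrored product $\sqcap^{op}$ from $\Theta^{\divideontimes}\otimes\theta^{_\Box}$ and $\widetilde{\sqcap}^{\,op}$. It then uses three facts: the chain-level identity $(u\sqcap v)^{op}=(-1)^{pq}\,v^{op}\sqcap^{op}u^{op}$, the fact that $u\mapsto u^{op}$ is a homology isomorphism, and the assertion that $\sqcap$ and $\sqcap^{op}$ induce the same homology product. Associativity is declared straightforward.

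You instead prove the comparison with the classical product first. This is essentially Proposition \ref{classical}, which the paper states afterwards. You then import both properties from Poincar\'e duality.
\begin{itemize}
\item This is not circular, since diagram (\ref{comparison}) does not use commutativity.
\item It gives commutativity and associativity together, whereas the paper leaves associativity unproved.
\item Your worry about matching the Leibniz sign of Proposition \ref{algebra} is unnecessary. Once $\sqcap$ and the classical product correspond under one isomorphism, the commutation sign is simply the classical one.
\item The paper's unproved claim that $\sqcap$ and $\sqcap^{op}$ agree in homology would most naturally be checked by the same kind of comparison with $\widetilde{\sqcap}$, so the routes are closer than they look.
\end{itemize}
What the paper's mirror argument buys is that it stays inside the cubical combinatorics, as the paper intends. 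It is also the template reused for the loop product in Proposition \ref{Capcommut}. There no classical comparison is available, and your transport argument does not extend.

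The step you correctly identify as the crux is, however, justified incorrectly for $\Theta^{\divideontimes}$. This map is not built from cell-wise collapses. Let $\tau$ be the top simplex of $w$ and $\rho_w$ the chosen top simplex containing it. Then $\Theta^{_\Box}$ stretches the single cube $e_w=I(\rho_w\supset\tau)$ over the whole block $D_w=D(\tau)=\bigcup_{\rho\supseteq\tau}I(\rho\supset\tau)$ and collapses the other cubes of that block.

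Already on a triangulated circle this sends a vertex $a$ to the midpoint of the other edge at $a$. It also sends interior points of one edge into the interior of the adjacent edge. So the image of $I(\sigma\supset\tau)$ is not in the closed star of $\sigma$; for a top simplex that star is $\sigma$ itself. A point and its image usually lie in no common simplex of $K$, so straight-line homotopies inside simplices of $K$ are not available.

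What works is an acyclic carrier argument with carrier $I(\sigma\supset\tau)\mapsto\overline{\mathrm{st}}_K(\tau)$, the closed star of the \emph{inner} simplex $\tau$.
\begin{itemize}
\item It is contractible and contains both $I(\sigma\supset\tau)$ and $D(\tau)$.
\item It is monotone on faces: a face $I(\sigma'\supset\tau')$ has $\tau'\supseteq\tau$, hence $\overline{\mathrm{st}}_K(\tau')\subseteq\overline{\mathrm{st}}_K(\tau)$.
\item The compatibility condition on the choices $e_w$ forces the $d^1$-faces of $e_w$ to be again of the form $e_{w'}$, so faces are carried too.
\end{itemize}
The acyclic carrier theorem then makes the subdivided $\Theta^{\divideontimes}_\#$ and $Sd'_{_\Box}$, both as maps into $C_\ast(K')$, chain homotopic. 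Hence $\Theta^{\divideontimes}_\ast$ is the canonical identification.

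Your argument for $\theta^{_\Box}$ and $Sd'_{_\Box}$ is fine as written. With this repair your proof goes through for $X=K^{_\Box}$ with $M$ closed and oriented, which is the setting in which $\sqcap$ is defined.
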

Consequently,
\begin{proposition}\label{classical} Let $M$ be an oriented closed
 $n$-manifold. The  $\sqcap$ -- product  on $H_\ast(M)$
 agrees with the classical intersection product being commutative and
 associative.
 \end{proposition}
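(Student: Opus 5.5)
The plan is to compare the cubical $\sqcap$ -- product with the classical chain-level pairing $\widetilde{\sqcap}$ of (\ref{chainclassical}) via the commutative diagram (\ref{comparison}), and then pass to homology.

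First, all comparison maps in (\ref{comparison}) are quasi-isomorphisms compatible with the identifications of each homology with $H_\ast(M)$. $K^{_\Box}$, $K$, $K'$ and $K^\divideontimes$ are cellular decompositions of the same space $M$. The maps $\theta^{_\Box}$, $\Theta^{\divideontimes}$ and $Sd'_{_\Box}$ are cellular maps homotopic to the identity of $M$:
\begin{itemize}
\item $\theta^{_\Box}$ is a displacement sending each subdivision cube of a simplex into that simplex, so it is homotopic to the identity by the straight-line homotopy inside each simplex;
\item $\Theta^{\divideontimes}=\iota^\divideontimes\circ\Theta^{_\Box}$ is a cellular surjection collapsing cells within the blocks $D_w$, hence again a displacement within stars;
\item $Sd'_{_\Box}$ is the subdivision map.
\end{itemize}
So on homology each of these maps induces the identity of $H_\ast(M)$ under the cellular homology isomorphisms. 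The same holds for $\theta^{\vartriangle}$, since $\theta^{\vartriangle}\circ Sd'_{_\Box}=id$.

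Second, I use the classical fact \cite{Whitehead} that $\widetilde{\sqcap}$ induces the intersection product on $H_\ast(M)$. The point is that $\phi$ is the Poincar\'e duality chain isomorphism, and cap product with the fundamental class realizes intersection. Chasing (\ref{comparison}) in homology, with $x,y\in H_\ast(K^{_\Box})=H_\ast(M)$, gives
\[
Sd'_{_\Box\ast}(x\sqcap y)=\widetilde{\sqcap}(\theta^{_\Box}_\ast x\otimes\Theta^{\divideontimes}_\ast y),
\]
which under the identifications is exactly the classical intersection of $x$ and $y$.

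Third, once the products agree, commutativity and associativity follow from Proposition \ref{commut}; they are also classical properties of the intersection product. Proposition \ref{algebra} guarantees that $\sqcap$ is well-defined on homology. Signs must be matched: the sign convention $(-1)^{n+q}$ in Proposition \ref{algebra} should correspond to the standard convention for the intersection product, possibly up to a global sign depending on degrees. Any such discrepancy would be absorbed into the orientation convention.

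The main obstacle is the first step for $\Theta^{\divideontimes}$. It is only described via its values on the cells $e_w$ and their faces, so one must check that it is a genuine cellular map homotopic to the identity; the block dissection $K^\boxplus$ differs from $K^\divideontimes$, and $\iota^\divideontimes$ is a cellular homeomorphism on each block. I would first try to exhibit an explicit homotopy that contracts each $e_w$ onto the block $D_w$ through the barycentric star. Failing that, I would verify directly that $\Theta^{\divideontimes}$ sends the fundamental cycle $\sum_{e\in X_n}e$ to the fundamental cycle of $K^\divideontimes$. That suffices, because it makes $\Theta^{\divideontimes}$ degree one on top homology, and the remaining degrees follow by naturality of Poincar\'e duality applied to the cellular chain map.
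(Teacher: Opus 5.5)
Your route is the one the paper intends. The paper gives no separate argument for Proposition \ref{classical}; it reads it off from the definition $\sqcap=\theta^{\vartriangle}\circ\widetilde{\sqcap}\circ(\theta^{_\Box}\otimes\Theta^{\divideontimes})$, the diagram (\ref{comparison}), Whitehead's identification of $\widetilde{\sqcap}$ with the intersection pairing, and Proposition \ref{commut}. It takes for granted that $\theta^{_\Box}$, $\Theta^{\divideontimes}$ and $Sd'_{_\Box}$ induce the identity of $H_\ast(M)$, which is the point you rightly single out as the main obstacle.

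The step that would fail is your fallback for that obstacle. Knowing that $\Theta^{\divideontimes}$ carries $\sum_{e\in X_n}e$ to the fundamental cycle only says that $\Theta^{\divideontimes}_\ast$ is the identity on $H_n(M)$. A degree-one self-map need not be the identity in lower degrees: the linear map of $T^2=\mathbb{R}^2/\mathbb{Z}^2$ with matrix $\left(\begin{smallmatrix}1&1\\0&1\end{smallmatrix}\right)$ has degree one and acts non-trivially on $H_1$. For a degree-one map $f$, the projection formula $f_\ast(f^\ast\alpha\frown[M])=\alpha\frown[M]$ only shows that $f_\ast$ is split surjective. For a chain map not induced by a continuous map there is no such naturality at all. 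This matters: if $\Theta^{\divideontimes}_\ast=\psi\neq id$, your chase yields $x\sqcap y=x\cdot\psi(y)$, which is not the intersection product.

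What you need instead is an acyclic-carrier argument, which is the precise form of your ``displacement within blocks'' idea.
\begin{itemize}
\item Assign to a cube $I(\sigma\supset\tau)$ the closed dual block $\overline{D(\tau)}\subset K'$. It is a cone with apex the barycenter of $\tau$, hence acyclic, and it contains the cube.
\item It contains the carriers of the faces. $d^0_i$ keeps the bottom simplex $\tau$, and $d^1_i$ gives $I(\sigma\supset\tau\cup v)$ with $\overline{D(\tau\cup v)}\subset\overline{D(\tau)}$.
\item Both $Sd'_{_\Box}$ and the composite of $\Theta^{\divideontimes}$ with the subdivision $C_\ast(K^{\divideontimes})\to C_\ast(K')$ should be carried by it, so they are chain homotopic. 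For the latter composite, that each cube lands in its block is exactly the property to check from the construction of $\Theta^{_\Box}$, which sends $e_w$ onto $D_w$ and $\partial e_w$ onto $\partial D_w$.
\item The same argument handles $\theta^{_\Box}$, using the subdivided closed simplex $\sigma$ as carrier and the composite $Sd\circ\theta^{_\Box}$.
\end{itemize}

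Finally, your sign discussion is unnecessary. Since $\sqcap$ factors through $\widetilde{\sqcap}$, it agrees in homology with Whitehead's convention on the nose. Its last sentence is also wrong as stated: reversing the orientation changes the intersection product by a single global sign, so a degree-dependent discrepancy could not be absorbed that way.
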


Furthermore, on the chain level we have
\begin{proposition}\label{serrediagonal} (i) The   product $\sqcap:
C_*(X) \otimes C_*(X) \to C_{*-n}(X)$ satisfies the equation
\begin{equation}\label{left}
  (\sqcap \otimes 1) \circ (1 \otimes T)\circ(\Delta_{_\Box}
 \otimes 1)
  = \Delta_{_\Box} \circ \sqcap
   \end{equation}
  with respect to  the cubical diagonal
$\Delta_{_\Box}: C_*(X) \to C_*(X) \otimes C_*(X);$

(ii)
There is  a chain homotopy
 \[(  1\otimes \sqcap) \circ (T \otimes 1)\circ(1\otimes
   \Delta_{_\Box})\simeq
   \Delta_{_\Box} \circ \sqcap.
   \]
  \end{proposition}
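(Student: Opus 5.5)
We argue on elementary cubes, using the Serre cubical diagonal
\[
\Delta_{_\Box}(u)=\sum_{A|B\in \overline{P}(m)}\pm\, d^0_B(u)\otimes d^1_A(u),\qquad u\in X_m ,
\]
together with the explicit formula for $u\sqcap v$ on elementary cubes listed just before Proposition \ref{algebra}. Since both sides of (\ref{left}) are linear, it suffices to evaluate them on $u\otimes v$ with $u,v$ elementary. Moreover, the formula for $u\sqcap d^0_1(e_w)$ is forced by the Leibniz rule of Proposition \ref{algebra}. So we may restrict to $v=e_w$, $w$ a face of some top cube $e$. The remaining cases $v=d^0_1(e_w)$ then follow by checking that both sides of (\ref{left}) are chain maps of the same degree, which holds because $\Delta_{_\Box}$ is a chain map and $\sqcap$ satisfies the twisted Leibniz rule. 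The cells $v$ with $u\sqcap v=du\sqcap v=u\sqcap dv=0$ give zero on both sides.

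\emph{Step 1 (part (i)).} Let $v=e_w$. If $w\neq d^0_{B}(u)$ for every $B$, the right side of (\ref{left}) vanishes. The left side is $\sum \pm\,(d^0_B u\sqcap e_w)\otimes d^1_A u$. Since $w$ is a $0$-type face of $u$ only if it is a $0$-type face of some $d^0_Bu$, and conversely $0$-faces of $d^0_Bu$ are $0$-faces of $u$, each summand vanishes, and the left side vanishes too.

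If $w=d^0_{B_u}(u)$, then $u\sqcap e_w=d^1_{A_u}(u)$, a cube whose minimal corner is $w$ and whose coordinates are the $B_u$-directions. Its diagonal is $\sum \pm\, d^0_{B'}d^1_{A_u}u\otimes d^1_{A'}d^1_{A_u}u$. On the left, the summand indexed by $A|B$ is nonzero exactly when $B\subseteq B_u$, i.e. when $w$ is a $0$-face of $d^0_Bu$. In that case $d^0_Bu\sqcap e_w=d^1_{A_u}d^0_Bu$. Using the cubical identity $d^0_B d^1_{A_u}=d^1_{A_u}d^0_B$ (the index sets are disjoint), the splittings $A|B$ of $\underline m$ with $B\subseteq B_u$ are in bijection with the splittings $A'|B'$ of $B_u$, the correspondence being $B'=B$ and $A'=A\setminus A_u$. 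The second tensor factors agree because $d^1_Au=d^1_{A'}d^1_{A_u}u$ after reindexing.

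\emph{Step 2 (signs).} It remains to compare the Koszul sign from $T$ and the sign of $\sqcap$ (degree $-n$) with the Serre shuffle signs. This is a bookkeeping check. We expect it to be fine once we note that $T$ moves $e_w$, of degree $n-|w|$, past $d^1_Au$; this is where the $(-1)^{n+q}$ convention of Proposition \ref{algebra} is calibrated.

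\emph{Step 3 (part (ii)).} The mirrored identity fails on the nose because $\sqcap$ is asymmetric: $e_w$ is built from $1$-faces and $u\sqcap e_w$ uses the $d^0$-corner. On homology, however, $\sqcap$ is commutative by Proposition \ref{commut}. The plan is to produce a chain homotopy $h$ with $\sqcap\circ T\simeq\pm\,\sqcap$ and conjugate (i) by $T$:
\[
(1\otimes\sqcap)(T\otimes1)(1\otimes\Delta_{_\Box})=(T)\,(\sqcap\otimes1)(1\otimes T)(\Delta_{_\Box}\otimes1)\,(T),
\]
up to the symmetry $T\Delta_{_\Box}\simeq\Delta_{_\Box}$ (the cubical cup-$1$ homotopy). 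Combining these with (i) gives the required homotopy, via the formula $(1\otimes h)(T\otimes1)(1\otimes\Delta_{_\Box})+(\text{cup-}1\text{ term})\circ\sqcap$.

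The \textbf{main obstacle} is constructing $h$ concretely. We would obtain it through diagram (\ref{comparison}): transport the classical homotopy for commutativity of $\widetilde\sqcap$ (exchanging the roles of $K$ and $K^\divideontimes$ via a displacement) back along $\theta^{\vartriangle}$. The check needed is that the result is compatible with the Leibniz signs.
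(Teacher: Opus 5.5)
Your Step 1 is the paper's argument (i1). Writing $u=w\times(u\sqcap e_w)$, the components $u_D\otimes u_A$ of $\Delta_{_\Box}(u)$ with $w\subseteq u_D$ correspond to those of $\Delta_{_\Box}(u\sqcap e_w)$ via $u_D\sqcap e_w=u_B$, and the other components are killed.

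For $v=d^0_1(e_w)$ the paper tracks the components of $d^1(u\sqcap e_w)$ directly, whereas you appeal to both sides being chain maps. That shortcut can work, but agreement on all $x\otimes e_w$ does not by itself give agreement on $u\otimes d^0_1(e_w)$. You must isolate that term inside $u\otimes d(e_w)$, which needs two facts:
\begin{itemize}
\item $\sqcap$ kills $d^0_i(e_w)$, $i>1$, in the second slot;
\item each $d^1_i(e_w)$ is again some $e_{w'}$ with $w\subset w'\subset e$, so that Step 1 applies to it.
\end{itemize}
Also, the Leibniz sign $(-1)^{n+q}$ is not the Koszul one. With it, the left side of (\ref{left}) is a chain map for the same source differential as the right side if $T$ is the unsigned switch, but not with the Koszul-signed switch. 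So Step 2 is a genuine convention choice, not just bookkeeping.

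The genuine gap is in (ii): you never produce $h$, and ``transport the classical homotopy along $\theta^{\vartriangle}$'' is not yet an argument. Diagram (\ref{comparison}) only expresses $\sqcap$ through $\widetilde\sqcap\circ(\theta^{_\Box}\otimes\Theta^{\divideontimes})$. The swapped product instead factors through $\Theta^{\divideontimes}\otimes\theta^{_\Box}$ and $C_*(K^\divideontimes)\otimes C_*(K)$, and comparing the two on $C_*(K^{_\Box})^{\otimes 2}$ is exactly what is unproved. Your displayed conjugation identity also needs $h$, not only $T\Delta_{_\Box}\simeq\Delta_{_\Box}$.

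The missing observation is that the paper works over a field. Over a field, two chain maps inducing the same map on homology are chain homotopic. By (i), Proposition \ref{commut}, and cocommutativity of the coproduct on $H_*(X)$, the two sides of (ii) induce the same map on $H_*(X)\otimes H_*(X)$. So the homotopy exists with no explicit $h$.

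The paper's route looks different. It passes to $X^{op}$, where the argument of (i), with $e^{op}_w$ now in the first slot of $\sqcap^{op}$, gives the mirrored identity on the nose. It then concludes from the fact that $\sqcap$ and $\sqcap^{op}$ induce the same product in homology. Unwinding $\iota$ turns the diagonal of $X^{op}$ into $T\Delta_{_\Box}$ and $\sqcap^{op}$ into $\pm\sqcap\circ T$. So that identity is just (i) conjugated by $T$, and the paper is really taking your route. In both versions, what closes the argument is a homology-level comparison plus the field hypothesis.
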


Theorem \ref{intersection} follows from Proposition
\ref{serrediagonal}. The definition of the $\Cap$ -- product on the loop homology $H_\ast(\Lambda Y)$ uses
the  combinatorial model  $|\widehat{\mathbf{\Omega}}X| \xrightarrow{\iota}
 |\widehat{\mathbf{\Lambda}} X| \xrightarrow{\zeta} |X|$
of the free loop fibration
 $\Omega Y\rightarrow \Lambda Y\rightarrow Y.$ This model
  is built   by means of   \emph{  the cubical  necklical set}
 $\widehat{\mathbf{\Omega}} X $ and
 \emph{the cubical   closed necklical set}
 $\widehat{\mathbf{\Lambda}}X$
 both having canonical  permutahedral set structures. The definition of
these  necklical sets mimics the one of simplicial necklical sets
 \cite{RS2}. Using an explicit diagonal of permutahedra \cite{SU} (subsection \ref{DP} below)  we  introduce
 the coproducts on the permutahedral chain complexes   $C^\diamond_\ast({\mathbf{\Omega}} X)$ and  $C^\diamond_\ast({\mathbf{\Lambda}} X)$
  that induce the standard coproducts on the  loop  homologies
 $H_\ast(\Omega Y)$   and
  $H_\ast(\Lambda Y)$ respectively (compare \cite{saneFREE}).
  Then we detect the relation between  the  $\Cap$ -- product and the  $\Delta$ -- coproduct  on   $H_\ast(\Lambda Y)$  (Theorem \ref{string}).

   As  $C^\diamond_\ast({\mathbf{\Lambda}} X)$ is identified with the coHochschild complex $ \hat \Lambda C_\ast(X)$  of the cubical chains $C_\ast(X)$ (Theorem \ref{hat-coHoch})
  we immediately obtain the (twisted) coproduct on
$ \hat \Lambda C_\ast(X)$
 in terms of the coproducts on $C_\ast(X)$ and $C_\ast({\mathbf{\Omega}}
X).$
 In the pure algebraic setting the above relationship motivates to introduce the notion of a \emph{loop bialgebra}
for a dg coalgebra $C$ endowed with higher order cooperations (including Steenrod's chain $\smile_1$ -- cooperation,  e.g., $C$ is a coGerstenhaber coalgebra)   in the last section.

\textbf{Acknowledgments.} I am grateful to M. Rivera for valuable discussions during the process of writing  the paper.

 \section{The proof of Propositions  \ref{algebra}, \ref{commut}    and
 \ref{serrediagonal}}

Throughout the paper  the coefficients is a field unless otherwise is stated.
The chain complex $(C_\ast(X),d)$  of a pointed  cubical set $(X,x_0)$ is defined as
$C_\ast(X)=C'(X)/C'_{>0}(D(x_0))$ where $C'(X)$ is the chains of $X,$
and $D(x_0)$ is the degeneracies   in $X$ arising from the  vertex $x_0;$
 the differential $d$ is defined for $u\in X_m$   by
\[d(u)=d^0(u)-d^1(u)=\sum_{1\leq i\leq m} (-1)^{i}d^0_i(u)-\sum_{1\leq i\leq m}(-1)^{i}d^1_i(u),\]
and the cubical chain-level (Serre) diagonal $\Delta_{_\Box}: C_*(X) \to C_*(X)
\otimes C_*(X)$
is defined  by
\begin{equation}\label{dcube}
\Delta_{_\Box}(u)=\sum_{ A|B\in \overline{P}(m)}
sgn(A;B)\cdot u_B\otimes u_A.
\end{equation}

  \begin{remark}
  We have
$
  \left( \dim u_B\,,\dim u_A)\right)=(\#A,\#B),$ so that
     this equality    answers to the sign of the differential of the
     cobar construction on $C_*(X)$
     (cf. \!(\ref{Psign}),(\ref{Pset}),(\ref{cobarsign})
      and Theorem \ref{hat-coHoch}).
  \end{remark}

\noindent\emph{Proof of Proposition \ref{algebra}}.
If $u\sqcap v$  is zero or zero-dimensional,   then $d u\sqcap v=
u\sqcap d v=0.$ Let $u\sqcap v$ be  positive dimensional.
 Consider
$d^{\epsilon}(u\sqcap v),\, \epsilon=0,1.$

1.   Let $v=e_w.$  We have two subcases:

1a.  $\epsilon=0.$
Using the  cubical relation $d^0d^1(u)=-d^1d^0(u)$ we immediately obtain the equality
 \[d^0(u\sqcap e_w)=(-1)^{n+q}d^0(u)\sqcap e_w.\]

1b. $\epsilon=1.$  By the second item of the definition of the $\sqcap$ -- product
 \[ d^1(u\sqcap e_w)=u\sqcap \left(- d^0_1(e_w)+ d^1(e_w)\right).\]
 By definition 
 we have $d^1(u) \sqcap  e_w=0$ (since $w\nsubseteq \partial^1(u)$)
 and  $u\sqcap d^0_i( e_w  )=0$ for  $i>1.$
Thus,
\[ d(u\sqcap e_w)= (-1)^{n+q} \, d(u)\sqcap e_w +  u\sqcap d(e_w).\]

2. Let $v=d^0_1(e_w).$

2a.  $\epsilon=0.$ Then
 \begin{multline*}
d^0(u\sqcap d^0_1(e_w))= d^0(-d^1(u\sqcap e_w )+ u\sqcap d^1(e_w))=
d^1d^0(u\sqcap e_w )+ d^0(u\sqcap d^1(e_w))=
\\
(-1)^{n+q}d^1(d^0(u)\sqcap e_w) -(-1)^{n+q}d^0(u)\sqcap d^1(e_w)=\\
(-1)^{n+q}\,d^0(u)\sqcap d^1(e_w)-(-1)^{n+q}\, d^0(u)\sqcap d^0_1(e_w)
 -
 (-1)^{n+q}\,d^0(u)\sqcap d^1(e_w)=\\
 - (-1)^{n+q}\,d^0(u)\sqcap d^0_1(e_w).
\end{multline*}

2b.   $\epsilon=1.$ Then
\begin{multline*}
d^1(u\sqcap d^0_1(e_w))=d^1(-d^1(u\sqcap e_w )+ u\sqcap d^1(e_w))=d^1(u\sqcap d^1(e_w))=
\\
u\sqcap(d^0+d^1)(d^1(e_w))=u\sqcap d^0_1d^1(e_w)=-
u\sqcap d^1d^0_1(e_w).
\end{multline*}
Thus,
\[ d(u \sqcap d^0_1(e_w)) =  d^0(u)\sqcap d^0_1(e_w)+u\sqcap d^1d^0_1(e_w).  \]
Since $     d^1(u)\sqcap d^0_1(e_w)=0 $  (since $w\nsubseteq \partial^1(u)$) and    $  u\sqcap d^0d^0(e_w)   =0,    $ obtain
\[
\hspace{1.05in} d(u \sqcap d^0_1(e_w)) = (-1)^{n+q}\, d(u)\sqcap d^0_1(e_w)+u\sqcap d(d^0_1(e_w)).   \hspace{0.3in}\Box
\]
\bigskip

\noindent \emph{Proof of Proposition \ref{commut}}.
Let $(X^{op},\widetilde{d}^\epsilon_i) $ be the cubical set obtained from $X$
by interchanging  the face operators $d^0_i(u)$ and $d^1_i(u)$ for all
$i$ and $u,$ i.e.,
$\widetilde{d}^0_i(u)=d^1_i(u)$ and $\widetilde {d}^1_i(u)=d^0_i(u).$
 Then
define  the $\sqcap^{op}$-- product
\[   \sqcap^{op}:  C_p(X^{op})  \otimes  C_q(X^{op})\rightarrow C_{p+q-n}(X^{op})   \]
 as the composition
\[
  C_p(K^{_\Box})\otimes C_{q}(K^{_\Box}) \xrightarrow
 {\Theta^{\divideontimes}\otimes\,\theta^{\Box}}
 C_{p}(K^\divideontimes)  \otimes   C_q(K)
   \xrightarrow {\widetilde{\sqcap}^{\,op}}
   C_{p+q-n}(K^\prime)\xrightarrow{\theta^{\vartriangle}}
    C_{p+q-n}(K^{_\Box}).
 \]
Let $e^{op}_w \in X^{op}$ be defined   as   $e_w\in X,$ but by replacing $d^1$ operator by $d^{0},$
and then
in terms of
  elementary chain  cubes
$u,v\in X^{op} $
  \[
  u\sqcap v=\left\{
      \begin{array}{lllll}
      d^0_{B_v}(v) ,   &  u=e^{op}_w,  \ \ \ \ \ w=d^1_{A_v}( v),
         \vspace{1mm}\\
           0      ,&    u=e^{op}_w,    \ \ \  \  \   w\neq d^1_{A_v}( v)  ,  \vspace{1mm}     \\
-d^0( e^{op}_w  \sqcap^{op}  v) +  d^0(e^{op}_w)\sqcap^{op}  v,  &
                  u=d^1_{1}(e^{op}_w).

 \end{array}
 \right.
      \]
A given $u\in X$ considered as an element in $X^{op}$ is denoted by  $u^{op},$    and let 

\noindent $\iota: X \rightarrow X^{op},  \,\,  u\rightarrow u^{op}. $
Then
\[    (u\sqcap v)^{op}=(-1)^{ pq }\, v^{op}\sqcap^{op} u^{op}. \]
Since
 $\iota $ induces an isomorphism in homology and $\sqcap$ and $\sqcap^{op}$ induce the same 
 product in homology, and the commutativity follows. To check the associativity is straightforward.
\hspace{4.3in}$\Box$

\vspace{0.1in}

\noindent \emph{Proof of Proposition \ref{serrediagonal}}.
(i)
 Let
 $u\otimes v\in C_p(X)\otimes C_q(X),$  and
  $u= u_D\times u_C$  for some $C|D\in \overline{P}(p).$

(i1) $ v=e_w.$ Let  $u_{B}\otimes u_{A}$  be a component in $ \Delta_{_\Box}(u\sqcap e_w)$ for some $A|B\in \overline{P}(p+q-n).$ Then $u=w\times  (u\sqcap e_w ) =w\times u_{B}\times u_{A}=
u_D\times u_A$ for $u_D:=w\times u_{B}.$  Hence,  $u_{B}=u_D\sqcap e_w.$
For any component  $u_{D'}\otimes u_{A'} \in \Delta_{_\Box}(u)$   such that $w\nsubseteq u_{D'}$ we have by definition that
    $u_{D'}\sqcap e_w=0.$  Thus, equality (\ref{left})   is verified
    for $(u,v)=(u,e_w).$

(i2)   $v=d^0_1(e_w).$ If $ d^1(u\sqcap e_w)= u\sqcap d^1(e_w)$   nothing is to prove. Otherwise $d^1(u\sqcap e_w)$
contains
 $ u\sqcap d^1(e_w)$
as a summand component.
 Consider $d^1_i(u\sqcap e_w)     $ for some $i$ not contained in
 $ u\sqcap d^1(e_w),$        and let   $u_{C}\otimes u_A$  be a component in $\Delta_{_\Box}d^1_i(u\sqcap e_w).$
 Then there is a component $u_{C'}\otimes u_A\in \Delta_{_\Box}(u\sqcap e_w)$ such that  $d^1_k( u_{C'})= u_C,  $ where $k$ is defined by $i=i_k$  in $ A=\{ i_1<\cdots <i_k<  \cdots <i_{p}\},\,   d^1_{A}( u\sqcap e_w )= u_A. $
 By the item (i1) there is a component $u_{D}\otimes u_{A}\in  \Delta_{_\Box}(u)$   such that $u_D \sqcap e_w=u_{C'}. $
Hence, the item (i2), and, consequently, the item (i)  is proved.

(ii) Let $X^{op}$  be the cubical set as in the proof of Proposition \ref{commut}.   Similarly to  the proof of the item (i)  we establish  the equality in $X^{op}$
 \[(  1\otimes  \sqcap^{op}) \circ (T \otimes 1)\circ(1\otimes
   \Delta_{_\Box})
  = \Delta_{_\Box} \circ \sqcap^{op}.
   \]
   Consequently, we get the chain homotopy as desired.
  \hspace{1.6in}$\Box$

\vspace{0.2in}

\hspace{-0.15in}
\unitlength 1mm 
\linethickness{0.8pt}
\ifx\plotpoint\undefined\newsavebox{\plotpoint}\fi 
\begin{picture}(125.164,88)(0,0)
\put(49.798,2.676){\line(1,0){30.473}}
\put(18.284,48.312){\line(1,0){30.473}}
\put(83.096,48.312){\line(1,0){30.473}}
\multiput(80.271,2.676)(.067162651,.105668675){166}{\line(0,1){.105668675}}
\multiput(48.757,48.312)(.067162651,.105662651){166}{\line(0,1){.105662651}}
\multiput(113.569,48.312)(.067162651,.105662651){166}{\line(0,1){.105662651}}
\multiput(91.42,20.217)(-.067434146,.076863415){205}{\line(0,1){.076863415}}
\multiput(59.906,65.852)(-.067434146,.076863415){205}{\line(0,1){.076863415}}
\multiput(124.718,65.852)(-.067434146,.076863415){205}{\line(0,1){.076863415}}
\multiput(77.596,35.974)(-9.66233,.04933){3}{\line(-1,0){9.66233}}
\multiput(46.082,81.609)(-9.66233,.04967){3}{\line(-1,0){9.66233}}
\multiput(110.894,81.609)(-9.66233,.04967){3}{\line(-1,0){9.66233}}
\multiput(48.609,35.676)(-.067444444,-.072949074){216}{\line(0,-1){.072949074}}
\multiput(17.095,81.312)(-.067444444,-.072949074){216}{\line(0,-1){.072949074}}
\multiput(81.907,81.312)(-.067444444,-.072949074){216}{\line(0,-1){.072949074}}
\multiput(34.041,19.919)(.067275862,-.073685345){232}{\line(0,-1){.073685345}}
\multiput(2.527,65.555)(.067275862,-.073685345){232}{\line(0,-1){.073685345}}
\multiput(67.339,65.555)(.067275862,-.073685345){232}{\line(0,-1){.073685345}}
\multiput(2.527,65.406)(8.218286,.063714){7}{\line(1,0){8.218286}}
\multiput(17.392,81.609)(.0674516129,-.0716064516){465}{\line(0,-1){.0716064516}}
\multiput(45.784,81.609)(-.0674019608,-.0816102941){408}{\line(0,-1){.0816102941}}
\multiput(34.041,19.771)(8.197,.063714){7}{\line(1,0){8.197}}
\multiput(48.609,36.122)(.0673961864,-.0708601695){472}{\line(0,-1){.0708601695}}
\multiput(77.596,35.825)(-.0673421687,-.0798771084){415}{\line(0,-1){.0798771084}}
\put(89.934,73.136){\line(0,1){0}}
\multiput(82.055,81.758)(.172246032,-.067246032){126}{\line(1,0){.172246032}}
\put(96.474,80.569){\line(0,1){0}}
\multiput(96.772,81.609)(.0660556,-1.8581111){18}{\line(0,-1){1.8581111}}
\multiput(110.894,81.461)(.066875,-.825025){40}{\line(0,-1){.825025}}
\multiput(91.123,57.825)(.158273381,-.067374101){139}{\line(1,0){.158273381}}
\multiput(83.542,48.609)(.154072993,.067270073){137}{\line(1,0){.154072993}}
\put(104.65,60.947){\line(0,1){0}}
\multiput(104.799,57.825)(.167386555,.067453782){119}{\line(1,0){.167386555}}
\multiput(97.366,65.704)(.213160377,-.067320755){106}{\line(1,0){.213160377}}
\put(67.785,65.852){\line(1,0){56.636}}
\multiput(103.758,72.987)(.194924528,-.067311321){106}{\line(1,0){.194924528}}
\multiput(97.515,65.555)(.200537736,.067311321){106}{\line(1,0){.200537736}}
\multiput(97.217,66.001)(-.206234234,.066954955){111}{\line(-1,0){.206234234}}
\multiput(91.271,57.676)(-.199865546,.067453782){119}{\line(-1,0){.199865546}}
\multiput(97.366,65.703)(-.169230769,-.067461538){130}{\line(-1,0){.169230769}}
\multiput(83.541,65.852)(-.04933,-5.74767){3}{\line(0,-1){5.74767}}
\multiput(83.541,65.852)(-.0646087,.698){23}{\line(0,1){.698}}
\put(82.352,81.758){\circle*{.595}}
\put(67.636,65.703){\circle*{.595}}
\put(83.095,48.609){\circle*{.595}}
\put(113.569,48.46){\circle*{.595}}
\put(124.718,66.001){\circle*{.595}}
\put(110.893,81.163){\circle*{.595}}
\put(97.366,65.852){\circle*{.665}}
\put(2.676,65.555){\circle*{.595}}
\put(17.541,81.46){\circle*{.595}}
\put(45.784,81.46){\circle*{.595}}
\put(59.757,65.852){\circle*{.595}}
\put(32.257,65.406){\circle*{.595}}
\put(18.433,48.014){\circle*{.841}}
\put(48.757,48.46){\circle*{.595}}
\put(48.757,36.271){\circle*{.297}}
\put(34.338,19.622){\circle*{.595}}
\put(77.595,35.973){\circle*{.595}}
\put(91.271,20.216){\circle*{1.189}}
\put(64.217,19.919){\circle*{.595}}
\put(49.649,2.676){\circle*{.665}}
\put(80.42,2.527){\circle*{.595}}
\multiput(63.325,30.622)(.19325,-.0669){40}{\line(1,0){.19325}}
\multiput(49.501,24.379)(.04933,-3.36967){3}{\line(0,-1){3.36967}}
\multiput(49.649,14.27)(.19622667,-.06738667){75}{\line(1,0){.19622667}}
\multiput(64.366,9.216)(.151912088,.066978022){91}{\line(1,0){.151912088}}
\put(78.19,15.311){\line(0,1){9.96}}
\multiput(78.19,25.271)(-.16186667,.06606667){45}{\line(-1,0){.16186667}}
\put(63.325,30.473){\line(0,1){5.797}}
\multiput(78.339,25.122)(.14864286,.0672381){42}{\line(1,0){.14864286}}
\multiput(78.19,15.162)(.13803571,-.06635714){56}{\line(1,0){.13803571}}
\put(64.366,9.068){\line(0,-1){6.392}}
\multiput(49.501,14.27)(-.11534483,-.06662069){58}{\line(-1,0){.11534483}}
\multiput(49.501,24.379)(-.13131667,.06688333){60}{\line(-1,0){.13131667}}
\put(49.501,24.527){\circle*{1.5}}
\put(41.771,27.946){\circle*{.595}}
\put(63.176,35.825){\circle*{.595}}
\put(63.176,30.622){\circle*{1.5}}
\put(56.338,27.946){\circle*{1.5}}
\put(71.055,27.946){\circle*{1.5}}
\put(71.2,12.2){\circle*{1.5}}
\put(78.19,25.271){\circle*{1.5}}
\put(84.433,27.798){\circle*{.595}}
\put(78.19,19.77){\circle*{1.5}}
\put(78.041,15.46){\circle*{1.5}}
\put(85.771,11.446){\circle*{.595}}
\put(64.217,9.216){\circle*{1.5}}
\put(63.92,2.676){\circle*{.595}}
\put(57.379,11.595){\circle*{1.5}}
\put(49.798,14.419){\circle*{1.5}}
\put(42.514,10.406){\circle*{.595}}
\put(49.352,19.919){\circle*{1.5}}
\put(74.474,73.285){\circle*{.595}}
\put(83.393,70.906){\circle*{.595}}
\put(103.907,73.136){\circle*{.595}}
\put(111.636,70.312){\circle*{.595}}
\put(118.92,72.69){\circle*{.595}}
\put(112.231,65.852){\circle*{.595}}
\put(112.528,61.244){\circle*{.595}}
\put(120.109,58.42){\circle*{.595}}
\put(104.65,57.528){\circle*{.595}}
\put(97.961,55.149){\circle*{.595}}
\put(91.123,57.528){\circle*{.595}}
\put(83.244,60.203){\circle*{.595}}
\put(75.366,56.933){\circle*{.595}}
\put(96.92,75.96){\circle*{.595}}
\put(45.933,83.987){\makebox(0,0)[cc]{$v_1$}}
\put(111.042,83.839){\makebox(0,0)[cc]{$v_1$}}
\put(77.893,37.906){\makebox(0,0)[cc]{$v_1$}}
\put(32.257,69.42){\makebox(0,0)[cc]{$v_3$}}
\put(98.406,68.676){\makebox(0,0)[cc]{$v_3$}}
\put(64.217,22.2){\makebox(0,0)[cc]{$_{w}$}}
\put(16.946,83.69){\makebox(0,0)[cc]{$v_2$}}
\put(82.798,83.839){\makebox(0,0)[cc]{$v_2$}}
\put(48.609,38.054){\makebox(0,0)[cc]{$v_2$}}
\put(2.081,68.379){\makebox(0,0)[cc]{$v_4$}}
\put(67.636,68.825){\makebox(0,0)[cc]{$v_4$}}
\put(17.392,45.636){\makebox(0,0)[cc]{$v_5$}}
\put(83.541,45.784){\makebox(0,0)[cc]{$v_5$}}
\put(49.798,0){\makebox(0,0)[cc]{$v_5$}}
\put(48.906,45.784){\makebox(0,0)[cc]{$v_6$}}
\put(113.569,45.933){\makebox(0,0)[cc]{$v_6$}}
\put(80.271,.595){\makebox(0,0)[cc]{$v_6$}}
\put(59.757,69.122){\makebox(0,0)[cc]{$v_7$}}
\put(125.164,68.825){\makebox(0,0)[cc]{$v_7$}}
\put(63.325,26.716){\makebox(0,0)[cc]{$e_w$}}
\put(57,23.8){\makebox(0,0)[cc]{$_{d^0_1}$}}
\put(70.014,23.8){\makebox(0,0)[cc]{$_{d^0_2}$}}
\put(58.271,31.5){\makebox(0,0)[cc]{$_{d^1_2}$}}
\put(68.23,31.5){\makebox(0,0)[cc]{$_{d^1_1}$}}

\put(53.5,18){\makebox(0,0)[cc]{$_{w'_1}$}}
\put(62,14.419){\makebox(0,0)[cc]{$_{w'_2}$}}
\put(67,12.933){\makebox(0,0)[cc]{$_{w'_3}$}}
\put(73.5,18){\makebox(0,0)[cc]{$_{w'_4}$}}
\put(30.325,19.473){\makebox(0,0)[cc]{$v_4$}}
\put(94.69,20.365){\makebox(0,0)[cc]{$v_7$}}
\put(30.771,88){\makebox(0,0)[cc]{$K$}}
\put(95.731,88){\makebox(0,0)[cc]{$K'$}}
\put(66.2,-4){\makebox(0,0)[cc]{$K^{\Box}$}}
\put(59.163,49.203){\vector(1,0){12.338}}
\put(64.366,52.325){\makebox(0,0)[cc]{$Sd$}}

\multiput(83.375,48.875)(.067450495,.0810643564){404}{\line(0,1){.0810643564}}
\multiput(82.25,81.75)(.0673491379,-.0713900862){464}{\line(0,-1){.0713900862}}
\multiput(96.474,75.961)(.09933,.04933){3}{\line(1,0){.09933}}
\multiput(90.974,73.136)(-.1188,.0596){5}{\line(-1,0){.1188}}
\put(96.623,76.109){\line(5,-2){14.865}}
\multiput(111.488,70.163)(.0637143,-.6795){14}{\line(0,-1){.6795}}
\multiput(112.38,60.65)(-.16201124,-.06680899){89}{\line(-1,0){.16201124}}
\multiput(83.096,70.609)(.063714,-1.465286){7}{\line(0,-1){1.465286}}
\multiput(96.92,75.812)(.16458333,.06725){84}{\line(1,0){.16458333}}
\multiput(67.785,65.704)(.21773239,.067){71}{\line(1,0){.21773239}}
\multiput(83.393,70.906)(.19261972,.067){71}{\line(1,0){.19261972}}
\multiput(83.393,71.204)(.20904687,.06735937){64}{\line(1,0){.20904687}}
\put(98.11,48.163){\circle*{.892}}
\put(83.096,70.609){\circle*{.892}}
\put(83.542,65.704){\circle*{.892}}
\put(96.623,81.907){\circle*{.892}}
\put(89.934,73.434){\circle*{.892}}
\multiput(49.946,24.825)(.15032048,.0668091){89}{\line(1,0){.15032048}}

\put(71.25,77.125){\vector(-1,0){12}}

\put(40.75,34.75){\vector(-1,1){9.25}}

\put(27.75,42.75){\vector(1,-1){9.25}}
\put(28.75,37.00){\makebox(0,0)[cc]{$Sd_{_\Box}$}}

\put(95.75,43.75){\vector(-1,-1){9.25}}
\put(90.75,33.75){\vector(1,1){9.25}}
\put(100.00,37.00){\makebox(0,0)[cc]{$Sd'_{_\Box}$}}

\put(64.875,79.875){\makebox(0,0)[cc]{$\theta$}}
\put(89.75,41.00){\makebox(0,0)[cc]{$\theta^{\Delta}$}}

\put(39.75,40.875){\makebox(0,0)[cc]{$\theta^{_\Box}$}}

\end{picture}

\vspace{0.4in}

\begin{center}

Figure 1.      The first barycentric  simplicial  and  cubical
subdivisions of $K$ with $Sd=Sd'_{_\Box}\circ Sd_{_\Box}.$
\end{center}

\vspace{0.2in}

To prove Theorem \ref{string} we need some preliminaries.

 \section{Cubical  (closed) necklaces and  necklical sets}

\subsection{Cubical necklaces}
 Denote by $Set_{_{\Box}}$ the category of cubical sets and by
 $I^{m}$ the standard $m$-cube. A \textit{cubical
 necklace} is a wedge of standard cubes
\[T=I^{n_1} \vee ... \vee I^{n_k} \in Set_{_\Box},\ \ \ \ n_i,k\geq
1,\]
 where
the last vertex of $I^{n_i}$ is identified with the first vertex of
$I^{n_{i+1}}$ whenever $k\geq2$ and $1\leq i<k.$
Each $I^{n_i}$ is a subcubical set of $T$, which we call a
\textit{bead} of $T$. Denote by $b(T)$ the number of beads in $T$. The
set  $T_0$ of the \textit{vertices} of $T$, inherits a partial ordering
from the ordering of the beads in $T$ and the partial ordering of the
vertices of each $I^{n_i}$. A morphism of cubical necklaces $f: T \to
T'$  is a morphism of cubical sets which preserves first and last
vertices. If $T= I^{n_1} \vee ... \vee I^{n_k}$ is a cubical necklace,
then the dimension of $T$ is defined to be $\text{dim}(T)= n_1 + \cdots
+ n_k -k$. Denote by $Nec_{_\Box}$ the category of cubical necklaces. A
\textit{cubical necklical set} is a functor $ Nec^{op}_{_\Box}
\rightarrow Set$ and a morphism of cubical necklical sets is given by a
natural transformation of functors. Denote the category of cubical
necklical sets by $Set^{_{\Box}}_{Nec}$.

\begin{proposition} \label{maps1}
Any non-identity morphism in $Nec_{_\Box}$ is a composition of
morphisms of the following type
\begin{itemize}
\item [(i)]  $f: T \to T'$ is an injective morphism of cubical
    necklaces and $ \dim(T') - \dim(T) =1,$ $ b(T)-b(T^\prime)=1,$
    and  $T$ and $T'$ have the same number of vertices;

\item [(ii)]
 $f_{p,j}: I^{n_1} \vee ... \vee I^{n_p+1}  \vee... \vee  I^{n_k} \to I^{n_1} \vee ...\vee
    I^{n_{p}}  \vee... \vee  I^{n_k}$
 is a morphism of cubical necklaces of the form
    \[f_{p,j}=id \vee ...\vee id \vee \eta^j\vee id\vee ...   \vee id,\,
    1\leq p\leq k, \] such that
     $\eta^j: I^{n_p+1} \to I^{n_p}$ is a cubical co-degeneracy morphism
      for  $n_p\geq 1$   and $1\leq j\leq n_p.$

\item [(ii')] $f_p: I^{n_1} \vee ...\vee I^{n_p}  \vee... \vee  I^{n_k} \to I^{n_1} \vee ...\vee I^{n_{p-1}} \vee I^{n_{p+1}} \vee... \vee  I^{n_k},\,k,p\geq 2,$ is a morphism of cubical necklaces such that $f_p$ collapses the $p$-th bead $I^{n_p}$  in the domain to the last vertex of the $(p-1)$-th bead in the target and the restriction of $f$ to all the other beads is identity.
\end{itemize}
\end{proposition}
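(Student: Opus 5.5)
Any morphism $f: T\to T'$ in $Nec_{_\Box}$ will be factored as a surjection followed by an injection: $f = j\circ s$, where $s: T\to f(T)$ is the corestriction onto the image subcubical set and $j: f(T)\hookrightarrow T'$ is the inclusion. The first step is to check that $f(T)$ is itself a cubical necklace whose first and last vertices are those of $T'$. Since $f$ preserves first and last vertices and each bead maps to a cube of some bead of $T'$, $f(T)$ is a wedge of faces of beads of $T'$ (possibly several consecutive images lying in the same bead). These faces form a chain joining the first vertex of $T'$ to its last, so $f(T)$ is a necklace. I would then treat the two pieces separately, following the analogous statement for simplicial necklaces in \cite{RS2}.

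For the surjective part $s$, I will argue bead by bead. Each bead $I^{n_p}\to T'$ is a morphism of cubical sets out of a standard cube, so it is the image of the top cell, an element of some bead of $T'$. By the cubical identities this element is uniquely an iterated face of an iterated degeneracy. For a surjection onto the image no faces occur except the degenerate extreme case, in which the whole bead collapses to a vertex. That vertex must be the last vertex of the previous bead's image, which gives a map of type (ii'). Otherwise the bead's map is a composite of codegeneracies $\eta^j$, which gives maps of type (ii). The order is first collapse beads via (ii'), then apply codegeneracies bead-wise via (ii). This uses that codegeneracies $\eta^j: I^{m+1}\to I^m$ generate all surjections of standard cubes preserving the extreme vertices, which is the standard normal form in the cube category without connections.

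For the injective part $j: S\hookrightarrow T'$, I will induct on $\dim(T')-\dim(S)$, which is positive unless $j$ is the identity. Take a bead $J$ of $T'$. The beads of $S$ lying in $J$ are faces $F_1,\dots,F_r$ chained from $\min J$ to $\max J$. If $r=1$ and $F_1\neq J$, then $F_1$ is a proper face containing both extreme vertices of $J$, which is impossible in a cube. So either $J$ itself is a bead of $S$, or $r\ge 2$. In the latter case I would show that consecutive $F_i,F_{i+1}$ can be enlarged one step at a time. I will also need that a face of a cube containing $\min J$ and $\max J$ of a sub-cube can be reached by single-dimension steps in which one bead of $S$ is replaced by a bead of one higher dimension, joining two beads into one while keeping the vertex set. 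This is exactly a type (i) map. Concretely, when $F_i\vee F_{i+1}\subset J$, the smallest face $G$ of $J$ containing both has $\dim G=\dim F_i+\dim F_{i+1}$, which is the case of $F_i=d^0_{B}G$, $F_{i+1}=d^1_A G$ with $A|B$ a partition. So $F_i\vee F_{i+1}\hookrightarrow G$ is already of type (i), with dimension jump $1$ in the necklace dimension and bead count dropping by $1$.

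The main obstacle is the injective case. Beads of $S$ need not be "Cartesian" pieces of a single face; for instance, a one-dimensional path of edges can sit along a larger cube. The steps then must pass through intermediate necklaces: merge two adjacent edges into a square, and so on. I will need to verify that at each stage the merged pair is of the form $d^0_B G \vee d^1_A G$ inside the current ambient face. This follows because consecutive beads of $S$ in a cube $J$ meet at a vertex, and coordinates change monotonically from $0$ to $1$ along the chain. The coordinate sets moved by $F_i$ and $F_{i+1}$ are therefore disjoint, so their union spans a face $G$ as required. Iterating until each bead of $T'$ is reached completes the factorization.
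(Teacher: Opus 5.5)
The paper states this proposition without proof, so your argument has to stand on its own. Most of it is sound: the image factorization, and the observation that the faces inside a bead $J$ of $T'$ raise pairwise disjoint sets of coordinates from $0$ to $1$. Your identification of $F_i\vee F_{i+1}\hookrightarrow G$ with a Serre-diagonal inclusion $S^{A|B}$ is also correct, and together these settle the injective part.

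The gap is in the surjective part. You say a collapsed bead lands on the last vertex of the previous bead's image and is therefore of type (ii'). This breaks when the first bead, or an initial run of beads, is sent to the first vertex of $T'$, because (ii') is only allowed for $p\geq 2$. Worse, no composite of maps of types (ii) and (ii') can collapse the first bead at all: each of them restricts on the first bead to a surjection onto a first bead of positive dimension. The simplest instance is $I^1\vee I^1\to I^1$ sending the first edge to $0$ and the second edge identically. Here the injective part is the identity, and $s$ is this map, which your recipe cannot produce.

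The missing idea is that such a collapse must pass through a type (i) map. Merge the first two beads by $\delta^{A|B}$ with $A=\{1,\dots,n_1\}$ and $B=\{n_1+1,\dots,n_1+n_2\}$, so the first bead becomes $d^0_B$ and the second becomes $d^1_A$ of $I^{n_1+n_2}$. Then apply $n_1$ codegeneracies deleting the coordinates in $A$ (e.g. $\eta^1$ repeated, which stays in the range $j\leq n_p$). The composite is exactly the collapse of the first bead onto the first vertex of the second; in the example it is $\eta^1\circ\delta^{\{1\}|\{2\}}$. Repeating this handles an initial run of collapsed beads, since not every bead can collapse.

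In two further places you assert something false in order to match the wording of the statement. First, the merge $F_i\vee F_{i+1}\hookrightarrow G$ does not keep the vertex set. With $a=\dim F_i$ and $b=\dim F_{i+1}$, the cube $G$ has $2^{a+b}$ vertices, against $2^a+2^b-1$. Your own count (the dimensions of the beads of $S$ inside a bead of $T'$ add up to its dimension) shows more: every injective $f$ with $b(T)-b(T')=1$ adds exactly $(2^a-1)(2^b-1)>0$ vertices. So no morphism of $Nec_{_\Box}$ satisfies all clauses of (i). The vertex clause is a leftover from the simplicial case, and (i) has to be read as the class of maps $\delta^{A_i|B_i}$ of Remark~\ref{cartesiandecomp}, which is precisely what your merge step produces. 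Say this explicitly instead of claiming the vertex set is preserved.

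Second, your normal form for surjections of cubes uses all $n_p+1$ codegeneracies $I^{n_p+1}\to I^{n_p}$, while (ii) lists only $1\leq j\leq n_p$. Taken literally, one of the two projections $I^2\to I^1$ is then not a composite of generators, since on a one-bead necklace only (ii) applies. State that you read the range as $1\leq j\leq n_p+1$.
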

\begin{remark}\label{cartesiandecomp}
\normalfont
1. Unlike  simplicial necklaces here is no morphism of the form
 $ id\vee \delta^{\epsilon}_i\vee id :T \to T^{\prime},\epsilon=0,1,$
 because neither proper face of the standard cube $I^k$ contains the both minimal
 and maximal vertices of $I^k$ simultaneously.

2. Morphisms of type $(i)$
 are of the form
 \[\delta^{A_i|B_i}:  I^{n_1} \vee ... \vee I^{n_i}\vee I^{n_i+1}  \vee... \vee  I^{n_k}
  \to
 I^{m_1} \vee ...\vee    I^{m_i} \vee... \vee  I^{m_{k-1}}\]
for
$\delta^{A_i|B_i}:= id\vee ...\vee id \vee S^{A_i|B_i} \vee id\vee...\vee id $  where
$ S^{A_i|B_i}: I^{n_i} \vee I^{n_{i+1}} \to  I^{m_i},\,$  $m_i=n_i +
n_{i+1},\,$
$A_i|B_i\in P(m_i),\,1\leq i< k,$
 is the injective map of cubical sets whose image in
$I^{m_i}$ is the wedge of the two subcubical sets corresponding to the
$A_i|B_i$-th term in the Serre diagonal map applied to the unique
non-degenerate top dimensional cube in $I^{m_i}$ (cf. (\ref{dcube})).
For $n\geq 2,$ denote
\[\kappa(n)= \left( \begin{array}{c}
   n\\
   1 \\
  \end{array}
 \right) +\cdots + \left( \begin{array}{c}
   n\\
   n-1\\
  \end{array}
 \right).\]
Then
for each cubical  necklace $T^\prime=I^{m_1} \vee ... \vee
I^{m_{k-1}},$ there are exactly
$\underset{1\leq i<k}{\sum}\kappa(m_i)$
morphisms
$\delta^{A_i|B_i}: T \to T^\prime.$

\end{remark}

\subsection{Cubical closed necklaces}
We now define the category $Nec^{_\Box}_c$ of \textit{cubical closed
necklaces}. The objects of $Nec^{_\Box}_c$ are cubical sets of the form
$R=I^{n_0}\vee T$, where $n_0 \geq 0$, $T= I^{n_1} \vee ... \vee
I^{n_k}$ is a cubical necklace in $Nec^{_\Box}$, and the first vertex
of $I^{n_0}$ is identified with the last vertex of $T$.  We will call
$I^{n_0}$ and $I^{n_k}$ the \textit{first}  and \textit{last beads} of
$R$, respectively. Thus, $b(R)=b(T)+1.$  The vertices of $R$ also
inherit a  natural partial ordering from the ordering of the set of
beads of $R$ and partial ordering of the vertices on each bead
(see Figure 2).

Morphisms between cubical closed necklaces are defined to be maps of
cubical sets which preserve first beads.
 If $R=I^{n_0} \vee T= I^{n_0}
\vee I^{n_1} \vee ... \vee I^{n_k}$ is a cubical closed necklace, then
the dimension of $R$ is defined to be $\dim(R)=n_0+\dim(T)= n_0 + n_1 +
\cdots + n_k -k$.

A \textit{cubical closed necklical set} is a functor $K:
{Nec^{_\Box}_c}^{op} \rightarrow Set$ and a morphism of cubical closed
necklical sets is given by a natural transformation of functors. Denote
 by $Set^{_\Box}_{Nec_c}$ the category of cubical closed necklical sets.
A cubical set $X$   gives rise to an  example of a cubical closed
necklical set $K_X: {Nec^{_\Box}_c}^{op} \to Set$
 via  the assignment $K_X(R)= Hom(R,X)$, the set of all \emph{cubical set
 maps} from $R$ to $X$.

 Now we describe a useful set of generators for the morphisms in
 $Nec^{_\Box}_c$ similar to those described for $Nec^{_\Box}$ in
 Proposition \ref{maps1}.

\begin{proposition}\label{maps2}

Any non-identity morphism in $Nec^{_\Box}_c$  is a composition of
morphisms of the following type:

\begin{itemize}
\item[(i)] injective morphisms $f: R \to R'$ of cubical closed
    necklaces such that $\dim(R') - \dim(R)=1,$  $R$ and $R'$ have
    the same number of vertices, $b(R)-b(R^\prime)=1,$
 and $f$ preserves the last beads of the first sections;
\item[(i')] injective morphisms $f:R \to R'$ of cubical closed
    necklaces such that $\dim(R') - \dim(R)=1,$
 $R$ and $R'$ have the same number of vertices,
 $b(R)-b(R^\prime)=1,$
 and $f$ maps the last bead of  $R$ into the first bead of $R';$
\item[(i'')]  injective morphisms $f:R \to R'$ of cubical closed
    necklaces such that $\dim(R') - \dim(R)=1,$
 $R$ and $R'$ have the same number of vertices,
 $b(R)-b(R^\prime)=1,$ and $f|_{T_2}: T_2\rightarrow T'_2$ is map of
 type  (i)
 in Proposition \ref{maps1};

\item[(ii)] morphisms $f_{p,j}:  I^{n_0} \vee I^{n_1} \vee ...\vee
I^{n_p+1}\vee ... \vee I^{n_k}
    \to I^{n_0}  \vee I^{n_1} \vee ...\vee I^{n_p}\vee... \vee I^{n_k}$
    of cubical closed necklaces where
    \[f_{p,j}=  id \vee... \vee id \vee \eta^j\vee id\vee ... \vee id,\,\,
     0\leq p\leq k\] and
    $\eta^j: I^{n_p+1} \to I^{n_p}$ is a cubical
    co-degeneracy morphism for $n_0\geq 0 $ with $1\leq j\leq n_0+1 $  and  $1\leq j\leq n_p$  for  $p,\, n_p\geq 1;$

\item [(ii')]  morphisms
 \begin{multline*}
\hspace{0.35in} f_p:I^{n_0}\vee I^{n_1} \vee ...\vee I^{n_p}  \vee... \vee  I^{n_k} \to \\
    I^{n_0}\vee I^{n_1} \vee ...\vee I^{n_{p-1}} \vee I^{n_{p+1}} \vee... \vee  I^{n_k},
    \end{multline*}
    $  k\geq2,,p\geq 1,$
     of cubical closed necklaces such that $f_p$ collapses the $p$-th bead $I^{n_p}$  in the domain to the last vertex of the $(p-1)$-th bead in the target and the restriction of $f$ to all the other beads is identity.

\end{itemize}
\end{proposition}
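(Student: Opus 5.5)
The plan is to reduce Proposition \ref{maps2} to Proposition \ref{maps1} by factoring an arbitrary morphism $f:R\to R'$ of cubical closed necklaces canonically into a surjective part followed by an injective part. The argument uses Remark \ref{cartesiandecomp}: a standard cube has no proper face containing both its minimal and maximal vertex. Write $R=I^{n_0}\vee T$ and $R'=I^{m_0}\vee T'$.

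\textbf{Step 1 (factorization).} Since $f$ is a map of cubical sets, each bead of $R$ maps to a single cube of $R'$, possibly degenerate. Because $f$ preserves the first bead and the cyclic order of the wedge points, the image of each bead $I^{n_p}$ is a degenerate image of a subcube $\delta(I^{q})\subseteq I^{m_s}$ for some bead $I^{m_s}$ of $R'$, and consecutive beads go to consecutive or equal beads of $R'$. Factor $f=g\circ h$. Here $h$ is a composite of codegeneracies $\eta^j$ on individual beads (type (ii)) and collapses of beads mapped to a vertex (type (ii')). Then $g$ is injective on every bead. The codegeneracy part is standard cubical-set combinatorics, applied bead by bead. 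For the first bead one allows $j\le n_0+1$, because $n_0$ may be $0$.

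\textbf{Step 2 (injective part).} Let $g:R''\to R'$ be injective. By the remark, an injective bead cannot land in a proper face of a target bead while still passing through both endpoints. So several consecutive beads of $R''$ fill a target bead $I^{m_s}$ exactly when their wedge is the image of an iterated Serre-diagonal decomposition of $I^{m_s}$. Decompose such a wedge one merge at a time, as in Remark \ref{cartesiandecomp}(2). Each merge $S^{A|B}:I^{a}\vee I^{b}\to I^{a+b}$ raises dimension by one, keeps the vertex count, and lowers the bead count by one. There are three cases depending on where the merge happens:
\begin{itemize}
\item inside the section $T$, which is type (i'');
\item at the last bead of $T$ merging into the first bead $I^{m_0}$ across the closing point, which is type (i');
\item involving the first bead $I^{n_0}$ and the bead following it, which is type (i), preserving the last beads of the first sections.
\end{itemize}
Ordering these merges gives $g$ as a composite of maps of types (i), (i'), (i'').

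\textbf{Main obstacle.} The hard step is Step 2: showing that an injective bead-respecting map onto a bead $I^{m}$ really decomposes as iterated Serre-type merges. This amounts to proving that the consecutive subcubes covering a maximal chain from $\min$ to $\max$ in $I^m$ correspond to an ordered partition of $\underline{m}$, and then peeling off one block at a time. One must also handle the first bead carefully, since the wedge point there is identified cyclically. I would first prove the single-bead statement by induction on the number of beads, using the vertex ordering. Then I would apply it bead by bead, checking in the three cases that the first bead is preserved.
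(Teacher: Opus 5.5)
The paper states this proposition without proof (Remark \ref{freetypes} only describes the generators), so your outline has to stand on its own. Its shape is the natural one: codegeneracies and collapses first, then a bead-wise injective map which on each target bead is an iterated Serre-type merge, as in Propositions \ref{basedloops} and \ref{freeloops}. But there is a gap between your two steps. Step 1 only gives $g$ injective on each bead, while Step 2 starts from ``let $g$ be injective.'' With the definition of morphism as stated, this passage is false.

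Two examples show this. The map $I^1\vee I^1\vee I^1\vee I^1\to I^1\vee I^1$ sending the four edges alternately to the two target edges preserves first beads and is injective on beads, but it wraps around twice. A constant map to a vertex of the first bead also preserves first beads. For the constant map even Step 1 fails, since (ii$'$) cannot remove the last remaining bead of $T$ and (ii) cannot shrink it below $I^1$. Neither map is a composite of the listed generators. Every generator induces a homotopy equivalence $|R|\to|R'|$ between spaces homotopy equivalent to $S^1$, whereas these maps have degree $2$ and $0$ on $H_1$.

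So you must use, or add to the definition, that $f$ goes around $R'$ exactly once, and then prove injectivity. This is easy once stated:
\begin{itemize}
\item The beads landing in a target bead $I^m$ go to faces forming a chain from $\min I^m$ to $\max I^m$, monotone in the coordinatewise order.
\item Hence their sets of free coordinates $S_1,\dots,S_r$ are pairwise disjoint with union $\underline{m}$, so the faces meet only at wedge vertices.
\item The ordered partition $S_1|\cdots|S_r$ is exactly the datum of the iterated merge (as for $g_\Omega$) and gives the dimension count.
\end{itemize}
A first bead $I^0$ lying on such a chain contributes an empty block. That case is what $\varnothing|\underline{m}\in\overline{P}'(m)$ and $\underline{m}|\varnothing\in\overline{P}''(m)$ account for.

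Second, your claim that each merge $S^{A|B}: I^a\vee I^b\to I^{a+b}$ ``keeps the vertex count'' is wrong for cubes. The source has $2^a+2^b-1$ vertices and the target has $2^{a+b}$; these agree only when $a=0$ or $b=0$. The clause ``$R$ and $R'$ have the same number of vertices'' in (i), (i$'$), (i$''$) is a leftover from the simplicial case, where $\Delta^a\vee\Delta^b\to\Delta^{a+b}$ is bijective on vertices. Taken literally, it leaves only the degenerate merges, and all generators become surjective on vertices. Then, for example, the merge $I^0\vee I^1\vee I^1\to I^0\vee I^2$ (two vertices into three) is not a composite. What your argument actually proves is a factorization into the explicit maps $\delta^{A_i|B_i}$ and $\delta^{C|D}_{op}$ of Remark \ref{freetypes} together with (ii) and (ii$'$). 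You should state it that way rather than claim the numerical conditions.

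Similarly, Step 1 needs the codegeneracy $\eta^{n_p+1}$ on beads with $p\geq 1$. The stated range $1\leq j\leq n_p$ omits it, and no composite of generators produces it: no generator increases the number of beads, so only a single codegeneracy could occur. The range must read $1\leq j\leq n_p+1$ for every bead. Your explanation that the first bead gets the extra index ``because $n_0$ may be $0$'' is therefore not the right reason.
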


\begin{remark}\label{freetypes}
\normalfont
1.  Morphisms of type $(i)$
are of the form (cf. Remark \ref{cartesiandecomp})
\begin{multline*}
\delta^{A_i|B_i}: I^{n_0}\vee I^{n_1} \vee ... \vee I^{n_i}\vee I^{n_i+1}  \vee... \vee  I^{n_{k}}
  \rightarrow\\
 I^{m_0}\vee I^{m_1} \vee ...\vee    I^{m_i} \vee... \vee  I^{m_{k-1}}
 \end{multline*}
for
$\delta^{A_i|B_i}:= id\vee ...\vee id \vee S^{A_i|B_i} \vee id\vee...\vee id$ with $ 0\leq i<k$
and
\[
A_i|B_i\in         \left\{
              \begin{array}{lll}
       \overline{P}'(m_0), & m_0=n_0+n_1 ,   &i=0 \vspace{1mm}\\
                  P(m_i),  &  m_i=n_i+n_{i+1}, &   1 \leq i<k,
              \end{array}
            \right.
\]
Thus
 there are exactly  $\underset{0\leq i< k_1}{\sum}\kappa(m_i)+1$
morphisms
$\delta^{A_i|B_i}: R \to R^\prime.$

\medskip
 \noindent
 2.  Morphisms of type $(i')$
 are of the  form
 \begin{multline*}
\delta^{C|D}_{op}: I^{n_0}\vee I^{n_1}   \vee... \vee  I^{n_{k}}
\xrightarrow{\approx}
 I^{n_{k}}\vee I^{n_0} \vee I^{n_1} \vee ...  \vee  I^{n_{k-1}}
\xrightarrow{ S^{C|D}\vee \,id }\\
  I^{m_0}\vee I^{n_1} \vee ... \vee  I^{n_{k-1}}
 \end{multline*}
for $C|D\in  \overline{P}''(m_{0}), \,  m_{0}=n_0+n_{k} .$
Thus,
 there are exactly  $\kappa(m_0)+1$
morphisms
$\delta^{C|D}_{op}: R \to R^\prime.$
\end{remark}

\vspace{0.2in}

\unitlength 0.7mm 
\linethickness{0.4pt}
\ifx\plotpoint\undefined\newsavebox{\plotpoint}\fi
\begin{picture}(64.39,56.434)(0,0)
\put(21.125,20.625){\circle*{1.031}}
\put(19.875,42){\circle*{1}}
\put(63.875,32.75){\circle*{1.031}}
\put(41.875,55.875){\circle*{1.118}}
\put(60.875,21.25){\circle*{1}}
\put(40.875,9.75){\circle*{1}}
\multiput(19.875,42.375)(.054115854,-.033536585){164}{\line(1,0){.054115854}}
\put(28.75,36.875){\line(0,-1){10.5}}
\put(28.75,26.375){\line(-4,-3){7.5}}
\multiput(28.625,36.875)(-.047619048,-.033730159){189}{\line(-1,0){.047619048}}
\multiput(19.625,30.5)(.03289474,-.25328947){38}{\line(0,-1){.25328947}}
\multiput(19.5,30.375)(-.047155689,.033682635){167}{\line(-1,0){.047155689}}
\multiput(11.625,36)(.044642857,.033653846){182}{\line(1,0){.044642857}}
\multiput(11.625,35.75)(.03289474,-.24671053){38}{\line(0,-1){.24671053}}
\multiput(12.875,26.375)(.046428571,-.033571429){175}{\line(1,0){.046428571}}
\multiput(61.625,42.5)(.03373016,-.15079365){63}{\line(0,-1){.15079365}}
\multiput(63.75,33)(-.03370787,-.13061798){89}{\line(0,-1){.13061798}}
\multiput(19.875,41.875)(.0328947,-.4868421){19}{\line(0,-1){.4868421}}
\multiput(28.625,26.5)(-.042467949,.033653846){156}{\line(-1,0){.042467949}}
\multiput(21.5,32.25)(-.0460526,.0328947){19}{\line(-1,0){.0460526}}
\multiput(12.875,26.25)(.036347518,.033687943){141}{\line(1,0){.036347518}}
\multiput(18.5,31.5)(.03571429,.03316327){49}{\line(1,0){.03571429}}
\multiput(19.75,42.375)(.033738938,.038163717){226}{\line(0,1){.038163717}}
\multiput(27.375,51)(.097315436,.033557047){149}{\line(1,0){.097315436}}
\multiput(41.875,56.25)(.0734375,-.03359375){160}{\line(1,0){.0734375}}
\multiput(53.625,50.875)(.033713693,-.034751037){241}{\line(0,-1){.034751037}}
\multiput(21.125,20.75)(.033653846,-.033653846){234}{\line(1,0){.033653846}}
\multiput(29,12.875)(.126344086,-.033602151){93}{\line(1,0){.126344086}}
\multiput(40.875,9.875)(.117788462,.033653846){104}{\line(1,0){.117788462}}
\multiput(53.125,13.375)(.033695652,.035326087){230}{\line(0,1){.035326087}}
\multiput(19.75,41.875)(.10218254,.033730159){126}{\line(1,0){.10218254}}
\multiput(32.625,46.125)(.0336363636,.0359090909){275}{\line(0,1){.0359090909}}
\multiput(41.875,56)(.033608491,-.046580189){212}{\line(0,-1){.046580189}}
\multiput(49,46.125)(.118055556,-.033564815){108}{\line(1,0){.118055556}}
\multiput(21.25,20.625)(.16197183,-.0334507){71}{\line(1,0){.16197183}}
\put(32.75,18.25){\line(0,1){.125}}
\multiput(32.75,18.375)(.0336734694,-.0352040816){245}{\line(0,-1){.0352040816}}
\multiput(41,9.75)(.033675799,.039383562){219}{\line(0,1){.039383562}}
\multiput(48.375,18.375)(.14389535,.03343023){86}{\line(1,0){.14389535}}
\put(32.5,46.25){\circle*{1}}
\put(49,46){\circle*{1}}
\put(53.625,50.75){\circle*{1}}
\put(48.5,18.5){\circle*{1}}
\put(53.125,13.375){\circle*{1}}
\put(33,18.25){\circle*{1}}
\put(29.125,12.875){\circle*{1}}
\put(28.625,36.875){\circle*{1}}
\put(28.875,26.625){\circle*{1}}
\put(20.375,33){\circle*{1}}
\put(11.75,36){\circle*{1}}
\put(13,26.375){\circle*{1}}
\put(19.75,30.375){\circle*{1}}
\put(61.5,42.625){\circle*{1}}
\put(27.375,50.875){\circle*{1}}
\end{picture}

\vspace{0.1in}

\begin{center}
Figure 2. A cubical closed necklace $\mathbf{I}^3\vee I^2\vee I^2\vee
I^1\vee I^1\vee I^2\vee I^2$ of dimension $7.$
\end{center}

\vspace{0.1in}

\section{(Closed)  cubical necklaces  and permutahedra}

Here we show that morphisms of (closed)  cubical necklaces  are closely related with the cell structure of permutahedra. We begin with recalling the definition of  permutahedron and its some properties.

\subsection{The permutahedra $P_n$}\label{permu}

The permutahedron $P_{n}$ is the convex hull of
  $n! $ vertices
  $( \sigma(1),...,\sigma(n))\! \in \!\mathbb{R}^{n}$ for
   $\sigma\in S_{n}.$ Let $P(n)$ denote (ordered) partitions of the set
   $\underline{n}=\{1,2,...,n\}.$
As a cellular
complex, $P_{n}$ is an $\left( n-1\right) $-dimensional convex polytope
whose $\left(
n-k\right) $-faces are indexed by  partitions $A_{1}|\cdots|A_{k}\in
P(n).$
One can define the permutahedra inductively as subdivisions of the
standard $n$-cube $I^{n}.$
Assign the label $\underline{1}$ to the single point $P_{1}.$ If
$P_{n-1}$ has been
constructed and $a=A_{1}|\cdots|A_{k}$ is one of its faces, form the
sequence
$\alpha_{\ast}=\left\{ \alpha_{0}=0,
\alpha_{1},\ldots,\alpha_{k-1},\alpha_{k}=\infty\right\} $ where
$\alpha_{j}=\#\left( A_{k-j+1}\cup\cdots\cup A_{k}\right) ,$ $1\leq
j\leq k-1$ and $\#$
denotes cardinality. Define the \emph{subdivision of} $I$
\emph{relative to} $a$ to be
\begin{equation*}
I/\alpha_{\ast}=I_{1}\cup I_{2}\cup\cdots\cup I_{k},
\end{equation*}
where $I_{j}=\left[
1-\frac{1}{2^{\alpha_{j-1}}},1-\frac{1}{2^{\alpha_{j}}}\right] $ and
$\frac{1}{2^{\infty}}=0.$ Then
\begin{equation*}
P_{n}=\bigcup\limits_{a\in P_{n-1}}a\times \, I/\alpha_{\ast}
\end{equation*}
with faces labeled as follows (see Figures 4 and 5):
\begin{equation*}
\begin{tabular}{c|cc}
\textbf{Face of }$\underset{\ }{a\times I/\alpha_{\ast}}$ &
\textbf{Partition of }
$\underline{n}$ &  \\ \hline
&  &  \\
$a\times0$ & $A_{1}|\cdots|A_{k}|n$ &  \\
&  &  \\
$a\times I_{j}$ & $A_{1}|\cdots|A_{k-j+1}\cup n|\cdots|A_{k}.$ &
\\
&  &  \\
$a\times(I_{j}\cap I_{j+1})$ & $A_{1}|\cdots|A_{k-j}|n|A_{k-j+1}|\cdots
|A_{k},$ & $1\leq j\leq k-1$  \\
&  &  \\
$a\times1$ & $n|A_{1}|\cdots|A_{k}$ &
\end{tabular}
\
\end{equation*}

\vspace{0.3in}

\begin{center}
\setlength{\unitlength}{0.0004in}
\begin{picture}
(2975,2685)(3126,-2038) \thicklines \put(3601,239){\line( 1,0){1800}}
\put(5401,239){\line( 0,-1){1800}} \put(5401,-1561){\line(-1, 0){1800}}
\put(3601,-1561){\line( 0,1){1800}}
\put(3601,239){\makebox(0,0){$\bullet$}}
\put(3601,-661){\makebox(0,0){$\bullet$}}
\put(3601,-1561){\makebox(0,0){$\bullet$}}
\put(5401,239){\makebox(0,0){$\bullet$}}
\put(5401,-661){\makebox(0,0){$\bullet$}}

\put(5401,-1561){\makebox(0,0){$\bullet$}}
\put(4500,-680){\makebox(0,0){$123$}}

\put(3300,-1800){\makebox(0,0){$_{1|2|3}$}}

\put(3150,-699){\makebox(0,0){$_{1|3|2}$}}

\put(3300,464){\makebox(0,0){$_{3|1|2}$}}

\put(5800,-1800){\makebox(0,0){$_{2|1|3}$}}

\put(5850,-699){\makebox(0,0){$_{2|3|1}$}}

 \put(5800,464){\makebox(0,0){$_{3|2|1}$}}

\put(3040,-1260){\makebox(0,0){$1|23$}}
 \put(4550,530){\makebox(0,0){$3|12$}}
\put(3040,-111){\makebox(0,0){$13|2$}}
\put(5960,-111){\makebox(0,0){$23|1$}}
\put(5960,-1260){\makebox(0,0){$2|13$}}
 \put(4550,-1890){\makebox(0,0){$12|3$}}
\end{picture}

\vspace{0.1in}

Figure 3. $P_{3}$ as a subdivision of $P_{2}\times I$.
\end{center}

\vspace{0.2in}

\begin{center}
\setlength{\unitlength}{0.00023in}
\begin{picture}
(7500,7500) \thicklines
\put(3000,4800){\line( 0,-1){4800}}
\put(3000,4800){\makebox(0,0){$\bullet$}}
\put(3000,2400){\makebox(0,0){$\bullet$}} \put(3000,0){\makebox
(0,0){$\bullet$}}
\put(3000,3600){\makebox(0,0){$\bullet$}}
\put(3000,0){\line( 1, 0){4800}}
\put(7800,0){\line( 0, 1){4800}}
\put(7800,4800){\line(-1, 0){4800}}
\put(7800,4800){\makebox(0,0){$\bullet$}}
\put(7800,3600){\makebox(0,0){$\bullet$}} \put(7800,2400){\makebox
(0,0){$\bullet$}}
\put(7800,0){\makebox(0,0){$\bullet$}}
\put(3000,2400){\line( 1, 0){4800}}
\put(0,6800){\line( 0,-1){4800}} \put(0,6800){\makebox(0,0){$\bullet$}}
\put(0,5600){\makebox(0,0){$\bullet$}}
\put(0,4400){\makebox(0,0){$\bullet$}}
\put(0,2000){\makebox(0,0){$\bullet$}}
\put(0,6800){\line( 1, 0){4800}}
\put(0,5600){\line( 1, 0){1200}} \put(2000,5600){\line( 1, 0){2800}}
\put(0,2000){\line( 1, 0){1200}} \put(1800,2000){\line( 1, 0){900}}
\put(3300,2000){\line( 1, 0){1500}}
\put(4800,5000){\line( 0,1){1700}} \put(4800,2600){\line( 0,1){2050}}
\put(4800,2000){\line( 0,1){200}}
\put(4800,6800){\makebox(0,0){$\bullet$}}
\put(4800,5600){\makebox(0,0){$\bullet$}} \put(4800,4400){\makebox
(0,0){$\bullet$}}
\put(4800,2000){\makebox(0,0){$\bullet$}}
\put(0,2000){\line( 3,-2){3000}}
\put(3000,4800){\line(-3,2){3000}}
\put(1500,5800){\line( 0,-1){4800}}
\put(1500,5800){\makebox(0,0){$\bullet$}}
\put(1500,4600){\makebox(0,0){$\bullet$}} \put(1500,3400){\makebox
(0,0){$\bullet$}}
\put(1500,1000){\makebox(0,0){$\bullet$}}
\put(3000,3600){\line(-3, 2){1500}}
\put(1500,3400){\line(-3, 2){1500}}
\put(6300,3400){\line(-3, 2){1500}}
\put(6300,5800){\line( 0,-1){800}} \put(6300,4600){\line( 0,-1){2000}}
\put(6300,2180){\line( 0,-1){1200}}
\put(6300,5800){\makebox(0,0){$\bullet$ }}
\put(6300,3400){\makebox(0,0){$\bullet$}} \put(6300,1000){\makebox
(0,0){$\bullet$}}
\put(6300,4600){\makebox(0,0){$\bullet$}}
\put(7800,3600){\line(-3, 2){1500}}
\put(4800,2000){\line(3, -2){3000}}
\put(4800,6800){\line( 3,-2){3000}}
 \put(5100,7300){\makebox(0,0){$_{4|3|2|1}$}}
 \put (2700,-500){\makebox(0,0){$_{1|2|3|4}$}}
 \put (8200,-500){\makebox(0,0){$_{2|1|3|4}$}}

\put (5600,-300){\makebox(0,0){$_{12|3|4}$}}

\put (7100, 1700.00){\makebox(0,0){$_{2|134}$}}

\put (7190, 4500.00){\makebox(0,0){$_{24|13}$}}

\put (5600, 3000.00){\makebox(0,0){$_{23|14}$}}

\put (5600, 5300.00){\makebox(0,0){$_{234|1}$}}
\end{picture}
\vspace{0.3in}

Figure 4. $P_{4}$ as a subdivision of $P_{3}\times I.$
\end{center}

\vspace{0.1in}

A \emph{cubical }vertex of $P_{n}$ is a vertex common to both $P_{n}$
and $
I^{n-1}.$ Note that $a$ is a cubical\ vertex of $P_{n-1}$ if and only
if $
a|n $ and $n|a$ are cubical  vertices of $P_{n}.$ Precisely, $a$ is of
the form
 $a=a_1|...|a_{i-1}|1|a_{i+1}|...|a_{n}  $ with $a_1>\cdots >a_{i-1}$
 and $a_{i+1}<\cdots <a_{n}.$

\vspace{0.2in}

\subsection{The cellular projection $\varsigma_n$}\label{varsigman}

To define the model of the  free loop fibration (cf. Theorem \ref{freeloopmodel})
we need to fix a cellular projection
\begin{equation}\label{varsigma}
\varsigma_n: P_{n+1}\rightarrow I^n
\end{equation}
as follows. Given a vertex
 $a=a_1|...|a_i|1|a_{i+1}|...|a_{n}\in P_{n+1},$ let
  \[\varsigma_n(a)=   b_1|...|b_i|1|b_{i+1}|...|b_{n}\]
   be a
cubical vertex with
 $b_1>\cdots >b_i$ obtained from the set $a_1,...,a_i$ by ordered it decreasingly   and $b_{i+1}<\cdots <b_{n}$ from the set $a_{i+1},...,a_n$   ordered it increasingly.   In particular,  the cells $  1|(\underline{n+1}\smallsetminus 1) $  and $ (\underline{n+1}\smallsetminus 1) | 1 $ are degenerate  as well as all codim 1 cells unless
the codim 1  cells   of the form
 $  a_i|(\underline{n+1}\smallsetminus a_i) $  and $ (\underline{n+1}\smallsetminus a_i) | a_i $  for $a_i\neq 1.$ Precisely,
 for a  $k$ -- subcube $ u\subset I^n,\,   u=d^{\epsilon}_C(I^n),\, C=\{c_1,...,c_{n-k}\}, $ there is a unique cell $u(\epsilon)\subset P_{n+1}$
 with $\varsigma_n(u(\epsilon))=u,$ where
 \begin{equation}\label{cubcell}
    u(\epsilon)=
 \begin{cases}
 \{1\}\cup  (\underline{n}\smallsetminus C) \mid c_1+1|\cdots |c_{n-k}+1 , & \epsilon=0, \\
   c_1+1|\cdots|c_{n-k}+1\mid \{1\}\cup  (\underline{n}\smallsetminus C), & \epsilon=1.
 \end{cases}
 \end{equation}
Furthermore, let  $ (I^n)^{I}$  be  the space of
all continues  maps from  the interval $I=[0,1]$ to the $n$-cube $I^n,$ and let
$P_{0,n}(I^n) \subset (I^n)^{I}$ be the subspace of maps
 with  $f(0)=\min I^n$ and  $f(1)=\max
I^n.$ Fix a cellular homeomorphism $\chi :P_{n}\times I\rightarrow P_{n+1}$
such that $\chi(P_n\times 0)=1|\underline{n}$   and $\chi(P_n\times 1)=\underline{n}|1 .  $
Then  by the exponential law  the composition
  $ \varsigma_n\circ \chi : P_n\times I\rightarrow I^n$
   induces  a map
\begin{equation}\label{omegan}
\omega_{n}: P_{n}\rightarrow  P_{0,n}(I^n) \subset (I^n)^{I}
\end{equation}
in which  $\omega_1(P_1)$ is the identity $I^1 \rightarrow I^1.$

\subsection{The diagonal of permutahedra}\label{DP}

Here we describe a combinatorial diagonal of permutahedra (cf. \cite{SU})
\[  \Delta_P: P_n \rightarrow P_n\times P_n    .\]
Given  a cell $e\subset P_n,$ denote the
set of vertices of $e$ by $\mathcal{V}_{e}.$ Hence
$\mathcal{V}_{e}\subset S_{n}.$ As a vertex $v$ of the standard  $n$ -- cube
defines a unique component $a\times b$  of the cubical diagonal   by $\max a=v=\min b,\, |a|+|b|=n,$
a vertex $\sigma \in \mathcal{V}_{e}$ determines a unique subset $A_\sigma\times B_\sigma$ of components of the diagonal
$\Delta_{P}(e)$ called  \emph{Complementary Pairs} (CP's). Namely,
let $\sigma=x_{1}|\cdots|x_{n}.$ Think of $\sigma$ as an ordered
sequence
of positive integers, construct two elements  $\overleftarrow{\sigma_1}
|\cdots|\overleftarrow{\sigma_p}$ and $\overrightarrow{\sigma_q}
|\cdots|\overrightarrow{\sigma_1}$ of $P(n)$ where
 $\overleftarrow{\sigma_j}$  and $\overrightarrow{\sigma_i}$
denote  $j^{th}$ decreasing and $i^{th}$ increasing subsequence of
maximal length of $\sigma$ respectively.
First,  form
the \emph{Strong Complementary Pair }(SCP)
\[
a_{\sigma}\times b_{\sigma}:=\overleftarrow{\sigma_1}|\cdots
|\overleftarrow{\sigma_p}\times\overrightarrow{\sigma_q}|\cdots
|\overrightarrow{\sigma_1}\in A_\sigma\times B_\sigma.
\]
Then proceed  as follows. Let $a=A_{1}|\cdots|A_{p}\in P(n)$. For $1\leq j<p,$ choose a subset
$M_{j}\subseteq(A_{j}\smallsetminus\{\min A_{j}\})$ such that $\min
M_{j}>\max
A_{j+1}$ when $M_j\neq \varnothing,$ and  define the \emph{right-shift
}$M_{j}$
\emph{action}
\[
R_{M_{j}}(a):=
A_{1}|\cdots|A_{j}\smallsetminus M_{j}|A_{j+1}\cup M_{j}|\cdots|A_{p}
\ \ \text{with} \ \  R_{\varnothing}=Id. \]
Let $\textbf{M}:=(M_{1},M_{2} ,\ldots,M_{p-1}),$  and denote by $R_{\mathbf{M}}\left(a\right)$  the composition  
\[  R_{\mathbf{M}}\left(a\right):={R}_{M_{p-1}}\cdots R_{M_{2}}R_{M_{1}}(a).
\] 

Dually, let $b=B_{q}|\cdots|B_{1}\in P(n).$ For $1\leq i<q,$ choose a
subset
$N_{i}\subseteq(B_{i}\smallsetminus\linebreak\{\min B_{i}\})$ such that
$\min N_{i}>\max
B_{i+1}$ when $N_{i}\neq\varnothing,$   and define the \emph{left-shift
}$N_{i}
$\emph{ action}
\[
L_{N_{i}}(b):=
B_{q}|\cdots|B_{i+1}\cup N_{i}|B_{i}\smallsetminus N_{i}|\cdots|B_{1}\ \
\text{with} \ \  L_{\varnothing}=Id.
\]
Let \textbf{N}$:=(N_{1},N_{2}
,\ldots,N_{q-1}),$ and denote by    $L_{\mathbf{N}}\left(  b\right) $   the composition 
\[
L_{\mathbf{N}}\left(  b\right):= L_{N_{q-1}}\cdots L_{N_{2}}
L_{N_{1}}(b).
\]
Define
\[
A_{\sigma}\times B_{\sigma}=\bigcup\limits_{\mathbf{M,N}}\left\{
{R}_{\mathbf{M}}(a_{\sigma})\times L_{\mathbf{N}}\left(
b_{\sigma}\right)
\right\},
\]
and then
\begin{equation}\label{DeltaSet}
\Delta_{P}(e)=\bigcup_{\sigma\in\mathcal{V}_{e}}A_{\sigma}\times
B_{\sigma}.
\end{equation}
For example, on the top dimensional cell $e^{2}$ of $P_{3}$,
$\Delta_{P}(e^{2})$ is the
union of
\[
\hspace{-0.1in}
\begin{array}
[c]{ll}%
A_{1|2|3}\times B_{1|2|3}=\left\{  1|2|3\times123\right\}  , &
A_{1|3|2}\times
B_{1|3|2}=\left\{  1|23\times13|2\right\}  ,\\
A_{2|1|3}\times B_{2|1|3}=\left\{  12|3\times2|13,\text{ }12|3\times
23|1\right\}  , & A_{2|3|1}\times B_{2|3|1}=\{2|13\times23|1\},\\
A_{3|1|2}\times B_{3|1|2}=\{13|2\times3|12,\text{ }1|23\times3|12\}, &
A_{3|2|1}\times B_{3|2|1}=\{123\times3|2|1\}.
\end{array}
\]

To lift the diagonal on the chain level, let
$(C_*(P_n), d)$ be
 the cellular chain complex of $P_n$ where $d$ is defined for the top
 cell   $e^{n-1}$ by
\begin{equation}\label{Psign}
 d(e^{n-1})=
\sum_{A|B\in P(n) }(-1)^{\# A} sgn(A,B)\,\, A|B,
\end{equation}
and for  proper cells  $A_1|...|A_k\subset P_n$ is extended as a
derivation
\[d(A_1|...|A_k)=\sum_{1\leq r \leq k} -(-1)^{\#(A_1\cup...\cup A_{r-1})+r}
A_1|...|A_{r-1}|d(A_r)|A_{r+1}|...|A_k . \]
Then (\ref{DeltaSet}) induces the coproduct
$\Delta_P:C_*(P_n)\rightarrow C_*(P_n)\otimes C_*(P_n) $ by
\begin{equation}
\label{DeltaP}
\Delta_P(e)=\sum_{\substack{(a,b)\in ( A_\sigma,B_\sigma ) \\ \sigma\in
\mathcal{V}_\sigma  }} sgn( a,b)\,\,   a\otimes b.
\end{equation}
Note that if $e=P_{n_1}\times \cdots \times P_{n_k}$ is a proper cell of $P_n,$
then
$\Delta_P(e)$ is automatically the comultiplicative extension on
the monomial $C_\ast(P_{n_1})\otimes \cdots \otimes C_\ast(P_{n_k}).$
Thus,
$(C_\ast(P_n), d,\Delta_P)$ is a dg (non-coassociative) coalgebra.

\subsection{Comparison of the diagonals of permutahedra and cube via the projection $\varsigma_n$}\label{cubicalc}
 The cellular projection $\varsigma_n:P_{n+1}\rightarrow I^n$
given by (\ref{varsigma}) induces the map of dg coalgebras
\[   C( \varsigma_n ): C_\ast(P_{n+1})\rightarrow C_\ast(I^n).   \]
More precisely,  for any component
$u\otimes v\in \Delta_{_{\Box}}(I^n)$ the pair $u(0)\otimes v(1)$ given by
(\ref{cubcell})
is automatically  SCP, and, hence, is a component of
$ \Delta_P(P_{n+1}).$

\subsection{The two kinds of correspondence between morphisms of
cubical necklaces and cells of permutahedra}

The above combinatorial description of $P_n$ immediately implies the
following propositions. Let $P(A)$ denote the set of
partitions of a finite set $A.$
\begin{proposition}\label{basedloops}
 For  $k,n\geq2,$ there is a canonical bijection $g_\Omega$ between
 the injective morphisms
of cubical  necklaces $f: T=I^{n_1}\vee...\vee I^{n_{k}}\to I^n=
T^\prime$ and the $(n-k)$-dimensional cells  of $P_{n}.$
\end{proposition}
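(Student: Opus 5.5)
Injective necklace morphisms $f:T\to I^n$ with $k$ beads will be encoded by ordered partitions of $\underline{n}$ into $k$ blocks. Since $f$ is a cubical set map preserving first and last vertices and is injective, each bead $I^{n_i}$ goes isomorphically onto a face of $I^n$. So $f$ is determined by a chain of vertices
\[\min I^n=x_0<x_1<\cdots<x_k=\max I^n,\]
where bead $i$ maps onto the unique face of $I^n$ with minimal vertex $x_{i-1}$ and maximal vertex $x_i$. Such a face is spanned by the coordinate directions in which $x_{i-1}$ and $x_i$ differ. Call this set of directions $A_i\subset\underline{n}$; it has $\#A_i=n_i$.

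First step: the sets $A_1,\dots,A_k$ are pairwise disjoint, nonempty (since $n_i\ge1$), and cover $\underline{n}$. This holds because moving from $\min I^n$ to $\max I^n$ raises each coordinate exactly once along the monotone chain. Conversely, any ordered partition $A_1|\cdots|A_k\in P(n)$ gives the vertex chain $x_j=\chi_{A_1\cup\cdots\cup A_j}$, and hence an injective necklace morphism whose beads map to faces of dimensions $\#A_i$. These two assignments are mutually inverse. One point needs a check: whether $f$ might compose a bead with a nontrivial cube automorphism. Cubical set maps $I^m\to I^n$ onto a top face are determined by the face inclusion, because cubical sets here carry no symmetries. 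So $f$ is determined by its vertex chain, and $g_\Omega(f):=A_1|\cdots|A_k$ is well defined and bijective.

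Second step: compare dimensions. By Section \ref{permu}, faces of $P_n$ indexed by $A_1|\cdots|A_k$ have dimension $n-k$. This matches $\dim(T)=\sum n_i-k=n-k$, so $g_\Omega$ lands in the $(n-k)$-cells as claimed. The restriction $k\ge 2$ only excludes the identity morphism, which corresponds to the top cell.

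Third step, and the real content of the word ``canonical'': check that $g_\Omega$ is compatible with structure, so that later arguments can use it. The generators of type (i) from Remark \ref{cartesiandecomp} are $\delta^{A_i|B_i}$, which split a bead $I^{m_i}$ via the Serre diagonal term $A_i|B_i$. Under $g_\Omega$, composing with $\delta^{A_i|B_i}$ should correspond to refining a block of the partition into two blocks, i.e. to passing to a codimension-one face of the cell $A_1|\cdots|A_k$. I expect the main obstacle to be the ordering convention. The Serre diagonal's pair $u_B\otimes u_A$ corresponds to the vertex $\max u_B=\min u_A$, so the bead mapped first uses the directions in $B$, as $d^0_B$ fixes them at $0$... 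This has to be matched against the partition order in $P(n)$ and the face labelling given by the subdivision $P_n=\bigcup a\times I/\alpha_\ast$. I would fix the convention by checking $n=2,3$ against Figure 3 (e.g.\ $1|2$ and $2|1$ correspond to the two vertex paths of $I^2$), and then argue inductively on $n$.
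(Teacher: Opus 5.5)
Your argument is correct. It produces the same bijection as the paper, but by a different route. The paper factors $f$ into type (i) generators, $f=\delta^{A_{k-1}|B_{k-1}}\circ\cdots\circ\delta^{A_1|B_1}$ with $A_1|B_1\in P(n)$ and $A_i|B_i\in P(B_{i-1})$. In effect it repeatedly splits the last bead by a Serre-diagonal term, and it reads off $g_\Omega(f)=A_1|\cdots|A_{k-1}|B_{k-1}$. The paper takes this factorization for granted (via Proposition \ref{maps1}) and never says why $g_\Omega$ is bijective.

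You read the partition directly off the monotone vertex chain $x_0<\cdots<x_k$. This lets you actually prove bijectivity. The key points are that an injective cubical map $I^{m}\to I^n$ must be a face inclusion, since there are no symmetries and degeneracies are excluded. Conversely, the faces attached to an ordered partition meet only at consecutive wedge vertices, so the reconstructed map is injective.

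The paper's route has one advantage. Compatibility with the face structure, which is what is later used to match the permutahedral differential with the cobar differential, is built into its definition. In your version this is the third step, and it is immediate rather than an obstacle. Precomposing $f$ with $\delta^{A_i|B_i}$ replaces the block $S$ of bead $i$ by the two blocks corresponding to $A_i$ and $B_i$ under the order-preserving identification $S\cong\underline{m_i}$. This is exactly passing to a codimension-one face of the cell.

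Your worry about conventions is unfounded. Since $u_B=d^0_B(u)$ fixes the coordinates in $B$ at $0$, its free directions are $A$, not $B$. So the first bead of $S^{A|B}$ uses the directions in $A$, and $g_\Omega(\delta^{A|B})=A|B$. This agrees with your rule that block $i$ is the set of free directions of bead $i$; compare Figure 5, where $1|2\leftrightarrow[x0][1x]$ and $12|3\leftrightarrow[xx0][11x]$. No inductive check on $n$ is needed.
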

\begin{proof}
We have  $f=\delta^{A_{k-1}|B_{k-1}}\circ \cdots \circ
\delta^{A_1|B_1}$ where     $A_1|B_1\in P(n)$  and    $A_i|B_i\in
P(B_{i-1})$  for $i\geq 2.$
The map $g_\Omega$  is defined by
\[  g_\Omega(f) = A_1| \cdots |A_{k-1}| B_{k-1} \subset P_{n}. \]
\end{proof}
In particular, for $k=2$   and $A|B\in P(n),$  there is the bijection
 (see Figure 5)
\[
g_\Omega: \{\delta^{A|B} \}  \longleftrightarrow \{\text{codimension
1 cells}\
A|B\ \text{of}\ \ P_{n} \}.
\]
Given  a subset  $A=(a_1,...,a_m)\subset \underline{n}$  for $1\leq m\leq n$ and an integer $k,$   denote $A+k:=(a_1+k,..,a_m+k).$    We also have 
\begin{proposition} \label{freeloops}
For $n>n_0\geq 0$ and necklaces $T_1$ with $b(T_1)=k$  and $T_2$ with $b(T_2)=\ell,$
let  $f:I^{n_0}\vee T_1\vee I^1\vee T_2 \rightarrow I^n\vee I^1$  be a morphism of closed necklaces  (where $k=0$ when $T_1=\varnothing $ and $\ell=0,$ when $T_2=\varnothing $).
 Then
there is a canonical bijection $g_\Lambda$
between morphisms $f$
 and  $(n-k-\ell)$-dimensional cells  of
$P_{n+1}.$
\end{proposition}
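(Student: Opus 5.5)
The plan is to mimic the proof of Proposition \ref{basedloops}: decompose $f$ canonically into generating morphisms of Proposition \ref{maps2}, and read off from the labels of this decomposition a partition of $\underline{n+1}$ with $k+\ell+1$ blocks, which indexes an $(n-k-\ell)$-dimensional cell of $P_{n+1}$.

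\textbf{Step 1 (normal form).} Since $f$ is a morphism of closed necklaces with target $I^n\vee I^1$, it has no degeneracies or collapses. Hence $f$ is injective and, by Proposition \ref{maps2}, is a composite of morphisms of types (i), (i') and (i''). I would first show that these can be reordered into a canonical order:
\begin{itemize}
\item first the beads of $T_2$ are merged among themselves, by type (i'') morphisms, which are of type (i) of Proposition \ref{maps1};
\item the resulting bead is then merged into the first bead through the "wrap-around" morphism $\delta^{C|D}_{op}$ of type (i'), described in Remark \ref{freetypes}.2;
\item finally the beads of $T_1$ are absorbed successively into the first bead by the morphisms $\delta^{A_i|B_i}$ of type (i) with $i=0$.
\end{itemize}
The bead $I^1$ is always kept as the last bead of the target. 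Exactly as in Proposition \ref{basedloops}, the resulting factorization is unique once this order is fixed. The reason is that each injective map $S^{A|B}$ is determined by its image, and that image is a wedge of two faces of the cube.

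\textbf{Step 2 (definition of $g_\Lambda$).} The factorization yields:
\begin{itemize}
\item a chain of partitions $A_1|B_1, A_2|B_2,\dots$ coming from $T_1$, with $A_1|B_1\in\overline{P}'(n)$ and $A_{i}|B_{i}\in P(B_{i-1})$;
\item a partition $C|D\in\overline P''$ coming from the wrap-around;
\item a chain of partitions inside $D$ (or $C$) coming from $T_2$.
\end{itemize}
I then shift all subsets by $+1$ and insert the distinguished element $1$, which records the $I^1$-bead, i.e.\ the loop parameter. Following formula (\ref{cubcell}), which singles out $\{1\}\cup(\cdot)$ as the block separating the "$\epsilon=0$" part from the "$\epsilon=1$" part, I define
\[g_\Lambda(f)=(D_{\ell}+1)|\cdots|(D_1+1)\,\big|\,\{1\}\cup (B_k+1)\,\big|\,(A_k+1)|\cdots|(A_1+1)\]
(or its mirror image, depending on orientation conventions). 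The blocks coming from $T_2$ are placed to the left of the block containing $1$, and those from $T_1$ to its right. Counting blocks gives $k+\ell+1$, so the cell has dimension $(n+1)-(k+\ell+1)=n-k-\ell$, as required.

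\textbf{Step 3 (bijectivity).} I construct the inverse map. Given a partition $E_1|\cdots|E_r$ of $\underline{n+1}$, let $E_j$ be the block containing $1$. The blocks after $E_j$, read from the right, recover the $\delta^{A_i|B_i}$ of $T_1$. The blocks before $E_j$ recover the $T_2$ chain together with the wrap-around datum. Bead sizes are forced by the cardinalities of the blocks, and $n_0=\#E_j-1$.

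The main obstacle is the edge cases allowed by $\overline P'$ and $\overline P''$: $n_0=0$, $T_1$ or $T_2$ empty, and the extreme partitions $\varnothing|\underline m$ and $\underline m|\varnothing$. One has to check that these correspond exactly to $E_j=\{1\}$ and to $E_j$ being the first or last block, with no double counting, in particular when $k=\ell=0$, where $f$ is the identity-like inclusion and should correspond to the top cell. I would verify this carefully using the count in Remark \ref{freetypes} ($\kappa(m_0)+1$ morphisms) against the number of codimension-one cells of $P_{n+1}$ containing $1$ in a given position, starting with the cases $n=1,2$ as sanity checks.
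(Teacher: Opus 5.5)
Your strategy is the paper's: factor $f$ into the face-type generators of Proposition~\ref{maps2}, then read off an ordered partition of $\underline{n+1}$ in which $1$ marks the block of the first bead, with the $T_2$-blocks to its left and the $T_1$-blocks to its right. The one difference is how $T_2$ is absorbed. The paper uses $\ell$ successive wrap-arounds $\delta^{C_1|D_1}_{op},\dots,\delta^{C_\ell|D_\ell}_{op}$, each cutting one $T_2$-bead off the current first bead. You instead merge $T_2$ into a single bead by maps of type (i'') and wrap around once. Both give the same data: your wrapped set is $C_1\cup\cdots\cup C_\ell$, subdivided by a chain as in Proposition~\ref{basedloops}. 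Your Step~3 also supplies the inverse, which the paper leaves implicit.

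Your explicit formula in Step~2 needs repair, however.

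\textbf{The $A|B$ roles are swapped.} With the paper's convention $d_{A|B}(u,y)=(d^0_B(u),\overline{d^1_A(u)}\cdot y)$, the coordinates that stay in the first bead are $A$. This is why $\overline{P}'$ admits $\varnothing|\underline{m}$: the first bead may shrink to a vertex, but a split-off bead may not be empty. Accordingly $g_\Lambda(\delta^{A|B})=(A+1)\cup 1\mid (B+1)$. You let the first bead carry $B$, so with your labels $\overline{P}'$ admits the wrong extreme partition: it allows an empty $T_1$-block instead of a vertex as first bead.

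\textbf{The block containing $1$ is wrong.} It must be $\{1\}\cup(D+1)$, where $D$ is what remains of the first-bead set after the wrap-around $C|D\in\overline{P}''$ (possibly $D=\varnothing$). The $T_2$-blocks then subdivide $C$. You put the whole first-bead set $B_k$ into that block, so your $T_2$-blocks overlap it, and the formula is not a partition of $\underline{n+1}$ when $\ell\geq 1$. The corrected formula is the paper's
$(C_1+1)|\cdots|(C_\ell+1)|(D_\ell+1)\cup 1\mid (B_k+1)|\cdots|(B_1+1)$.

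\textbf{Injectivity must be assumed, not derived.} It does not follow from the target being $I^n\vee I^1$: a bead may be sent to a degenerate cube (cf.\ the generators (ii), (ii') of Proposition~\ref{maps2}). It has to be a hypothesis, as the paper tacitly does by starting from a factorization into face-type morphisms.

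Finally, for injective $f$ the condition $n>n_0$ rules out $k=\ell=0$, so the top cell of $P_{n+1}$ is not in the range.
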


\begin{proof}
A  map $f$ factors as the composition
\[
f=  \delta^{C_{\ell}|D_{\ell}}_{op}\circ \cdots \circ
\delta^{C_{1}|D_{1}}_{op}\circ \delta^{A_{k}|B_{k}}\circ \cdots
\circ \delta^{A_1|B_1}\ \ \text{with}\ \  C_1|D_1\in \overline{P}''(A_k) \]
and then
  \begin{equation}\label{fbi}
   g_\Lambda(f)=
  (C_{1}+1)|\cdots  | (C_{\ell}+1)|(D_{\ell}+1)\cup 1\mid
  ( B_{k}+1)| \cdots |(B_{1}+1)\in P_{n+1}.
  \end{equation}
  \end{proof}
\noindent In particular,  for $k+\ell=1$   and $A|B\in \overline P(n),$ there is the
bijection (see Figure 6)
\begin{multline*}
  g_\Lambda: \{ \delta^{A|B}\cup \delta_{op}^{C|D} \}
\longleftrightarrow\\
 \{\text{codimension 1 cells}\ \
   (A+1)|( B+1)\cup 1\,\bigcup \,(A+1)\cup 1\mid  (B+1) \  \  \text{of}\ \ P_{n+1} \}
\end{multline*}
given by
\[
\begin{array}{llllllll}
g_\Lambda\left(\delta^{A|B}\right) = \left\{\begin{array}{llll}
                  (A+1)\cup 1\mid (B+1) &\subset P_{n+1}, & A|B \in P(n),
                  \\
                 1 \mid (\underline{n}+1) & \subset
                 P_{n+1}, & A|B=\varnothing|\,\underline{n},
               \end{array}
               \right.
              \vspace{2mm} \\
            \!\!   g_\Lambda\left({\delta}^{C|D}_{op}\right)=\left\{
\begin{array}{llll}
    ( C+1)\mid (D+1)\cup 1      & \subset P_{n+1} , &C| D\in P(n), \\
   \, (\underline{n}+1)\mid 1   & \subset P_{n+1}, &
   C|D =\underline{n}\,|\varnothing.
\end{array}
\right.
\end{array}
\]

\newpage
 \hspace{-1.0in}
\unitlength 1mm 
\linethickness{0.4pt}
\ifx\plotpoint\undefined\newsavebox{\plotpoint}\fi
\begin{picture}(129.625,85.75)(0,0)
\put(35.125,45){\line(0,1){22.75}}
\put(94,6.25){\line(0,1){22.75}}
\put(35.125,67.75){\line(1,0){27.75}}
\put(94,29){\line(1,0){27.75}}
\put(35.125,83.375){\line(1,0){27.75}}
\put(94,56.625){\line(1,0){27.75}}
\put(62.875,67.75){\line(0,-1){22.375}}
\put(121.75,29){\line(0,-1){22.375}}
\put(62.875,45.375){\line(-1,0){27.625}}
\put(121.75,6.625){\line(-1,0){27.625}}
\put(35.25,45.375){\circle*{1.118}}
\put(94.125,6.625){\circle*{1.118}}
\put(35.25,67.625){\circle*{1}}
\put(94.125,28.875){\circle*{1}}
\put(35.25,83.25){\circle*{1}}
\put(94.125,56.5){\circle*{1}}
\put(94.125,82.5){\circle*{1}}
\put(62.75,67.625){\circle*{1}}
\put(121.625,28.875){\circle*{1}}
\put(62.75,83.25){\circle*{1}}
\put(121.625,56.5){\circle*{1}}
\put(62.875,45.5){\circle*{1}}
\put(121.75,6.75){\circle*{1}}
\put(121.75,18.25){\circle*{1}}
\put(94,17.5){\circle*{1.031}}

\put(108.2,3.375){\makebox(0,0)[cc]{$[xx0][11x]$}}
\put(130.8 ,12.125){\makebox(0,0)[cc]{$[0x0][x1x]$}}
\put(130.8 ,23.5){\makebox(0,0)[cc]{$[0xx][x11]$}}
\put(108.3 ,32.5){\makebox(0,0)[cc]{$[00x][xx1]$}}
\put(84.7  ,23.5){\makebox(0,0)[cc]{$[x0x][1x1]$}}
\put(84.7  ,11.625){\makebox(0,0)[cc]{$[x00][1xx]$}}

\put(131.5 ,5.5){\makebox(0,0)[cc]{$_{[0x0][x10][11x]}$}}
\put(131.5,18.125){\makebox(0,0)[cc]{$_{[0x0][0x1][x11]}$}}
\put(131.5,29.5){\makebox(0,0)[cc]{$_{[00x][0x1][x11]}$}}

\put(84.0, 5.5){\makebox(0,0)[cc]{$_{[x00][1x0][11x]}$}}
\put(84.0, 17.5){\makebox(0,0)[cc]{$_{[x00][10x][1x1]}$}}
\put(84.0, 29.5){\makebox(0,0)[cc]{$_{[00x][x01][1x1]}$}}

\put(70.875,56.5){\vector(1,0){10.125}}
\put(70.875,82.875){\vector(1,0){10.125}}
\put(64.0 ,18){\vector(1,0){6.5}}

\put(97.7,  23){\makebox(0,0)[cc]{$13|2$}}
\put(107.3,26){\makebox(0,0)[cc]{$3|12$}}
\put(117.7,23){\makebox(0,0)[cc]{$23|1$}}
\put(117.5,12.5){\makebox(0,0)[cc]{$2|13$}}
\put(107.3,9){\makebox(0,0)[cc]{$12|3$}}
\put(97.7,12.5){\makebox(0,0)[cc]{$1|23$}}
\put(98,17.625){\makebox(0,0)[cc]{$_{1|3|2}$}}
\put(98,27.25){\makebox(0,0)[cc]{$_{3|1|2}$}}
\put(118.3,27.25){\makebox(0,0)[cc]{$_{3|2|1}$}}
\put(118.3,17.5){\makebox(0,0)[cc]{$_{2|3|1}$}}
\put(118.3,8){\makebox(0,0)[cc]{$_{2|1|3}$}}
\put(98,8){\makebox(0,0)[cc]{$_{1|2|3}$}}

\put(94.125,79.625){\makebox(0,0)[cc]{$1$}}
\put(47.875,86.2){\makebox(0,0)[cc]{$I^1$}}
\put(94.375,85.75){\makebox(0,0)[cc]{$P_1$}}

\put(35,70){\makebox(0,0)[cc]{$01$}}
\put(35,80.875){\makebox(0,0)[cc]{$0$}}
\put(62.625,70){\makebox(0,0)[cc]{$11$}}
\put(62.625,80.875){\makebox(0,0)[cc]{$1$}}
\put(108, 59.5){\makebox(0,0)[cc]{$P_2$}}
\put(48.75,57.125){\makebox(0,0)[cc]{$I^2$}}
\put(108, 18.0){\makebox(0,0)[cc]{$P_3$}}
\put(48,47.75){\makebox(0,0)[cc]{$x0$}}
\put(48.125,65.25){\makebox(0,0)[cc]{$x1$}}
\put(38.5 ,56.625){\makebox(0,0)[cc]{$0x$}}
\put(59.5 ,56.625){\makebox(0,0)[cc]{$1x$}}

\put(62.75,43){\makebox(0,0)[cc]{$10$}}

\put(34.75,43){\makebox(0,0)[cc]{$00$}}

\put(94.125,59.5){\makebox(0,0)[cc]{$1|2$}}
\put(94.125,53.5){\makebox(0,0)[cc]{$[x0][1x]$}}
\put(121.375,59.5){\makebox(0,0)[cc]{$2|1$}}
\put(121.375,53.5){\makebox(0,0)[cc]{$[0x][x1]$}}

\put(0,0){}
\put(47.068,17.396){\line(0,-1){13.75}}
\put(47.068,3.646){\line(6,5){12}}
\put(59.068,13.646){\line(0,1){14.25}}
\put(59.068,27.896){\line(-1,0){19.875}}
\put(26.693,3.896){\line(1,0){20.375}}
\put(47.068,17.771){\line(-1,0){20.375}}

\multiput(39.193,27.771)(-.0416666667,-.0336700337){297}
{\line(-1,0){.0416666667}}

\put(26.818,17.771){\line(0,-1){13.75}}
\put(58.943,13.771){\line(-1,0){11.375}}
\put(46.443,13.771){\line(-1,0){7}}
\put(39.443,27.896){\line(0,-1){9.625}}
\put(39.443,17.396){\line(0,-1){3.5}}

\multiput(39.443,13.896)(-.0425084175,-.0336700337){297}
{\line(-1,0){.0425084175}}

\put(46.943,17.646){\circle*{1}}
\put(47.068,3.771){\circle*{1}}
\put(59.068,14.021){\circle*{1}}
\put(59.193,27.771){\circle*{1}}
\put(39.443,13.771){\circle*{1}}
\put(39.318,27.896){\circle*{1}}
\put(26.943,17.771){\circle*{1}}
\put(26.818,4.021){\circle*{1}}
\put(26.318,1.771){\makebox(0,0)[cc]{$000$}}
\put(58.943, 30){\makebox(0,0)[cc]{$111$}}
\put(43.818,21.896){\makebox(0,0)[cc]{$I^3$}}

\multiput(47,17.5)(.039184953,.0336990596){319}
{\line(1,0){.039184953}}

\end{picture}

\vspace{0.3in}

 \begin{center}
 Figure 5.  The correspondence between the diagonal components of the
 cube
 $I^n:=(x\cdots x)$ (without the primitive terms)
 and   the cells of the permutahedron $[x\cdots x]:=P_n:=\underline{n}$
 for $n=1,2,3.$
\end{center}

\vspace{0.5in}
\hspace{-1.2in}
 \unitlength 1mm 
\linethickness{0.4pt}
\ifx\plotpoint\undefined\newsavebox{\plotpoint}\fi
\begin{picture}(129.625,61)(0,0)
\put(35.125,6.25){\line(0,1){22.75}}
\put(94,6.25){\line(0,1){22.75}}
\put(35.125,29){\line(1,0){27.75}}
\put(94,29){\line(1,0){27.75}}
\put(35.125,45.125){\line(1,0){27.75}}
\put(94,45.125){\line(1,0){27.75}}
\put(62.875,29){\line(0,-1){22.375}}
\put(121.75,29){\line(0,-1){22.375}}
\put(62.875,6.625){\line(-1,0){27.625}}
\put(121.75,6.625){\line(-1,0){27.625}}
\put(35.25,6.625){\circle*{1.118}}
\put(94.125,6.625){\circle*{1.118}}
\put(35.25,28.875){\circle*{1}}
\put(94.125,28.875){\circle*{1}}
\put(35.25,45){\circle*{1}}
\put(94.125,45){\circle*{1}}
\put(35.25,60.5){\circle*{1}}
\put(94.125,60.5){\circle*{1}}
\put(62.75,28.875){\circle*{1}}
\put(121.625,28.875){\circle*{1}}
\put(62.75,45){\circle*{1}}
\put(121.625,45){\circle*{1}}
\put(62.875,6.75){\circle*{1}}
\put(121.75,6.75){\circle*{1}}
\put(121.75,18.25){\circle*{1}}
\put(94,17.5){\circle*{1.031}}

\put(108.125,3.375){\makebox(0,0)[cc]{$x0][1x]$}}

\put(128.625,12.125){\makebox(0,0)[cc]{$1x][x0]$}}

\put(128.625,23.5){\makebox(0,0)[cc]{$11][xx]$}}

\put(108.125,32.5){\makebox(0,0)[cc]{$x1][0x]$}}

\put(86.625,23.5){\makebox(0,0)[cc]{$0x][x1]$}}

\put(86.625,11.625){\makebox(0,0)[cc]{$00][xx]$}}

\put(129.625,5.5){\makebox(0,0)[cc]{$_{10][1x][x0]}$}}

\put(129.625,18.125){\makebox(0,0)[cc]{$_{11][x0][1x]}$}}

\put(129.625,29.5){\makebox(0,0)[cc]{$_{11][0x][x1]}$}}

\put(86.625,17.5){\makebox(0,0)[cc]{$_{00][ox][x1]}$}}

\put(86.625,29.5){\makebox(0,0)[cc]{$_{01][x1][0x]}$}}

\put(86.625,5.5){\makebox(0,0)[cc]{$_{00][x0][1x]}$}}

\put(70.875,45){\vector(1,0){10.125}}
\put(70.875,60.875){\vector(1,0){10.125}}
\put(98.3 ,23.0){\makebox(0,0)[cc]{$13|2$}}
\put(107.3,26){\makebox(0,0)[cc]{$3|12$}}
\put(117.5,23.0){\makebox(0,0)[cc]{$23|1$}}
\put(117.5,12.5){\makebox(0,0)[cc]{$2|13$}}
\put(107.3,8.5){\makebox(0,0)[cc]{$12|3$}}
\put(98.3 ,12.5){\makebox(0,0)[cc]{$1|23$}}

\put(98.0,17.625){\makebox(0,0)[cc]{$_{1|3|2}$}}
\put(98.0,27.25){\makebox(0,0)[cc]{$_{3|1|2}$}}
\put(118.3,27.25){\makebox(0,0)[cc]{$_{3|2|1}$}}
\put(118.3,17.5){\makebox(0,0)[cc]{$_{2|3|1}$}}
\put(118.3,8.0){\makebox(0,0)[cc]{$_{2|1|3}$}}
\put(98.0,8.0){\makebox(0,0)[cc]{$_{1|2|3}$}}

\put(34.875,64){\makebox(0,0)[cc]{$I^0$}}
\put(34.875,57.75){\makebox(0,0)[cc]{$0$}}

\put(94.125,64){\makebox(0,0)[cc]{$P_1$}}
\put(94.125,57.625){\makebox(0,0)[cc]{$1$}}

\put(47.875,48.0){\makebox(0,0)[cc]{$I^1$}}
\put(35,42.625){\makebox(0,0)[cc]{$0$}}
\put(62.625,42.625){\makebox(0,0)[cc]{$1$}}
\put(108,48.0){\makebox(0,0)[cc]{$P_2$}}

\put(48.75,18.375){\makebox(0,0)[cc]{$I^2$}}
\put(108.0,18.0){\makebox(0,0)[cc]{$P_3$}}

\put(48,9.0){\makebox(0,0)[cc]{$x0$}}
\put(48.125,26.5){\makebox(0,0)[cc]{$x1$}}
\put(38.0 ,17.875){\makebox(0,0)[cc]{$0x$}}
\put(60  ,17.875){\makebox(0,0)[cc]{$1x$}}
\put(35,4.25){\makebox(0,0)[cc]{$00$}}
\put(62.75,4.25){\makebox(0,0)[cc]{$10$}}
\put(62.75,31.5){\makebox(0,0)[cc]{$11$}}
\put(34.75,31.5){\makebox(0,0)[cc]{$01$}}

\put(94.125,  42){\makebox(0,0)[cc]{$0][x]$}}
\put(121.375, 42){\makebox(0,0)[cc]{$1][x]$}}

\put(68.0 ,18){\vector(1,0){6.5}}
\put(94.125,48){\makebox(0,0)[cc]{$1|2$}}
\put(121.375,48){\makebox(0,0)[cc]{$2|1$}}
\end{picture}

\vspace{0.2in}
 \begin{center}
 Figure 6.  The correspondence between the diagonal components of the
 cube $I^n:= (x\cdots x)$
 and   the faces of the permutahedron   $x\cdots
 x]:=P_{n+1}:=\underline{n+1}$ for $n=0,1,2.$
\end{center}

\newpage

 \section{Closed necklical model for the free loop space}

For any cubical set $X$ consider the graded set \[ \Big(\bigsqcup_{R
\in Nec^{_\Box}_c} Hom(R,X) \Big)/\sim\]  where $\sim$ is the
equivalence relation generated by the following rules:
For any $f \in  Hom(R,X)$ and  $f_{p,j}$  and $f_p$
of types (ii)   and  (ii')
in
Proposition \ref{maps2},

\begin{multline}\label{rule1}
f \circ f_{0,n_0+1} \sim f\circ  f_{1,1},\ \
f \circ f_{p,n_p} \sim f\circ  f_{p+1,1},  \,\,\, 1\leq p< k,\ \ \text{and}\\
\ \  f \circ f_{k,n_k} \sim f\circ  f_{0,1},
\end{multline}
and
\begin{equation}\label{rule2}
f \circ f_p \sim f.
\end{equation}
Denote  the equivalence class of $f: R\to X$ by $[f: R \to X].$

\subsection{The closed necklical set $\mathbf{\Lambda} X$} Here we
abuse slightly the language  by calling the object  $\mathbf{\Lambda}
X$ necklical set.
For any cubical set $X$ define a cubical closed necklical set
$\mathbf{\Lambda}X: {Nec^{_\Box}_c}^{op} \to Set$ by declaring
$\mathbf{\Lambda}X(R)$ to be the subset of
 \[\left(\bigsqcup_{R' \in Nec^{_\Box}_c} Hom(R',X) \right)/\sim\]
 consisting of all $\sim$ -- equivalence classes represented by morphisms
 $R \to X \in Nec^{_\Box}_c \downarrow X$. This clearly defines a
 functor: given a morphism $u: R \to R'$ in $Nec^{_\Box}_c$ and an
 element $[f: R' \to X] \in \mathbf{\Lambda} X (R')$ we obtain a well
 defined element  $ [f \circ u: R \to R' \to X ] \in
 \mathbf{\Lambda}X(R)$.  In particular,
  \[\mathbf{\Lambda} X=\{\mathbf{\Lambda}_{n_0,r,k} X\}
_{n_0,r\geq 0,k\geq 1}\] is a trigraded set with
$\mathbf{\Lambda}_{n_0,r,k} X:= \{I^{n_0}\vee T \to X \in Nec^{_\Box}_c
\downarrow X)\mid \dim T=r,\, b(T)=k  \} / \sim .$  But we usually consider
$\mathbf{\Lambda} X$ as bigraded
\[ \mathbf{\Lambda} X=\{\mathbf{\Lambda}_{n,k} X\} \ \ \text{with}\ \  
\mathbf{\Lambda}_{n,k} X=\bigcup_{n=n_0+r}\mathbf{\Lambda}_{n,r,k} X.
 \]
Note that $\mathbf{\Lambda}X$ is precisely the following colimit in the
category of cubical closed necklical sets
\begin{eqnarray*}
\mathbf{\Lambda} X=  \underset{f: R \to X \in
 (Nec^{_\Box}_c \downarrow X) } {\text{colim}} Y(R),
\end{eqnarray*}
where $Y\!\! :\! Nec^{_\Box}_c\! \to Set_{Nec^{_\Box}_c}$ denotes the Yoneda
embedding $Y(R)\!=\! Hom_{Nec^{_\Box}_c}( -\,, R)$.
Analogously we define the cubical
 necklical set $\mathbf{\Omega}(X;x_1,x_2)$
\begin{eqnarray*}
\mathbf{\Omega}(X;x_1,x_2)= \underset{ f: T \to X \in
(Nec^{_\Box}\downarrow X)_{x_1,x_2}} {\text{colim}} Y(T)
\end{eqnarray*}
where $x_1,x_2 \in X_0$  are some vertices, $(Nec^{_\Box} \downarrow
X)_{x_1,x_2}$ denotes the category of maps $f:T \to X$  such that $f$ sends  the first and last vertices of
$T$ to $x_1$  and $x_2$, respectively, and $Y(T)= Hom_{{Nec^{_\Box}}}(-\, , T)$.
When $x_1=x_2=x_0$ is  a base point of $X,$ we simply denote $\mathbf{\Omega} X:=
\mathbf{\Omega}( X; x_0,x_0).$

\subsection{Inverting $1$-cubes formally}

Given a cubical set $(X,d^\epsilon_i, \eta_j),$ form a set

\noindent $X_1^{op}:=\{ x^{op} \mid x \in X_1\ \ \text{is non-degenerate} \}.$
Let $Z(X)$ be the minimal cubical set containing the set
$X\cup X^{op}_1$ such that $d^0_1( x^{op} )= d^1_1( x)$ and
 $d^1_1( x^{op} ) = d^0_1 (x).$
 Denote by $\mathbf{\Lambda}'(Z(X))$ the subset  of $\mathbf{\Lambda}(Z(X))$    such that $f(I^{n_0})\subset X$
   for any $f:I^{n_0}\vee T\rightarrow  Z(X).$
Then define
  \[\widehat{\mathbf{\Lambda}}X:= \mathbf{\Lambda}'(Z(X)) / \sim\]
where the equivalence relation $\sim$ is generated by
\[ f\sim  f'\circ g:I^{n_0}\vee T\rightarrow Z (X) \]
for
 $T= I^{n_{1}}\vee ...\vee I^{n_p}\vee
I^{n_{p+1}}\vee ...\vee I^{n_{k}} $ with
$n_p=n_{p+1}=1,$   $1 \leq p \leq k$, and 
$f(I^{n_p})=(f(I^{n_{p+1}}))^{op},$ so $f$ induces a map
$f':  I^{n_0} \vee I^{n_{1}}\vee ...\vee I^{n_{p-1}}\vee
I^{n_{p+2}}\vee ...\vee I^{n_{k}}\rightarrow X,$
and
\begin{multline*}
g:  I^{n_0} \vee I^{n_{1}}\vee ...\vee I^{n_{p-1}}\vee I^{n_p}\vee
I^{n_{p+1}}\vee I^{n_{p+2}}\vee ...\vee I^{n_{k}}
\rightarrow
\\
 I^{n_0} \vee I^{n_{1}}\vee ...\vee I^{n_{p-1}}\vee I^{n_{p+2}}
\vee ...\vee I^{n_{k}}
\end{multline*}
is the collapse map. 
Similarly, is defined the set $\widehat{\mathbf{\Omega}}X.$

Below we give explicit descriptions of the above cubical (closed) necklical sets
with face and degeneracy operators involved.

\subsection{An explicit construction of $\widehat{\mathbf{\Omega}}X$}

Let $(X,x_0)$ be a pointed cubical set with face and degeneracy maps
denoted by $d^{\epsilon}_i$ and $\eta_j$, respectively. For a cube $\sigma
\in X$ denote by $\min \sigma$ and $\max \sigma$ the first and last
vertices of $\sigma,$ respectively.
 We give  an explicit description of the underlying graded set $\{
 \widehat{\mathbf{\Omega}}_n X \}_{n\geq 0}$ of the cubical necklical
 set $\widehat{\mathbf{\Omega}} X$.

 Let $\overline{ X}=s^{-1} Z(X)_{>0}$ be  the  desuspension of the
 graded set $Z(X)_{>0},$ and $MX$ be the free graded monoid generated
 by $\overline{ X}.$ For $\sigma \in Z(X)_{>0},$  denote $\bar \sigma:=s^{-1}\sigma\in
 \overline X$ with $|\bar \sigma|=| \sigma|-1.$  Let for $n\geq 0,\,$  $k\geq 1$
 and $n_i+1=| \sigma_i|,$   define
\begin{multline*}
{\mathbf{\Omega}}'_{r,k}( X; x_1,x_2)=\{ \bar \sigma_1\cdots \bar \sigma_k\in MX  \mid x_1,x_2\in X_0,\,
\max \sigma_i=\min \sigma_{i+1}\
\text{for all}\ i,\\  \min\sigma_1=x_1,\,   \max\sigma_k= x_2 ,\,\,  
 n_1+\cdots +n_k=r \}.
  \end{multline*}
 Then
\[ \widehat{\mathbf{\Omega}}(X;x_1,x_2)= \{\widehat{\mathbf{\Omega}}_{r,k}(X;x_1,x_2)\}_{r\geq
0,k\geq 1},\ \ \ 
\widehat{\mathbf{\Omega}}_{r,k}(X;x_1,x_2)= \mathbf{\Omega}_{r,k}'(X;x_1,x_2)/\sim \]
where $\sim$ is defined by
\[
\begin{array}{llll}
\bar \sigma_1\cdots  \overline{\eta_{n_i+1}( \sigma_i)}\cdot \bar
\sigma_{i+1}\cdots \bar \sigma_k\,\, \sim\,\,
\bar \sigma_1\cdots  \bar \sigma_{i}\cdot \overline{\eta_1( \sigma_{i+1})}
\cdots \bar \sigma_k, \,\,\,\,
    1\leq i <k,  \hspace{1mm}\\
    \text{and for}\ \ \sigma_{i+1}=\sigma^{op}_i \ \ \text{with}\ \
 \sigma_{i}\in Z(X)_1 \hspace{1mm}\\
    \bar \sigma_1\cdots  \bar \sigma_{i-1}\cdot \bar \sigma_i\cdot \bar
    \sigma_{i+1}\cdot\bar \sigma_{i+2}   \cdots \bar \sigma_k\,\sim\,
\bar \sigma_1\cdots \bar \sigma_{i-1}\cdot \bar \sigma_{i+2}\cdots
\bar \sigma_k\ \  \text{and} \vspace{1mm}\\ 
\bar \sigma_i\cdot \bar \sigma_{i+1}\sim\, \overline{\eta_1( \min \sigma_i)}
.
\end{array}
\]

 The face operators
 \[d_{A_i|B_i}: \widehat{\mathbf{\Omega}}_{r,k}(X;x_1,x_2) X \rightarrow
 \widehat{\mathbf{\Omega}}_{r,k+1}(X;x_1,x_2),\ \ \   1\leq i\leq k, \]
  are  defined
  for
 $  y:=   \bar \sigma_1\cdots \bar \sigma_k$   with   $\sigma_i\in Z(X)_{n_i+1}$  by
\[ d_{A_i|B_i}( y)=\bar \sigma_1\cdots \bar \sigma_{i-1}\cdot \overline{d^0_B(\sigma_i)}\cdot \overline{d^1_A(\sigma_i)}\cdot \bar \sigma_{i+1}\cdots \bar \sigma_{k}, \ \ \
\ A_i|B_i \in P(n_i),  \]
and the degeneracy maps
\[  \varrho_j:  \widehat{\mathbf{\Omega}}_{r,k}(X;x_1,x_2) \rightarrow
 \widehat{\mathbf{\Omega}}_{r,k}(X;x_1,x_2), \ \ \  1\leq j\leq r+k+1,  \]
 are defined by
   \begin{multline*}
\varrho_j(\bar \sigma_1 \cdots  \bar \sigma_k)=
 \bar \sigma_1\cdots \bar \sigma_{p-1}\cdot  \overline{\eta_{i}(
 \sigma_p)}\cdot \bar \sigma_{p+1}\cdots \bar \sigma_k \ \
\text{for}\ \
       j=n_1+\cdots + n_{p-1}+p-1+i.
\end{multline*}
 Then
 $\widehat {\mathbf{\Omega}}_{r,k}X$ is obtained from $\widehat{\mathbf{\Omega}}_{r,k}(X;x_1,x_2)$
 by setting $x_1=x_2=x_0=  \min \sigma_1=\max \sigma_k  $ and   
 $ \varrho_{1}=\varrho_{r+k+1}.$
 Thus  $\widehat {\mathbf{\Omega}}X$
  is  a monoidal permutahedral set with unit $1\in \widehat{\mathbf{\Omega}}_0 X.$
In particular, $ \widehat{\mathbf{\Omega}}_0 X$ is a group.

\subsection{An explicit construction of $\widehat{\mathbf{\Lambda}}X$}

Let
 \[
{\mathbf{\Lambda}}'_{n,k} X \! =\!
\{ (u\,,\, y ) \in \bigcup_{\substack{n_0+r=n\\
x_1,x_2\in X_0}}\!
X_{n_0}\!\times \widehat{\mathbf{\Omega}}_{r,k}(X;x_1,x_2) \mid
\min u=x_2,\, \max u=x_1\}.
\]
 Then define the bigraded set
 \[
 \widehat{\mathbf{\Lambda}} X=\{ \mathbf{\Lambda}_{n,k} X \}_{n,k\geq 0} ,\ \
 \ \
\widehat{\mathbf{\Lambda}} X=\mathbf{\Lambda}' X/\sim
 \]
 where $\sim$ is defined for $(u,y)\in \mathbf{\Lambda}'_{n,k} X$ with $u\in X_{n_0}$ and $y\in \widehat{\mathbf{\Omega}}_{r,k}(X;x_1,x_2)$
via the relations
\[ (\eta_{n_0+1}(u),y)\sim (u,\varrho_1(y))\ \ \text{and}\ \    (u,\varrho_{r+k+1}(y))\sim (\eta_1(u), y). \]
Define   the three types of  face operators
 \[d_{A|B},\, d^{op}_{C|D},\, d_{A_i|B_i}: \widehat{\mathbf{\Lambda}}_{n,k} X
 \rightarrow
 \widehat{\mathbf{\Lambda}}_{n-1.k+1} X \]
  for
  $a:= (u,y)= (u\,,  \bar \sigma_1\cdots \bar \sigma_k)\in \widehat{\mathbf{\Lambda}} X $ with  $u\in X_{n_0}$ and $1\leq i\leq k$
      by
\[
 \begin{array}{llllll}
  d_{A|B}(a)=
   \left( d^0_{B}(u)\, ,\, \overline{d^1_{A}(u)}   \cdot  \bar
     \sigma_1\cdots \bar \sigma_k \right), & A|B\in \overline{P}'(n_0),
 \vspace{1mm}  \\
 d^{op}_{C|D}(a)=
     \left( d^1_{C}(u)\,,\, \bar \sigma_1\cdots \bar \sigma_k  \cdot
     \overline{d^0_{D}(u)}\,  \right),& C|D\in \overline{P}''(n_0),
     \vspace{1mm}\\

 d_{A_i|B_i}(a)= \left(u\, , \, \bar \sigma_1\cdots \bar \sigma_{i-1} \cdot
  \overline{d^0_{B_i}(\sigma_i)}\cdot \overline{d^1_{A_i}(\sigma_i)}\cdot \bar
  \sigma_{i+1}\cdots \bar \sigma_k\right)   , &
   A_i|B_i\in P(n_i),
\end{array}
\]
and
the degeneracy maps
\[  \varrho_j :  \widehat{\mathbf{\Lambda}}_{n-1,k}X \rightarrow
 \widehat{\mathbf{\Lambda}}_{n,k}X \ \ \text{for}\ \  1\leq j\leq n+k+1,
 \]
by
\begin{equation*}
\varrho_j(u,y)=\left\{ \begin{array}{llll} (\eta_{j}(u), y), &
1\leq j\leq n_0+1,\vspace{1mm} \\
                       (u, \varrho_{j-n_0}(y)), & n_0+1< j\leq n+k+1.
\end{array}
\right.
\end{equation*}

\subsection{The geometric realizations of 
$\widehat{\mathbf{\Lambda}} X$  }

Using modelling polytopes as permutahedra $P_n$
the \textit{geometric realization} of the  set
$\widehat{\mathbf{\Lambda}} X $  is
\begin{eqnarray*}
|\widehat{\mathbf{\Lambda}} X|:= \bigsqcup_{n_0\geq0,k\geq1}\widehat{\mathbf{\Lambda}}_{n_0,k} X  \times (P_{n_0+1}\times
P_{n_1}\times\cdots  \times P_{n_k})   / \sim \, ,
\end{eqnarray*}
where  $\widehat{\mathbf{\Lambda}}_{n_0,k} X $ is considered as a topological
space with the discrete topology, and $\sim$ is the equivalence
relation  generated
for  $ a:= (u\,,  \bar \sigma_1\cdots \bar \sigma_k)  \in  \mathbf{\Lambda}_{n_0,k} X $ with    $u\in X_{n_0},\, \sigma_i\in Z(X)_{n_i+1}  $ and
$t\in P_{n_0+1}\times  P_{n_1+1}\times\cdots  \times P_{n_k+1}$ by
\[\left(a, \delta^{A|B} (t)\right )\sim \left(d_{A|B}(a), t\right) \ \
\text{and}\ \
    (a, \varrho^j(t))\sim (\varrho_j(a), t),\]
where 
\begin{multline*}\delta^{A|B}: P_{n_0+1}\times\cdots \times
P_{n_{i}+1}   \times P_{n_{i+1}+1} \times \cdots  \times P_{n_k+1}\rightarrow \\
P_{n_0+1}\times\cdots \times P_{n_{i}+n_{i+1}+2}\times \cdots
\times P_{n_k+1} ,    
\end{multline*}
$\delta^{A|B}=id\times \cdots   \times \iota_{A|B}  \times \cdots   \times id,\,
\iota_{A|B}: P_{n_i+1}\times P_{n_{i+1}+1}\hookrightarrow P_{n_i+n_{i+1}+2},\, 0\leq i<k, $
and for  $j=n_0+n_1+\cdots + n_{p-1}+p +i $
\[\varrho^j
: P_{n_0+1}\times\cdots \times
P_{n_{p}+1}\times \cdots  \times P_{n_k+1}\rightarrow
P_{n_0+1}\times\cdots \times P_{n_p}\times \cdots\times P_{n_k+1},
  \]
 $\varrho^j=id\times \cdots   \times \varrho^i  \times \cdots   \times id,\,$
$\varrho^i: P_{n_p+1}\rightarrow P_{n_p}$ is cellular projection compatible with
 the $i$ -- $th$ standard projection $I^{n_p}\rightarrow I^{n_p-1}$ under the map $\varsigma_{n_p-1}$ given by (\ref{varsigma}).

  Similarly, 
  \begin{eqnarray*}
|\widehat{\mathbf{\Omega}} X|:= \bigsqcup_{r\geq 0;k\geq
1}\widehat{\mathbf{\Omega}}_{r,k} X  \times( P_{n_1+1}\times\cdots  \times
P_{n_k+1}) / \sim, \ \ \ \   r=n_1+\cdots +n_k.
\end{eqnarray*}

\subsection{The quasi-fibration $\zeta$}
We have  the  short sequence
\[
 \widehat{\mathbf{\Omega}} X   \overset{i }{\longrightarrow}
  \widehat{\mathbf{\Lambda}}X   \overset{pr}{\longrightarrow} X
\]
of maps of sets where $i$ is defined  for  $y
\in \widehat{\mathbf{\Omega}} X $ by  $i(y)=(x_0, y),$ 
while $pr(u,y)=u$ for
$(u,y)\in   \widehat{\mathbf{\Lambda}} X.$
The map $i: \widehat{\mathbf{\Omega}} X \to
\widehat{\mathbf{\Lambda}}X$ induces a continuous map
 $ \iota: |\widehat{\mathbf{\Omega}} X|  \to
 |\widehat{\mathbf{\Lambda}}X |$. The projection
$pr : \widehat{\mathbf{\Lambda}}X  {\longrightarrow} X$ together with
the cellular projections $\varsigma_n: P_{n+1}\rightarrow I^{n}$ given
by (\ref{varsigma})
induces a continuous and cellular map $\zeta:
|\widehat{\mathbf{\Lambda}}X|  \to |X|.$

The proof of the following statements are entirely analogous to that in
the simplicial setting \cite{RS2}.

\begin{proposition}\label{quasifree}

For a pointed connected cubical set  $(X,x_0)$  the short sequence
\begin{equation}\label{quasifib}
 |\widehat{\mathbf{\Omega}} X|   \overset{\iota}{\longrightarrow}
  |\widehat{\mathbf{\Lambda}}X|   \overset{\zeta}{\longrightarrow} |X|
\end{equation}
is a quasi-fibration.
\end{proposition}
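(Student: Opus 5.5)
We will verify the Dold--Thom criterion for quasi-fibrations, following the simplicial argument of \cite{RS2}. Filter $|X|$ by its skeleta $|X|^{(m)}$, i.e.\ the images of cubes of dimension $\leq m$. By the Dold--Thom criterion it suffices to show three things: (a) over each open cell $\mathring{u}$, $u\in X_m$ non-degenerate, the map $\zeta$ is a trivial fibration; (b) each $|X|^{(m-1)}$ has a neighbourhood $U$ in $|X|^{(m)}$ which deforms onto $|X|^{(m-1)}$ and is covered by a deformation of $\zeta^{-1}(U)$ onto $\zeta^{-1}(|X|^{(m-1)})$ inducing weak equivalences on fibres; (c) a colimit argument passes from the skeleta to the whole of $|X|$.

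For (a), use the explicit description of $\widehat{\mathbf{\Lambda}}X$. A cell $(u,y)$ with $u\in X_{n_0}$ realizes as $P_{n_0+1}\times P_{n_1+1}\times\cdots$, and $\zeta$ maps it to $|u|$ through $\varsigma_{n_0}$. By (\ref{cubcell}), the interior of a subcube of $I^{n_0}$ has a unique cellular preimage $u(\epsilon)$. Together with the relation $(\eta_{n_0+1}(u),y)\sim(u,\varrho_1(y))$, this identifies $\zeta^{-1}(\mathring u)$ with $\mathring u\times|\widehat{\mathbf{\Omega}}(X;\max u,\min u)|$. Explicitly, the $P_{n_0+1}$ coordinate splits, via $\chi:P_{n_0}\times I\to P_{n_0+1}$ and $\omega_{n_0}$, into a point of $u$ and a path inside $u$ from $\min u$ to $\max u$. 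That path is absorbed into the necklace coordinate through the face operators $d_{A|B}$ and $d^{op}_{C|D}$, whose images (Proposition \ref{freeloops}) are exactly the codimension-one cells $(A+1)\cup1|(B+1)$ and $(C+1)|(D+1)\cup1$.

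For (b), take $U$ to be $|X|^{(m-1)}$ together with the collars obtained by radially deforming each open $m$-cube onto its boundary. Lift this deformation cellwise to $\zeta^{-1}(U)$. When the point of $u$ moves to a face $d^\epsilon_C u$, the part of the $P_{n_0+1}$ coordinate lying over the complementary faces is pushed into the necklace coordinate by the face maps $d_{A|B}$ and $d^{op}_{C|D}$. The resulting fibre maps are given by concatenating with the $1$-cubes and subcubes of $u$ joining the corresponding vertices. Because $1$-cubes were formally inverted in $Z(X)$, with relations $\bar\sigma\cdot\bar\sigma^{op}\sim\overline{\eta_1(\min\sigma)}$, these concatenation maps are homotopy equivalences. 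Concretely, $\widehat{\mathbf{\Omega}}_0X$ is a group, and the fibres over different vertices are identified by multiplication by invertible elements; this is where connectivity of $X$ enters. Hence the lifted deformation induces weak equivalences on fibres.

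The main obstacle is (b): checking that the cellwise lifts agree on common faces. This means compatibility of $\varsigma_{n_0}$ with the degeneracy relations (\ref{rule1}) and (\ref{rule2}) and with the projections $\varrho^i$. I would carry this out by the same induction on $n_0$ as in \cite{RS2}, replacing simplicial faces with the Cartesian decompositions $d^0_B\times d^1_A$ and using the correspondence $g_\Lambda$ of Proposition \ref{freeloops}. Finally, for (c), apply the colimit property of quasi-fibrations along the skeletal filtration.
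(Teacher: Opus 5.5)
Your outline is correct and follows the route the paper intends. The paper gives no argument of its own beyond declaring the proof entirely analogous to the simplicial case of \cite{RS2}, and your skeletal Dold--Thom induction is that argument carried over to cubes: a trivial fibration over open cubes, lifted collar deformations, and the formally inverted $1$-cubes making the fibre-transport maps homotopy equivalences. The face/degeneracy compatibility check you flag is exactly the part that both you and the paper leave to \cite{RS2}.

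One small correction to (a): only the top cell of $P_{n_0+1}$ lies over the open cube $\mathring u$. So nothing is absorbed into the necklace there, and the fibre is simply $|\widehat{\mathbf{\Omega}}(X;\max u,\min u)|$. The face operators $d_{A|B}$ and $d^{op}_{C|D}$ only enter in (b), when the $P_{n_0+1}$ coordinate is pushed onto $\partial P_{n_0+1}$.
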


\begin{theorem}\label{freeloopmodel}
Let $Y=|X|$ be the geometric realization of a path connected cubical
set $X.$
Let $ \Omega Y \overset{i}{\rightarrow} \Lambda Y\overset{\varrho}
\longrightarrow Y$ be  the free loop fibration on $Y.$ There is a
commutative diagram
\begin{equation}\label{freediagram}
\begin{array}{cccccc}
   |\widehat{\mathbf{\Omega}} X| & \overset{\omega}{\longrightarrow}
   &                 \Omega Y \\
        \iota  \downarrow            &                  &
        \hspace{-0.1in} \iota   \downarrow \\
      |\widehat{\mathbf{\Lambda}}   X|  &
      \overset{\Upsilon}{\longrightarrow} &                     \Lambda
      Y                    \\
     \zeta \downarrow    &                                    &
     \hspace{-0.1in}  \varrho     \downarrow           \\
 \hspace{0.1in}|X| & \overset{Id}{\longrightarrow}                    &
 Y
\end{array}
\end{equation}
in which
  $\Upsilon$ and $\omega$ are homotopy equivalences.
\end{theorem}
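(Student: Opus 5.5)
We first construct $\omega$ and $\Upsilon$ explicitly and check that the diagram (\ref{freediagram}) commutes. Then we compare the two vertical quasi-fibrations, using Proposition \ref{quasifree} and the five lemma on homotopy groups.

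For a cell $(u,\bar\sigma_1\cdots\bar\sigma_k)$ of $\widehat{\mathbf{\Lambda}}X$ with $u\in X_{n_0}$ and $\sigma_i\in Z(X)_{n_i+1}$, the corresponding polytope is $P_{n_0+1}\times P_{n_1+1}\times\cdots\times P_{n_k+1}$. On the factors $P_{n_i+1}$ we use the maps $\omega_{n_i+1}:P_{n_i+1}\to P_{0,n_i+1}(I^{n_i+1})$ of (\ref{omegan}), composed with the characteristic maps of the $\sigma_i$. This gives paths in $Y$ from $\min\sigma_i$ to $\max\sigma_i$, and we concatenate them in order. Each formal inverse $\sigma^{op}$ of a $1$-cube is sent to the reversed path. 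On $P_{n_0+1}$ we use $\chi^{-1}$ together with $\varsigma_{n_0}$. A point of $P_{n_0+1}$ then gives a point $s$ of $P_{n_0}\times I$; the $P_{n_0}$-coordinate gives a path $\gamma$ in $u$ from $\min u$ to $\max u$, and $s\in I$ picks the base point $\gamma(s)$. We define $\Upsilon$ to be the free loop that starts at $\gamma(s)$, runs along $\gamma|_{[s,1]}$ to $\max u=x_1$, then along the concatenated necklace path to $x_2=\min u$, and finally along $\gamma|_{[0,s]}$ back to $\gamma(s)$. The evaluation of this loop at $0$ is $\gamma(s)$, which is $\varsigma_{n_0}$ applied to the point and composed with $u$. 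Hence $\varrho\circ\Upsilon=\zeta$. Restricting to $u=x_0$ gives $\omega$ and the upper square.

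Next we check that $\Upsilon$ is well defined on the geometric realization. This means checking compatibility with the face identifications $\delta^{A|B}$ and $\delta^{C|D}_{op}$, the inner faces $d_{A_i|B_i}$, the degeneracies $\varrho^j$, and the relations $\sim$. For the inner faces and the faces $\delta^{A|B}$ we use Subsection \ref{cubicalc}. There, a diagonal component $u_B\otimes u_A$ of the cube corresponds to the SCP cell $u(0)\times v(1)$, and on it the path $\omega_n$ is the concatenation of the paths in $u_B$ and $u_A$. The faces $\delta^{C|D}_{op}$ correspond to the cells $(C+1)|(D+1)\cup1$ of Proposition \ref{freeloops}. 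On those cells the base point parameter has moved to the end, so the last segment of the loop is rotated to the front, and this matches $d^{op}_{C|D}$. The inverted $1$-cubes give a back-and-forth path, which is homotopic to constant; so $\Upsilon$ is compatible with the relations only up to a canonical homotopy. To make $\Upsilon$ strictly well defined we either use Moore loops with reparametrization, or we replace $\Lambda Y$ by the homotopy equivalent space of loops modulo thin homotopies. I expect this bookkeeping, making the map strict on all identifications at once, to be the main obstacle. I would first try the Moore-path model, where concatenation is strictly associative.

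With $\Upsilon$ in hand, Proposition \ref{quasifree} gives long exact homotopy sequences for both columns, and $\Upsilon$ and $\omega$ induce a map between them that is the identity on the base. So it suffices to show that $\omega$ is a weak equivalence. For this we use the same argument as in \cite{RS2}. The chains $C^\diamond_\ast(\widehat{\mathbf{\Omega}}X)$ are the cobar construction on $C_\ast(X)$, by the face formulas $d_{A_i|B_i}$ and the Serre diagonal (\ref{dcube}). Adams' theorem, in its cubical form, then shows that $\omega$ induces an isomorphism on homology. Inverting the $1$-cubes makes $\pi_0$ and $\pi_1$ correct: $\widehat{\mathbf{\Omega}}_0X$ is a group, and it maps onto $\pi_1(Y)$. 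Both sides are H-spaces, through concatenation and the monoidal structure, and $\omega$ is an H-map. A homology isomorphism between such H-spaces with an isomorphism on $\pi_0$ is a weak equivalence, and this remains true in the non-simply connected case when we use local coefficients on components. Since everything has the homotopy type of a CW complex, the five lemma upgrades $\Upsilon$ to a homotopy equivalence by Whitehead's theorem.
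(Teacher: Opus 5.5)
Your construction of $\omega$ and $\Upsilon$ matches the paper's: the loop $\beta\ast\lambda_y\ast\alpha$ is built from the restrictions of $\omega_{n_0}(\mathbf{t})$ to $[0,s]$ and $[s,1]$, and the check $\varrho\circ\Upsilon=\zeta$ is the same. The gap is in the last step. The five lemma, applied to the long exact sequences of Proposition~\ref{quasifree}, does not show that $\Upsilon$ is bijective on $\pi_0$. Write $F$ and $E$ for the fibre and total space. These sequences end in pointed sets, $\pi_1(Y)\to\pi_0(F)\to\pi_0(E)\to\ast$. Exactness there only tells you which components of $F$ land in the base component of $E$. It does not tell you when two components of $F$ become identified in $E$. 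For free loops that identification is the real content, since $\pi_0(\Lambda Y)$ is the set of conjugacy classes in $\pi_0(\Omega Y)=\pi_1(Y)$.

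This is exactly the step the paper isolates (``it remains to check that $\Upsilon$ induces a bijection of path components''). It handles it with the $1$-cells $(x,\bar y\bar x^{-1})$, whose faces are $d_{\varnothing|\underline{1}}(x,\bar y\bar x^{-1})=(x_0,\bar x\bar y\bar x^{-1})$ and $d_{\underline{1}|\varnothing}(x,\bar y\bar x^{-1})=(x_0,\bar y)$. These give $\pi_0(|\widehat{\mathbf{\Lambda}}X|)=\pi_0(|\widehat{\mathbf{\Omega}}X|)/(\bar y\sim\bar x\bar y\bar x^{-1})$, so $\Upsilon_0$ is a bijection because $\omega_0$ is. You need either this computation, or the $\pi_1(Y)$-orbit form of exactness at $\pi_0(F)$ together with the compatibility of $\Upsilon$ with that action. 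Only then does the five lemma, applied at base points in each component, finish the argument.

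Your proof that $\omega$ is a weak equivalence also falls short of the stated generality. The paper simply appeals to \cite{RS2} here. Adams' cobar theorem needs $X$ to be $1$-reduced (simply connected), but the theorem is stated for any path-connected $X$. Getting $\pi_0$ and $\pi_1$ right by inverting $1$-cubes does not show that the hat-cobar complex computes the homology of the components of $\Omega Y$. Each of these has the homology of $\Omega\widetilde Y$, so a non-simply-connected extension of Adams' theorem is required, and that is precisely the input imported from \cite{RS2}. You would also have to check that $\omega_\ast$ is the comparison map itself; an abstract isomorphism of homologies is not enough. Your H-space/Whitehead step is fine, and no local coefficients are needed, since components of a grouplike H-space are simple. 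Finally, the paper does not address strict well-definedness on the cancellation relations. Your proposed Moore-loop fix would not repair it: $\sigma\ast\sigma^{-1}$ is still not equal to a constant Moore loop.
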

\begin{proof}

The maps $\Upsilon$ and $\omega$ above are in fact  canonically defined
by means of the cellular projection $\varsigma_n:P_{n+1}\rightarrow I^n$
and the map $\omega_{n}: P_{n}\rightarrow  P_{0,n}(I^n) $
given by (\ref{varsigma})   and (\ref{omegan}), respectively.
Namely, let $y=({t}_1,...,{t}_k)\in |(\bar \sigma_1\cdots \bar \sigma_k )|\subset |\widehat{\mathbf{\Omega}}_{r,k}(X;x_1,x_2)|$  with $|\bar \sigma_i|=n_i;$ then
the maps  $\omega_{n_i}: P_{n_i}\rightarrow  P_{0,n_i}(I^{n_i})  $
   for $1\leq i\leq k$    induce  the map
  \[\omega: |\widehat{\mathbf{\Omega}}_{r,k}(X;x_1,x_2)| \rightarrow  \Omega Y\]
  by
$\omega(y)=\omega_{n_k}({t}_1)* \cdots *
 \omega_{n_1}({t}_k): I\rightarrow Y
$
where $*$ denotes path concatenation in $Y.$ Denote $\lambda_y:=\omega(y),$
a path from $x_1$ to $x_2$ in $Y.$

We now construct $\Upsilon:  |\widehat{\mathbf{\Lambda}}   X|
{\rightarrow}                  \Lambda Y $. Let
$(x,y)\in |(u,\bar \sigma_1\cdots \bar \sigma_k )|\subset |\widehat{\mathbf{\Lambda}} X|,$ and let $|u|=n_0.$
For  $n_0=0$ define  $\Upsilon (x,y)=\lambda_y,$ a loop in $Y$ based at the point $x=x_1=x_2.$
  Let
$n_0 >0,$ and $x=(\mathbf{t},s)\in P_{n_0}\times I\approx
P_{n_0+1}.$  Define
\[\Upsilon(x,y)=\beta\ast \lambda_y \ast \alpha ,\]
where $\alpha$ is  a path from $ \varsigma_{n_0}(\mathbf{t},s)$
to $ \varsigma_{n_0}(\mathbf{t},1)$ in $I^{n_0}$ defined by the
restriction of $ \omega_{n_0}(\mathbf{t})$ to the interval
$[s,1]\subset [0,1]$ and
 $\beta$ is  a path from $ \varsigma_{n_0}(\mathbf{t},0)$ to
 $ \varsigma_{n_0}(\mathbf{t},s)$ in $I^{n_0}$ defined by the
 restriction of $ \omega_{n_0}(\mathbf{t})$ to the interval
 $[0,s]\subset [0,1] .$

 Similarly to  \cite{RS2}  we have that $\omega$ is a homotopy
 equivalence. By Proposition \ref{quasifree} the sequence given by   (\ref{quasifib}) is a quasi-fibraton,
 and, hence
  gives rise to a long exact sequence in
 homotopy groups. To show that $\Upsilon$ is a weak equivalence  it remains to check that $\Upsilon$ induces a bijection of path linear components
 $\Upsilon_0: \pi_0( |\widehat{\mathbf{\Lambda}}X  |)\rightarrow \pi_0(\Lambda Y).$ Recall that
  $\pi_0(\Lambda Y)=\pi_0(\Omega Y)/\sim$
  where the equivalence relation  is generated by $\lambda \sim \mu\lambda \mu^{-1}$ for any
   $\lambda,\mu \in \pi_0(\Omega Y).$  For $x\in \widehat{\mathbf{\Lambda}}_1X $
   and  $\bar y \in \widehat{\mathbf{\Lambda}}_0X $
   the equalities
   \[   d_{\varnothing|\underline{1}}(x,\bar y \bar x^{-1})=  (x_0,\bar x\bar y \bar x^{-1})\  \   \text{and}   \ \
    d_{\underline{1}|\varnothing}(x,\bar y \bar x^{-1})= (x_0 ,\bar y \bar x^{-1}\bar x)= (x_0 , \bar y) \]
    in
    $\widehat{\mathbf{\Omega}}_0 X\subset \widehat{\mathbf{\Lambda}}_0 X  $
    shows that
    \[ \pi_0( \widehat{\mathbf{\Lambda}}X )=\pi_0(\widehat{\mathbf{\Omega}}X ) / \bar y\sim \bar x\bar y \bar x^{-1}. \]
    Since $\omega_0:\pi_0( |\widehat{\mathbf{\Omega}}X  |)\rightarrow \pi_0(\Omega Y)$  is an isomorphism, $\Upsilon_0$  is a bijection.
\end{proof}

\vspace{0.2in}

\hspace{-1.5in}
\unitlength .7mm 
\linethickness{0.4pt}
\ifx\plotpoint\undefined\newsavebox{\plotpoint}\fi 
\begin{picture}(187.302,122.048)(0,0)
\put(94.3,59.827){\line(1,0){25.375}}
\put(143.633,59.827){\line(1,0){25.375}}
\put(119.675,59.827){\line(0,1){21}}
\put(169.008,59.827){\line(0,1){21}}
\put(119.675,80.827){\line(-1,0){25.375}}
\put(169.008,80.827){\line(-1,0){25.375}}
\put(94.3,80.827){\line(0,-1){20.875}}
\put(143.633,80.827){\line(0,-1){20.875}}
\put(119.675,80.702){\circle*{1}}
\put(94.425,80.577){\circle*{1}}
\put(143.758,80.577){\circle*{1}}
\put(116.858,118.853){\circle*{1}}
\put(144.479,119.03){\circle*{1}}
\put(94.3,59.827){\circle*{1}}
\put(143.633,59.827){\circle*{1}}
\put(119.675,59.827){\circle*{1}}
\put(169.008,59.827){\circle*{1}}
\put(94.281,38.692){\line(1,0){22.875}}
\put(159.03,38.411){\line(1,0){22.875}}
\put(94.281,38.442){\circle*{1}}
\put(159.03,38.161){\circle*{1}}
\put(78.531,29.567){\circle*{1}}
\put(143.28,29.286){\circle*{1}}
\put(78.406,12.067){\circle*{1}}
\put(143.155,11.786){\circle*{1}}
\put(102.906,11.942){\circle*{1}}
\put(167.655,11.661){\circle*{1}}
\put(102.906,29.192){\circle*{1}}
\put(167.655,28.911){\circle*{1}}
\put(94.156,20.942){\circle*{1}}
\put(158.905,20.661){\circle*{1}}
\put(117.156,20.942){\circle*{1}}
\put(181.905,20.661){\circle*{1}}
\put(117.281,38.567){\circle*{1}}
\put(182.03,38.286){\circle*{1}}
\put(126.405,69.909){\vector(1,0){9.75}}
\put(126.51,98.527){\vector(1,0){9.75}}
\put(126.097,118.916){\vector(1,0){9.75}}
\multiput(94.281,38.442)(-.084224599,-.048128342){187}{\line(-1,0){.084224599}}
\multiput(159.03,38.161)(-.084224599,-.048128342){187}{\line(-1,0){.084224599}}
\put(78.531,29.442){\line(1,0){24.5}}
\put(143.28,29.161){\line(1,0){24.5}}
\multiput(103.031,29.442)(.076351351,.047972973){185}{\line(1,0){.076351351}}
\multiput(167.78,29.161)(.076351351,.047972973){185}{\line(1,0){.076351351}}
\put(78.531,29.192){\line(0,-1){17.25}}
\put(143.28,28.911){\line(0,-1){17.25}}
\put(102.906,11.817){\line(0,1){17.5}}
\put(167.655,11.536){\line(0,1){17.5}}
\put(94.281,38.567){\line(0,-1){8.625}}
\put(159.03,38.286){\line(0,-1){8.625}}
\put(94.281,28.817){\line(0,-1){8}}
\put(159.03,28.536){\line(0,-1){8}}
\put(94.281,20.817){\line(1,0){8.25}}
\put(159.03,20.536){\line(1,0){8.25}}
\put(117.156,38.317){\line(0,-1){17.375}}
\put(181.905,38.036){\line(0,-1){17.375}}
\multiput(117.156,20.942)(-.075534759,-.048128342){187}{\line(-1,0){.075534759}}
\multiput(181.905,20.661)(-.075534759,-.048128342){187}{\line(-1,0){.075534759}}
\multiput(78.531,11.692)(.078282828,.047979798){99}{\line(1,0){.078282828}}
\multiput(87.156,16.692)(.08584337,.04819277){83}{\line(1,0){.08584337}}
\put(168.155,20.411){\line(1,0){13.625}}
\put(126.115,24.264){\vector(1,0){10}}
\put(130.347,121.166){$_{\Upsilon}$}
\put(130.76,100.777){$_{\Upsilon}$}
\put(167.095,91.786){$_{\lambda}$}
\put(162.05,110.846){$_{\lambda}$}
\put(172.735,57.367){$_{\lambda}$}
\put(180.485,12.564){$_{\lambda}$}
\put(130.655,72.159){$_{\Upsilon}$}
\put(128.865,26.514){$_{\Upsilon}$}
\put(95.572,98.922){\line(1,0){21.443}}
\put(144.344,98.501){\line(1,0){19.971}}
\multiput(144.453,98.191)(.097943,-.047773){8}{\line(1,0){.097943}}
\multiput(146.02,97.426)(.097943,-.047773){8}{\line(1,0){.097943}}
\multiput(147.587,96.662)(.097943,-.047773){8}{\line(1,0){.097943}}
\multiput(149.155,95.897)(.097943,-.047773){8}{\line(1,0){.097943}}
\multiput(150.722,95.133)(.097943,-.047773){8}{\line(1,0){.097943}}
\multiput(152.289,94.369)(.097943,-.047773){8}{\line(1,0){.097943}}
\put(153.072,93.987){\line(1,0){.9461}}
\put(154.964,93.818){\line(1,0){.9461}}
\put(156.857,93.65){\line(1,0){.9461}}
\put(158.749,93.482){\line(1,0){.9461}}
\put(160.641,93.314){\line(1,0){.9461}}
\multiput(162.533,93.146)(.113444,.043381){7}{\line(1,0){.113444}}
\multiput(164.121,93.753)(.113444,.043381){7}{\line(1,0){.113444}}
\multiput(165.71,94.36)(.113444,.043381){7}{\line(1,0){.113444}}
\multiput(167.298,94.968)(.113444,.043381){7}{\line(1,0){.113444}}
\multiput(168.886,95.575)(.113444,.043381){7}{\line(1,0){.113444}}
\multiput(169.68,95.879)(-.115837,.047184){7}{\line(-1,0){.115837}}
\multiput(168.059,96.539)(-.115837,.047184){7}{\line(-1,0){.115837}}
\multiput(166.437,97.2)(-.115837,.047184){7}{\line(-1,0){.115837}}
\multiput(164.815,97.86)(-.115837,.047184){7}{\line(-1,0){.115837}}
\put(95.629,98.767){\circle*{1.061}}
\put(116.842,98.767){\circle*{1.061}}
\put(144.42,98.414){\circle*{1.061}}
\put(164.218,98.414){\circle*{1.061}}
\put(78.545,16.935){\line(1,0){24.386}}
\put(88.356,17.496){\line(0,1){11.773}}
\put(88.356,29.829){\line(0,1){5.045}}
\multiput(88.356,28.427)(-.088387387,-.047972973){111}{\line(-1,0){.088387387}}
\multiput(158.711,20.579)(-.083941176,-.047967914){187}{\line(-1,0){.083941176}}
\put(78.545,11.89){\line(1,0){24.386}}
\put(143.575,11.329){\line(1,0){24.106}}
\multiput(143.194,12.07)(.068583,.045723){12}{\line(1,0){.068583}}
\multiput(144.84,13.167)(.068583,.045723){12}{\line(1,0){.068583}}
\multiput(146.486,14.264)(.068583,.045723){12}{\line(1,0){.068583}}
\multiput(148.132,15.362)(.068583,.045723){12}{\line(1,0){.068583}}
\multiput(149.778,16.459)(.068583,.045723){12}{\line(1,0){.068583}}
\multiput(151.424,17.556)(.068583,.045723){12}{\line(1,0){.068583}}
\multiput(153.07,18.654)(.068583,.045723){12}{\line(1,0){.068583}}
\multiput(154.716,19.751)(.068583,.045723){12}{\line(1,0){.068583}}
\multiput(156.362,20.848)(.068583,.045723){12}{\line(1,0){.068583}}
\multiput(158.008,21.946)(.068583,.045723){12}{\line(1,0){.068583}}
\multiput(159.654,23.043)(.068583,.045723){12}{\line(1,0){.068583}}
\multiput(161.3,24.14)(.068583,.045723){12}{\line(1,0){.068583}}
\multiput(162.946,25.238)(.068583,.045723){12}{\line(1,0){.068583}}
\multiput(164.592,26.335)(.068583,.045723){12}{\line(1,0){.068583}}
\multiput(166.238,27.433)(.068583,.045723){12}{\line(1,0){.068583}}
\multiput(167.884,28.53)(.068583,.045723){12}{\line(1,0){.068583}}
\multiput(169.53,29.627)(.068583,.045723){12}{\line(1,0){.068583}}
\multiput(171.176,30.725)(.068583,.045723){12}{\line(1,0){.068583}}
\multiput(172.822,31.822)(.068583,.045723){12}{\line(1,0){.068583}}
\multiput(174.468,32.919)(.068583,.045723){12}{\line(1,0){.068583}}
\multiput(176.114,34.017)(.068583,.045723){12}{\line(1,0){.068583}}
\multiput(177.76,35.114)(.068583,.045723){12}{\line(1,0){.068583}}
\multiput(179.406,36.211)(.068583,.045723){12}{\line(1,0){.068583}}
\multiput(181.052,37.309)(.068583,.045723){12}{\line(1,0){.068583}}
\multiput(143.754,10.949)(.31144,-.04153){3}{\line(1,0){.31144}}
\multiput(145.623,10.699)(.31144,-.04153){3}{\line(1,0){.31144}}
\multiput(147.492,10.45)(.31144,-.04153){3}{\line(1,0){.31144}}
\multiput(149.36,10.201)(.31144,-.04153){3}{\line(1,0){.31144}}
\multiput(151.229,9.952)(.31144,-.04153){3}{\line(1,0){.31144}}
\multiput(153.098,9.703)(.31144,-.04153){3}{\line(1,0){.31144}}
\multiput(154.966,9.453)(.31144,-.04153){3}{\line(1,0){.31144}}
\multiput(156.835,9.204)(.31144,-.04153){3}{\line(1,0){.31144}}
\multiput(158.704,8.955)(.31144,-.04153){3}{\line(1,0){.31144}}
\multiput(160.572,8.706)(.31144,-.04153){3}{\line(1,0){.31144}}
\multiput(162.441,8.457)(.31144,-.04153){3}{\line(1,0){.31144}}
\multiput(164.31,8.207)(.31144,-.04153){3}{\line(1,0){.31144}}
\multiput(166.178,7.958)(.31144,-.04153){3}{\line(1,0){.31144}}
\multiput(168.047,7.709)(.31144,-.04153){3}{\line(1,0){.31144}}
\multiput(168.981,7.585)(.063264,.045743){12}{\line(1,0){.063264}}
\multiput(170.5,8.682)(.063264,.045743){12}{\line(1,0){.063264}}
\multiput(172.018,9.78)(.063264,.045743){12}{\line(1,0){.063264}}
\multiput(173.536,10.878)(.063264,.045743){12}{\line(1,0){.063264}}
\multiput(175.055,11.976)(.063264,.045743){12}{\line(1,0){.063264}}
\multiput(176.573,13.074)(.063264,.045743){12}{\line(1,0){.063264}}
\multiput(178.091,14.172)(.063264,.045743){12}{\line(1,0){.063264}}
\multiput(179.61,15.269)(.063264,.045743){12}{\line(1,0){.063264}}
\multiput(181.128,16.367)(.063264,.045743){12}{\line(1,0){.063264}}
\multiput(182.646,17.465)(.063264,.045743){12}{\line(1,0){.063264}}
\multiput(184.165,18.563)(.063264,.045743){12}{\line(1,0){.063264}}
\multiput(185.683,19.661)(.063264,.045743){12}{\line(1,0){.063264}}
\multiput(187.201,20.759)(-.046719,.149991){6}{\line(0,1){.149991}}
\multiput(186.641,22.558)(-.046719,.149991){6}{\line(0,1){.149991}}
\multiput(186.08,24.358)(-.046719,.149991){6}{\line(0,1){.149991}}
\multiput(185.519,26.158)(-.046719,.149991){6}{\line(0,1){.149991}}
\multiput(184.959,27.958)(-.046719,.149991){6}{\line(0,1){.149991}}
\multiput(184.398,29.758)(-.046719,.149991){6}{\line(0,1){.149991}}
\multiput(183.837,31.558)(-.046719,.149991){6}{\line(0,1){.149991}}
\multiput(183.277,33.358)(-.046719,.149991){6}{\line(0,1){.149991}}
\multiput(182.716,35.158)(-.046719,.149991){6}{\line(0,1){.149991}}
\multiput(182.156,36.958)(-.046719,.149991){6}{\line(0,1){.149991}}
\multiput(143.474,60.282)(.0577081,.0456584){13}{\line(1,0){.0577081}}
\multiput(144.975,61.469)(.0577081,.0456584){13}{\line(1,0){.0577081}}
\multiput(146.475,62.656)(.0577081,.0456584){13}{\line(1,0){.0577081}}
\multiput(147.975,63.843)(.0577081,.0456584){13}{\line(1,0){.0577081}}
\multiput(149.476,65.03)(.0577081,.0456584){13}{\line(1,0){.0577081}}
\multiput(150.976,66.217)(.0577081,.0456584){13}{\line(1,0){.0577081}}
\multiput(152.477,67.404)(.0577081,.0456584){13}{\line(1,0){.0577081}}
\multiput(153.977,68.591)(.0577081,.0456584){13}{\line(1,0){.0577081}}
\multiput(155.478,69.778)(.0577081,.0456584){13}{\line(1,0){.0577081}}
\multiput(156.978,70.966)(.0577081,.0456584){13}{\line(1,0){.0577081}}
\multiput(158.478,72.153)(.0577081,.0456584){13}{\line(1,0){.0577081}}
\multiput(159.979,73.34)(.0577081,.0456584){13}{\line(1,0){.0577081}}
\multiput(161.479,74.527)(.0577081,.0456584){13}{\line(1,0){.0577081}}
\multiput(162.98,75.714)(.0577081,.0456584){13}{\line(1,0){.0577081}}
\multiput(164.48,76.901)(.0577081,.0456584){13}{\line(1,0){.0577081}}
\multiput(165.98,78.088)(.0577081,.0456584){13}{\line(1,0){.0577081}}
\multiput(167.481,79.275)(.0577081,.0456584){13}{\line(1,0){.0577081}}
\multiput(143.474,59.16)(.23265,-.03924){4}{\line(1,0){.23265}}
\multiput(145.335,58.846)(.23265,-.03924){4}{\line(1,0){.23265}}
\multiput(147.197,58.532)(.23265,-.03924){4}{\line(1,0){.23265}}
\multiput(149.058,58.218)(.23265,-.03924){4}{\line(1,0){.23265}}
\multiput(150.919,57.904)(.23265,-.03924){4}{\line(1,0){.23265}}
\multiput(152.78,57.59)(.23265,-.03924){4}{\line(1,0){.23265}}
\multiput(154.641,57.276)(.23265,-.03924){4}{\line(1,0){.23265}}
\multiput(156.503,56.962)(.23265,-.03924){4}{\line(1,0){.23265}}
\multiput(158.364,56.648)(.23265,-.03924){4}{\line(1,0){.23265}}
\multiput(160.225,56.334)(.23265,-.03924){4}{\line(1,0){.23265}}
\multiput(162.086,56.02)(.23265,-.03924){4}{\line(1,0){.23265}}
\multiput(163.947,55.706)(.23265,-.03924){4}{\line(1,0){.23265}}
\multiput(165.809,55.392)(.23265,-.03924){4}{\line(1,0){.23265}}
\multiput(166.739,55.236)(.0483869,.0467202){14}{\line(1,0){.0483869}}
\multiput(168.094,56.544)(.0483869,.0467202){14}{\line(1,0){.0483869}}
\multiput(169.449,57.852)(.0483869,.0467202){14}{\line(1,0){.0483869}}
\multiput(170.804,59.16)(.0483869,.0467202){14}{\line(1,0){.0483869}}
\multiput(172.159,60.468)(.0483869,.0467202){14}{\line(1,0){.0483869}}
\multiput(173.513,61.776)(.0483869,.0467202){14}{\line(1,0){.0483869}}
\multiput(174.868,63.085)(-.04215,.128556){7}{\line(0,1){.128556}}
\multiput(174.278,64.884)(-.04215,.128556){7}{\line(0,1){.128556}}
\multiput(173.688,66.684)(-.04215,.128556){7}{\line(0,1){.128556}}
\multiput(173.098,68.484)(-.04215,.128556){7}{\line(0,1){.128556}}
\multiput(172.508,70.284)(-.04215,.128556){7}{\line(0,1){.128556}}
\multiput(171.918,72.083)(-.04215,.128556){7}{\line(0,1){.128556}}
\multiput(171.328,73.883)(-.04215,.128556){7}{\line(0,1){.128556}}
\multiput(170.737,75.683)(-.04215,.128556){7}{\line(0,1){.128556}}
\multiput(170.147,77.483)(-.04215,.128556){7}{\line(0,1){.128556}}
\multiput(169.557,79.283)(-.04215,.128556){7}{\line(0,1){.128556}}
\put(117.227,115.321){\makebox(0,0)[cc]{$_{1}$}}
\put(94.522,102.707){\makebox(0,0)[cc]{$_{1|2}$}}
\put(117.227,103.548){\makebox(0,0)[cc]{$_{2|1}$}}
\put(105.734,84.367){\makebox(0,0)[cc]{$_{3|12}$}}
\put(88.735,64.586){\makebox(0,0)[cc]{$_{1|23}$}}
\put(115.235,65.147){\makebox(0,0)[cc]{$_{2|13}$}}
\put(88.735,75.367){\makebox(0,0)[cc]{$_{13|2}$}}
\put(106.856,55.867){\makebox(0,0)[cc]{$_{12|3}$}}
\put(115.235,75.798){\makebox(0,0)[cc]{$_{23|1}$}}
\put(158.485,26.541){\makebox(0,0)[cc]{$_{\Upsilon(a)}$}}
\put(153.105,73.367){\makebox(0,0)[cc]{$_{\Upsilon(a)}$}}
\put(152.535,101.367){\makebox(0,0)[cc]{$_{\Upsilon(a)}$}}
\put(104.735,69.867){\makebox(0,0)[cc]{$_{a}$}}
\put(106.295,101.306){\makebox(0,0)[cc]{$_{a}$}}
\put(106.295,99.063){\circle*{1}}
\put(153.946,98.503){\circle*{1}}
\put(90.038,56.177){\makebox(0,0)[cc]{$_{1|2|3}$}}
\put(122.833,83.086){\makebox(0,0)[cc]{$_{3|2|1}$}}
\put(74.06,8.246){\makebox(0,0)[cc]{$_{1|2|3|4}$}}
\put(117.507,41.564){\makebox(0,0)[cc]{$_{4|3|2|1}$}}
\put(94.522,80.563){\circle*{2}}
\put(119.469,80.563){\circle*{2}}
\put(94.242,59.821){\circle*{2}}
\put(119.469,59.821){\circle*{2}}
\put(94.242,69.632){\circle*{2}}
\put(119.749,70.192){\circle*{2}}
\put(143.575,60.101){\circle*{2}}
\put(169.082,80.003){\circle*{2}}
\put(117.227,38.238){\circle*{2}}
\put(78.265,11.89){\circle*{2}}
\put(143.295,11.329){\circle*{2}}
\put(181.696,38.238){\circle*{2}}
\put(143.855,115.321){\makebox(0,0)[cc]{$_{0}$}}
\put(95.644,98.783){\circle*{2}}
\put(116.946,98.503){\circle*{2}}
\put(144.135,98.222){\circle*{2}}
\put(164.597,98.503){\circle*{2}}
\put(116.946,118.964){\circle*{2}}
\put(144.416,118.964){\circle*{2}}
\put(156.749,70.753){\circle*{1.121}}
\put(162.635,25.064){\circle*{1.121}}
\multiput(145.436,118.584)(.22497,.04426){4}{\line(1,0){.22497}}
\multiput(147.236,118.938)(.22497,.04426){4}{\line(1,0){.22497}}
\multiput(149.036,119.292)(.22497,.04426){4}{\line(1,0){.22497}}
\multiput(150.836,119.646)(.22497,.04426){4}{\line(1,0){.22497}}
\multiput(152.635,120)(.22497,.04426){4}{\line(1,0){.22497}}
\multiput(154.435,120.354)(.22497,.04426){4}{\line(1,0){.22497}}
\multiput(156.235,120.708)(.22497,.04426){4}{\line(1,0){.22497}}
\multiput(158.035,121.062)(.22497,.04426){4}{\line(1,0){.22497}}
\multiput(159.835,121.416)(.22497,.04426){4}{\line(1,0){.22497}}
\multiput(161.634,121.77)(.22497,.04426){4}{\line(1,0){.22497}}
\multiput(162.534,121.948)(.04485,-.07288){10}{\line(0,-1){.07288}}
\multiput(163.431,120.49)(.04485,-.07288){10}{\line(0,-1){.07288}}
\multiput(164.328,119.032)(.04485,-.07288){10}{\line(0,-1){.07288}}
\multiput(165.225,117.575)(.04485,-.07288){10}{\line(0,-1){.07288}}
\multiput(166.122,116.117)(.04485,-.07288){10}{\line(0,-1){.07288}}
\multiput(167.019,114.66)(-.23025,-.04505){4}{\line(-1,0){.23025}}
\multiput(165.177,114.299)(-.23025,-.04505){4}{\line(-1,0){.23025}}
\multiput(163.335,113.939)(-.23025,-.04505){4}{\line(-1,0){.23025}}
\multiput(161.493,113.578)(-.23025,-.04505){4}{\line(-1,0){.23025}}
\multiput(159.651,113.218)(-.23025,-.04505){4}{\line(-1,0){.23025}}
\multiput(157.809,112.857)(-.23025,-.04505){4}{\line(-1,0){.23025}}
\multiput(155.967,112.497)(-.23025,-.04505){4}{\line(-1,0){.23025}}
\multiput(154.125,112.137)(-.065826,.04672){11}{\line(-1,0){.065826}}
\multiput(152.677,113.164)(-.065826,.04672){11}{\line(-1,0){.065826}}
\multiput(151.229,114.192)(-.065826,.04672){11}{\line(-1,0){.065826}}
\multiput(149.781,115.22)(-.065826,.04672){11}{\line(-1,0){.065826}}
\multiput(148.333,116.248)(-.065826,.04672){11}{\line(-1,0){.065826}}
\multiput(146.884,117.276)(-.065826,.04672){11}{\line(-1,0){.065826}}
\put(94.522,30.95){\line(1,0){10.091}}
\put(111.06,30.95){\line(1,0){6.447}}
\multiput(94.301,62.933)(.114583,.047619){8}{\line(1,0){.114583}}
\multiput(96.134,63.695)(.114583,.047619){8}{\line(1,0){.114583}}
\multiput(97.968,64.457)(.114583,.047619){8}{\line(1,0){.114583}}
\multiput(99.801,65.219)(.114583,.047619){8}{\line(1,0){.114583}}
\multiput(101.634,65.981)(.114583,.047619){8}{\line(1,0){.114583}}
\multiput(103.468,66.743)(.114583,.047619){8}{\line(1,0){.114583}}
\multiput(105.301,67.505)(.114583,.047619){8}{\line(1,0){.114583}}
\multiput(107.134,68.267)(.114583,.047619){8}{\line(1,0){.114583}}
\multiput(108.968,69.028)(.114583,.047619){8}{\line(1,0){.114583}}
\multiput(110.801,69.79)(.114583,.047619){8}{\line(1,0){.114583}}
\multiput(112.634,70.552)(.114583,.047619){8}{\line(1,0){.114583}}
\multiput(114.468,71.314)(.114583,.047619){8}{\line(1,0){.114583}}
\multiput(116.301,72.076)(.114583,.047619){8}{\line(1,0){.114583}}
\multiput(118.134,72.838)(.114583,.047619){8}{\line(1,0){.114583}}

\put(107.467,68.281){\circle*{1.009}}
\put(78.54,23.041){\circle*{1.064}}
\put(88.462,27.918){\circle*{1.009}}
\put(88.126,17.322){\circle*{1.009}}
\put(117.221,25.731){\circle*{.752}}
\put(117.389,25.731){\circle*{1.009}}
\put(117.221,30.777){\circle*{1.009}}
\put(78.54,16.818){\circle*{1.009}}
\put(88.294,21.527){\circle*{1.009}}
\put(102.758,16.65){\circle*{.673}}
\put(88.126,34.645){\circle*{.673}}
\put(94.349,31.113){\circle*{.673}}
\put(107.971,32.29){\line(0,-1){17.322}}
\multiput(102.926,23.545)(.08709284,.04805122){56}{\line(1,0){.08709284}}
\multiput(117.221,25.563)(-.084089642,-.048051224){112}{\line(-1,0){.084089642}}
\multiput(117.221,20.854)(-1.84997,-.04204){4}{\line(-1,0){1.84997}}
\multiput(103.598,20.518)(.92499,-.04204){4}{\line(1,0){.92499}}
\put(106.121,30.777){\line(1,0){1.177}}
\multiput(108.476,30.777)(.67272,.04204){4}{\line(1,0){.67272}}
\multiput(82.307,19.408)(.129428,.044176){7}{\line(1,0){.129428}}
\multiput(84.119,20.027)(.129428,.044176){7}{\line(1,0){.129428}}
\multiput(85.931,20.645)(.129428,.044176){7}{\line(1,0){.129428}}
\multiput(87.743,21.264)(.129428,.044176){7}{\line(1,0){.129428}}
\multiput(89.555,21.882)(.129428,.044176){7}{\line(1,0){.129428}}
\multiput(91.367,22.501)(.129428,.044176){7}{\line(1,0){.129428}}
\multiput(93.179,23.119)(.129428,.044176){7}{\line(1,0){.129428}}
\multiput(94.991,23.738)(.129428,.044176){7}{\line(1,0){.129428}}
\multiput(96.803,24.356)(.129428,.044176){7}{\line(1,0){.129428}}
\multiput(98.615,24.975)(.129428,.044176){7}{\line(1,0){.129428}}
\multiput(100.427,25.593)(.129428,.044176){7}{\line(1,0){.129428}}
\multiput(102.239,26.212)(.129428,.044176){7}{\line(1,0){.129428}}
\multiput(104.051,26.83)(.129428,.044176){7}{\line(1,0){.129428}}
\multiput(105.863,27.448)(.129428,.044176){7}{\line(1,0){.129428}}
\multiput(107.675,28.067)(.129428,.044176){7}{\line(1,0){.129428}}
\multiput(109.487,28.685)(.129428,.044176){7}{\line(1,0){.129428}}
\put(97.544,24.554){\circle*{1.009}}
\put(97.544,26.909){\makebox(0,0)[cc]{$_{a}$}}

\put(83.401,22.034){\makebox(0,0)[cc]{$_{_{1|234}}$}}
\put(112.8,26.5){\makebox(0,0)[cc]{$_{_{234|1}}$}}
\multiput(88.294,21.695)(.08408964,.04805122){70}{\line(1,0){.08408964}}
\put(94.349,24.722){\circle*{.673}}
\put(107.803,20.182){\circle*{1.009}}
\put(107.971,14.968){\circle*{.673}}
\put(107.803,25.395){\circle*{1.009}}
\put(107.971,32.29){\circle*{1.009}}
\put(102.926,23.377){\circle*{.673}}
\end{picture}

\vspace{0.1in}

\begin{center}

Figure 7. The modelling map $\Upsilon$ for $n_0=0,1,2,3.$
\end{center}

\vspace{0.2in}

\subsection{
The chain complexes of  $ \widehat{\mathbf{\Omega}}X$   and $ \widehat{\mathbf{\Lambda}}X$ }

The chain complex $(C_\ast(\widehat{\mathbf{\Omega}}X),d)$
of  $\widehat{\mathbf{\Omega}} X$ is
\[C_\ast(\widehat{\mathbf{\Omega}}
X)=C'_\ast(\widehat{\mathbf{\Omega}} X)/C'_\ast(D(1)),\]
where $ C'_\ast(\widehat{\mathbf{\Omega}} X)$ is
the free $\Bbbk$-module  generated by
the set
$\widehat{\mathbf{\Omega}} X$
and
$D(1)\subset \widehat{\mathbf{\Omega}}X$ denotes the set  of degeneracies arising
from the unit $1\in \widehat{\mathbf{\Omega}} X;$ the differential $d$
for a generator $\bar \sigma  \in  \widehat{\mathbf{\Omega}}_{m-1} X,\, m>1, $  is defined by
\[
  d(\bar \sigma)=
\sum_{A|B\in \overline{P}(m) }(-1)^{\# A} sgn(A,B)\,d_{A|B}(\bar \sigma ),
\]
and extended as  a derivation.
Analogously,
the chain complex $(C_\ast(\widehat{\mathbf{\Lambda}}X),d)$
of  $\widehat{\mathbf{\Lambda}} X$ is
\[C_\ast(\widehat{\mathbf{\Lambda}}
X)=C'_\ast(\widehat{\mathbf{\Lambda}} X)/C'_\ast(D(1)),\ \  D(1)\subset \widehat{\mathbf{\Omega}}X   \subset \widehat{\mathbf{\Lambda}} X, \]
while  the differential $d=\{\partial_n\}_{n\geq 1}$ with
$\partial_n:   C_n(\widehat{\mathbf{\Lambda}}X)  \rightarrow
C_{n-1}(\widehat{\mathbf{\Lambda}}X)    $
is given
by
 \begin{equation}\label{sign}
   \partial_n=\bigoplus_{\substack{{n=n_0+r}\\{n_0,r\geq 0\,;\,k\geq 1}}}\,
 \partial_{n_0,r,k}
   \end{equation}
   in which $\partial_{n_0,r,k}$ acts on $   C_\ast(\widehat{\mathbf{\Lambda}}_{n_0,r,k}X) $
and is defined by
\begin{multline}\label{Pset}
  \partial_{n_0,r,k}=
\sum_{A|B\in \overline{P}'(n_0) }(-1)^{\# A}
sgn(A,B)\,d_{A|B}+\\
\hspace{0.5in} \sum_{C|D\in \overline{P}''(n_0)}
(-1)^{
(\#C+1)( \# D+r)} sgn(C,D)\, d^{op}_{C|D}+
\\
  \sum_{\substack{A_i|B_i\in P(n_i)\\ 1\leq i\leq k}}
  (-1)^{\# A_i+n_0+r_{i-1}} sgn(A_i,B_i)\,d_{A_i|B_i}.
\end{multline}
Thus, the three kinds of summand  in above formula represents $d$  as the sum \[d=d_1+d_2+d_3.\]
Furthermore,   (\ref{DeltaP}) induces the coproduct
 \begin{equation}\label{Lambdadiag}
 \Delta_{\mathbf{\Lambda}}:
 C^{\diamond}_\ast(\widehat{\mathbf{\Lambda}}X)
 \rightarrow
 C^{\diamond}_\ast(\widehat{\mathbf{\Lambda}}X)\otimes
 C^{\diamond}_\ast(\widehat{\mathbf{\Lambda}}X)
  \end{equation}
  making  $C^{\diamond}_\ast(\widehat{\mathbf{\Lambda}}X)$ as a dg
  (non-coassociative) coalgebra.

\begin{remark}\label{permufaces}
1.  Note that both $\widehat{\mathbf{\Omega}}X$  and
    $\widehat{\mathbf{\Lambda}}X$ are permutahedral sets, while the sign of the second summand in (\ref{Pset}) is not the standard permutahedral sign; instead it agrees with the one of differential of the coHochschild complex   of $C_\ast(X)$ (cf. Theorem \ref{hat-coHoch}  below);

2. The relations among permutahedral set  face operators obtained via morphisms of (closed) cubical
necklical sets rely on
the coassociativity of the cubical diagonal, and the exposition is
more transparent rather than  the one  in \cite{KStwisted}.
\end{remark}

\subsection{The $\Cap$ -- product  on $C^\diamond(\widehat{\mathbf{\Lambda}}X) $} \label{circledcirc}

Given a cube   $u\in X_{m},$ a  \emph{cubical edge-path}  $\lambda_u$
from $\min u $ to $\max u$
 is defined  for $m=1$ as  $\lambda_u=u$  and for $m>1$
as
the composition $\lambda_u=u_1\ast\cdots \ast u_m$ of edges
where
  $u_1=d^0_2\circ \cdots \circ d^0_m(u),$
    $u_i= d^1_{A_i}d^0_{B_i}(u)$
 for $(A_i,B_i)=(\{1,...,i-1\},\{i+1,...,m\})$  with $1<i<m,$
    and    $u_m=d^1_1\circ \cdots  \circ  d^1_{m-1}(u).$
Given two vertices $a,b\in X_0,$ fix an
 edge-path $\lambda_{a,b}$   from  $a$ to $b$ as a composition of
 cubical edge-paths $\lambda_u.$

For two elements  $a\in\widehat{\mathbf{\Omega}}(X;x_1,x_2)$
and $b\in\widehat{\mathbf{\Omega}}(X; y_1,y_2)$
 define the product $a b\in \widehat{\mathbf{\Omega}}(X;x_1,y_2)$ as   the concatenation
 $a \, \bar \lambda_{x_2,y_1}\,b,$ 
 but  usually we omit the edge-path
$ \lambda_{x_2,y_1}.$  An element of $\widehat{\mathbf{\Lambda}}X $
is usually denoted by $u]a$  for $u\in X$  and $a\in \widehat{\mathbf{\Omega}}X, $ while denote $u]:= u]1$
for the unit $a=1\in \widehat{\mathbf{\Omega}}X.$  
Define the $\Cap$ -- product
\[\Cap :
C^{\diamond}_{p,s}(\widehat{\mathbf{\Lambda}}X)\otimes
C^{\diamond}_{q,t}(\widehat{\mathbf{\Lambda}}X) \rightarrow
C^{\diamond}_{p+q-n, s+t}(\widehat{\mathbf{\Lambda}}X)\] for
elementary chain pair
 $       \alpha\otimes \beta       \in C^{\diamond}(\widehat{\mathbf{\Lambda}}X)\otimes
C^{\diamond}(\widehat{\mathbf{\Lambda}}X)$
   by
   \begin{equation}\label{capformula}
 \alpha\,\Cap  \beta=\left\{
  \begin{array}{llll}
  (-1)^{|a||v|}\, u\sqcap \, v\,]\,ab,          &   \alpha= u\,]\,a,  \,\, \beta=e_w\,]\,b  ,
  \vspace{1mm}\\
  -d_2(\alpha\Cap e_w\,]\,b\,)+  \alpha\Cap d_2(e_w\,]\,b\,), &
   \beta=d_{ \underline {q}\smallsetminus \underline{1}\,\mid\, \underline 1} (e_w\,]\,b\,),\, q=|e_w|.
     \end{array}
   \right.
 \end{equation}

\begin{proposition}\label{algebra2}
 The  product \[\Cap :
C^{\diamond}_{p,s}(\widehat{\mathbf{\Lambda}}X)\otimes
C^{\diamond}_{q,t}(\widehat{\mathbf{\Lambda}}X) \rightarrow
C^{\diamond}_{p+q-n, s+t}(\widehat{\mathbf{\Lambda}}X)\] for
 $       \alpha\otimes \beta       \in C^{\diamond}_{p,s}(\widehat{\mathbf{\Lambda}}X)\otimes
C^{\diamond}_{q,t}(\widehat{\mathbf{\Lambda}}X)$
satisfies the equality
\begin{equation}\label{leibnitz}
 d( \alpha\Cap \beta )= (-1)^{n+q}\,  d\alpha\Cap \beta +    \alpha\Cap d\beta.
\end{equation}
  \end{proposition}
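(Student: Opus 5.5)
The plan is to copy the proof of Proposition \ref{algebra}, with the cubical face operators $d^0,d^1$ on $X$ replaced by the three pieces $d=d_1+d_2+d_3$ of the differential (\ref{Pset}) on $C^{\diamond}_\ast(\widehat{\mathbf{\Lambda}}X)$. The dictionary comes from Remark \ref{freetypes} and formula (\ref{Pset}). The face $d_{A|B}$, $A|B\in\overline{P}'(n_0)$, keeps $d^0_B(u)$ in the base and moves $d^1_A(u)$ into the loop word. The face $d^{op}_{C|D}$ keeps $d^1_C(u)$ and moves $d^0_D(u)$ into the loop word. The face $d_{A_i|B_i}$ acts only on the $\widehat{\mathbf{\Omega}}$-part.

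Write $\alpha=u]a$ and $\beta=e_w]b$, so that $\alpha\Cap\beta=\pm\,(u\sqcap e_w)]ab=\pm\, d^1_{A_u}(u)]ab$ when $w=d^0_{B_u}(u)$. I will prove (\ref{leibnitz}) componentwise, for $d_3$, $d_1$ and $d_2$ separately.

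\textbf{Case $d_3$.} Here $d_3$ is a derivation on the monoid $\widehat{\mathbf{\Omega}}X$ up to sign, and the product $ab$ in $\widehat{\mathbf{\Omega}}X$ is concatenation, the fixed edge path $\bar\lambda$ being made of cubical edge paths and so contributing nothing. Hence $d_3(\alpha\Cap\beta)$ splits into the $a$-part and the $b$-part. These are $d_3\alpha\Cap\beta$ and $\alpha\Cap d_3\beta$ respectively. The only thing to check is the Koszul sign $(-1)^{|a||v|}$ in (\ref{capformula}) against the sign $(-1)^{\#A_i+n_0+r_{i-1}}$ in (\ref{Pset}). Since $|u\sqcap v|=p'+q'-n$, this produces the factor $(-1)^{n+q}$.

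\textbf{Case $d_1$.} The summands of $d_1(\alpha\Cap\beta)$ are $d^0_B(u\sqcap e_w)]\,\overline{d^1_A(u\sqcap e_w)}\,ab$. I will use the Cartesian identity already used in Proposition \ref{serrediagonal}(i1): $u=w\times(u\sqcap e_w)$, so the $d^1_A$-faces of $u\sqcap e_w$ are $d^1$-faces of $u$, and the $d^0_B$-part satisfies $d^0_B(u\sqcap e_w)=d^0_{B}(u)\sqcap e_w$. This gives a term-by-term identification with the summands of $(d_1\alpha)\Cap\beta$. The summands of $d_1\alpha$ with $w\not\subset d^0_B(u)$ vanish by the second line of the definition of $\sqcap$, exactly as in case 1a of Proposition \ref{algebra}. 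One must also check that $\alpha\Cap d_1\beta$ contributes nothing. This holds because $d_1(e_w]b)$ moves $d^1$-faces of $e_w$ into the loop word and so leaves base cubes of the form $d^0_B(e_w)$, which are not of the form $e_{w'}$ and pair to zero except through $d^0_1$, which is handled below.

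\textbf{Case $d_2$.} This is the analogue of case 1b, and the second line of (\ref{capformula}) is designed exactly for it. By definition, $\alpha\Cap d_{\underline q\smallsetminus\underline 1|\underline 1}(e_w]b)=-d_2(\alpha\Cap e_w]b)+\alpha\Cap d_2(e_w]b)$, and rearranging gives the $d_2$-component of (\ref{leibnitz}) for $\beta=e_w]b$. One still has to show $d_2\alpha\Cap\beta=0$. This follows because $w\not\subset\partial^1 u$, so the base cubes $d^1_C(u)$ pair trivially with $e_w$. Then, for $\beta=d_{\underline q\smallsetminus\underline1|\underline1}(e_w]b)$, I will repeat cases 2a and 2b of Proposition \ref{algebra} verbatim, using $d^2=0$ in $C^\diamond_\ast(\widehat{\mathbf{\Lambda}}X)$, i.e.\ the relations $d_1d_2=-d_2d_1$, $d_2d_3=-d_3d_2$, etc., in place of $d^0d^1=-d^1d^0$.

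The main obstacle I expect is the sign bookkeeping in the $d_2$ case. The exponent $(\#C+1)(\#D+r)$ in (\ref{Pset}) is not the permutahedral sign, and the loop word is multiplied by $\overline{d^0_D(u)}$ on the right rather than on the left. Both features interact with the factor $(-1)^{|a||v|}$. I would first settle the case $a=b=1$, where everything reduces to Proposition \ref{algebra} via $\zeta$. Then I would treat general $a,b$ by tracking only the parity shift $r\mapsto r+|b|$ together with $(-1)^{|a||v|}$.
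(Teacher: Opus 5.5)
Your decomposition is the paper's own: split $d=d_1+d_2+d_3$. For $\beta=e_w]b$ you handle the $d_1$-part with the Cartesian identity behind (\ref{left}), and the $d_2$-part with the defining recursion in (\ref{capformula}) together with $d_2\alpha\Cap\beta=0$. You then redo cases 2a--2b of Proposition~\ref{algebra} for $\beta=d_{\underline q\smallsetminus\underline 1|\underline 1}(e_w]b)$ and treat $d_3$ as a derivation. The last step is where the argument breaks.

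The relation $d_2d_3=-d_3d_2$ that you list is false. So is $d_2^2=0$, which a verbatim copy of case 2b needs: there $d^1d^1=0$ is used twice, to discard $d^1(-d^1(u\sqcap e_w))$ and to reduce $u\sqcap(d^0+d^1)d^1(e_w)$ to $u\sqcap d^0_1d^1(e_w)$. What $d^2=0$ actually gives in $C^{\diamond}_\ast(\widehat{\mathbf{\Lambda}}X)$ is $d_1d_2+d_2d_1=0$, $d_3^2=0$, and $d_i^2+d_id_3+d_3d_i=0$ for $i=1,2$. For a concrete instance, take a $2$-cube $u$. Then $d_2^2(u]a)$ is a signed sum of $\max u]a\,\overline{d^0_2u}\,\overline{d^1_1u}$ and $\max u]a\,\overline{d^0_1u}\,\overline{d^1_2u}$, the two edge paths of $u$. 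These terms are cancelled only by $d_3$ acting on the new letter $\bar u$ in the summand $\max u]a\,\bar u$ of $d_2(u]a)$.

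Consequently, for $\beta=d_{\underline q\smallsetminus\underline 1|\underline 1}(e_w]b)$ you cannot split off the $d_3$-component. Here $\alpha\Cap\beta=-d_2(\alpha\Cap e_w]b)+\alpha\Cap d_2(e_w]b)$ has loop words ending in letters created by $d_2$. Your ``$a$-part plus $b$-part'' argument is valid only when $\alpha\Cap\beta=\pm(u\sqcap e_w)]ab$, so it does not cover this case. Moreover, the $d_3$-terms acting on those new letters are exactly what absorbs the nonzero $d_2d_2$-terms.

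To repair this, treat $d_2+d_3$ together. Put $\gamma=\alpha\Cap e_w]b$ and use $-(d_2+d_3)d_2\gamma=d_2d_3\gamma$ and $(d_2+d_3)d_2(e_w]b)=-d_2d_3(e_w]b)$. Then insert case (i) for $d_3\gamma$, and rewrite $d_2(d_3\alpha\Cap e_w]b)$ and $d_2(\alpha\Cap e_w]d_3b)$ by the defining recursion. The paper's step (ii2) glosses over the same point: it drops $d_2d_2(\alpha\Cap e_w]b)$ and then declares the $d_3$-part obvious. So you inherit this lacuna rather than introduce it, but as you state it the step fails.

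Separately, your proposed reduction of the case $a=b=1$ to Proposition~\ref{algebra} via $\zeta$ cannot settle the signs. The map $\zeta_\ast$ annihilates every cell whose loop word has positive degree, e.g.\ $\min v]\bar v$ with $\dim v\ge 2$, which already occurs in $d_1((u\sqcap e_w)])$. It therefore detects only part of (\ref{leibnitz}).
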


\begin{proof}
The proof is similar to that of Proposition \ref{algebra}. Indeed,
Consider $d_\epsilon ( \alpha\Cap \beta )$  for   $ \epsilon=1,2,3.$

(i) Let $(\alpha,\beta)=(u]a,e_w]b).$

(i1) $\epsilon=1.$ For a component $d_{A|B}(u\sqcap  e_w]ab)$ of $d_1( u]a\Cap e_w]b )$ we have
\begin{multline*}
d_{A|B}(u\sqcap  e_w]ab) =u_{B}]\bar u_{A} ab\ \   \text{with}\\
u=w\times (u\sqcap  e_w)=w\times u_B\times u_A= u_D\times u_A\ \ \text{and}\ \   u_D\sqcap e_w= u_{B}.
\end{multline*}
 Consequently,
\begin{multline*}
  d_{A|B}(u\sqcap e_w]\,ab) =u_{B}]\bar u_{A} ab =u_D\sqcap e_w]\bar u_A\,ab=\\
  u_D]\bar u_A \,a\,\Cap e_w]b =d_{A|D}(u]a)\Cap e_w]b,
 \end{multline*}
 and then
 \[  d_1(\alpha\Cap \beta)=(-1)^{n+q} d_1(\alpha)\Cap\beta.\]
 In other words, the above equality follows from (\ref{left}).
The definition of the $\Cap$ -- product implies that $d^{op}_{C|D}(u]a)\Cap' e_w]b=0$ for all $C|D,$ hence,
$
d_2(\alpha) \Cap e_w]v =0,
$
 and, consequently,
\[  d_1(\alpha\Cap \beta)=(-1)^{n+q}(d_1+d_2)(\alpha)\Cap\beta.\]

 (i2)  $\epsilon=2.$ We have
  \[d_2(\alpha\Cap\beta)= \alpha\Cap (d_1+ d_2)(\beta),\]
 because of  the second item of the definition of the $\Cap$ -- product
 and
 the equality
   $\alpha\Cap' d_{A|B}(e_w]\,b\,) = 0$ for any component  $ d_{A|B}(e_w]\,b\,)$  of $d_1(e_w]\,b\,) $  unless
$A|B=  \underline {q}\smallsetminus \underline{1}\,|\,\underline{1}. $
Furthermore,  the definition of the $\Cap$ -- product implies that $d^{op}_{C|D}(u]a)\Cap' e_w]b=0$ for all $C|D,$ hence,
$
d_2(\alpha) \Cap e_w]v =0,
$
 and then
\[  (d_1+d_2)(\alpha\Cap \beta)=(-1)^{n+q}(d_1+d_2)(\alpha)\Cap\beta+\alpha \Cap (d_1+d_2)(\beta).\]

(ii) Let
$(\alpha,\beta)=(u]a\,,  d_{ \underline {q}\smallsetminus \underline{1}\,\mid \, \underline{1}}(e_w]\,b));$ in particular, $|\beta|=q-1.$

(ii1).  $\epsilon=1.$ Then
\begin{multline*}
d_1(\alpha\Cap  \beta)= d_1(-d_2(\alpha\Cap e_w]b )+ \alpha\Cap d_2(e_w]b))=
d_2d_1(\alpha\Cap e_w]b )+ d_1(\alpha\Cap d_2(e_w]b))=
\\
(-1)^{n+q}\,d_2(d_1(\alpha)\Cap e_w]b) -(-1)^{n+q}d_1(\alpha)\Cap d_2(e_w]b)=\\
(-1)^{n+q}\,d_1(\alpha)\Cap d_2(e_w]b)- (-1)^{n+q}\,d_1(\alpha)\Cap \beta
 -
 (-1)^{n+q}\,d_1(\alpha)\Cap d_2(e_w]b)=\\-(-1)^{n+q}\, d_1(\alpha)\Cap \beta.
\end{multline*}

 (ii2) $\epsilon=2.$ Then
\begin{multline*}
d_2(\alpha\Cap \beta)=d_2(-d_2(\alpha\Cap e_w]b )+ \alpha\Cap d_2(e_w]b))=d_2(\alpha\Cap d_2(e_w]b))=
\\
\alpha\Cap(d_1+d_2)(d_2(e_w]b))=\alpha\Cap d_1d_2(e_w]b)=-
\alpha\Cap d_2d_1(e_w]b)=-
\alpha\Cap d_2(\beta).
\end{multline*}

Since $     d_2(\alpha)\Cap \beta=0 $  (because $w\nsubseteq d^1_{A}(u),$ neither $A$) and   $ \alpha\Cap d_1(\beta)=0=
 - u\Cap d_1d_1(e_w]b),    $ obtain
\[ (d_1 +d_2)(\alpha \Cap \beta) = (-1)^{n+q+1} (d_1+d_2)(\alpha)\Cap \beta +\alpha\Cap (d_1+d_2)(\beta).  \]

Finally, the verification of $d_3$ is a $\Cap$ -- derivation   is obvious.
\end{proof}
Note   that by the cellular map (\ref{varsigma}) we have $\varsigma_{q}\left( d_{ \underline {q}\smallsetminus \underline{1}\,\mid \, \underline 1}(v]\,\,)\right)=
d^0_1(v)$ for a cube $v\in K^{\Box}_q.$

\begin{proposition}\label{Capcommut}
    The  product
   $\Cap : H_p(\widehat{\mathbf{\Lambda}}X)\otimes
   H_{q}(\widehat{\mathbf{\Lambda}}X) \rightarrow
   H_{p+q-n}(\widehat{\mathbf{\Lambda}}X)$
    is commutative and associative.
  \end{proposition}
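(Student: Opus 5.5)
I would follow the pattern of the proof of Proposition \ref{commut}, transporting it to the closed necklical setting. Passing from $X$ to $X^{op}$ (interchanging $d^0_i$ and $d^1_i$) induces an isomorphism of permutahedral sets $\widehat{\mathbf{\Lambda}}X\to\widehat{\mathbf{\Lambda}}X^{op}$. On $u]a$ it reverses the roles of the two kinds of boundary: $d_{A|B}$ (which splits off $d^1_A(u)$ to the front of the necklace) is exchanged with $d^{op}_{C|D}$ (which splits off $d^0_D(u)$ to the back). Up to the sign conventions of (\ref{Pset}), it therefore induces a chain isomorphism $\iota:C^\diamond_\ast(\widehat{\mathbf{\Lambda}}X)\to C^\diamond_\ast(\widehat{\mathbf{\Lambda}}X^{op})$. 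I would then define $\Cap^{op}$ on $C^\diamond_\ast(\widehat{\mathbf{\Lambda}}X^{op})$ by the mirror of (\ref{capformula}). Its first clause is $e^{op}_w]a\,\Cap^{op}\, v]b=\pm\, e^{op}_w\sqcap^{op}v\,]\,ab$, and its second clause is defined recursively through $d_1$ instead of $d_2$. The Leibniz rule for $\Cap^{op}$ follows by the argument of Proposition \ref{algebra2} with $d_1$ and $d_2$ swapped.

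The key identity to check on generators is
\[(\alpha\Cap\beta)^{op}=\pm\,\beta^{op}\,\Cap^{op}\alpha^{op},\]
which mirrors $(u\sqcap v)^{op}=(-1)^{pq}v^{op}\sqcap^{op}u^{op}$. On the cube coordinate this is exactly that identity. On the loop coordinate, $ab$ and $ba$ differ, so the equality holds only after identifying $\widehat{\mathbf{\Omega}}X$ products up to the conjugation used in the proof of Theorem \ref{freeloopmodel}. Alternatively, I would build a chain homotopy rotating $b$ around the loop through $u\sqcap v$, using $d_{\varnothing|\underline{m}}$ and $d^{op}_{\underline{m}|\varnothing}$ faces. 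I would then show that $\Cap$ and $\Cap^{op}$ agree in homology. Both are lifts, via $\Upsilon$ and the quasi-fibration of Proposition \ref{quasifree}, of the same fibrewise intersection over $|X|$: by Proposition \ref{classical}, $\sqcap$ and $\sqcap^{op}$ induce the same product, and the loop parts are concatenated compatibly. Combining this agreement with the key identity gives graded commutativity.

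For associativity, I would compare $(\alpha\Cap\beta)\Cap\gamma$ with $\alpha\Cap(\beta\Cap\gamma)$ on generators $u]a$, $e_w]b$, $e_{w'}]c$. The cube component reduces to chain-level associativity of $\sqcap$ up to homotopy (Proposition \ref{commut}). The loop component is $abc$ in both cases, since concatenation in $\widehat{\mathbf{\Omega}}X$ is associative. The inserted edge-paths $\lambda_{x,y}$ may differ, but they are homotopic rel endpoints. The recursive second clause of (\ref{capformula}) is then handled by induction on the face, using (\ref{leibnitz}).

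The main obstacle is the commutativity step. The $\Cap$-product concatenates the loops in the order $ab$, whereas the opposite product naturally produces $ba$, and the combinatorial chain homotopy identifying them in $H_\ast(\widehat{\mathbf{\Lambda}}X)$ is not formal. I would first try to realize it as the boundary of the chain $u\sqcap v\,]\,a\,\Cap$-type cells of $\widehat{\mathbf{\Lambda}}X$ that sweep $b$ from the front to the back of the necklace. Failing that, I would argue geometrically through $\Upsilon$, comparing with the Chas--Sullivan product on $H_\ast(\Lambda M)$.
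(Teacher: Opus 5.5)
\textbf{The proposal has a genuine gap at the commutativity step, which is the central step.} You assert the key identity $(\alpha\Cap\beta)^{op}=\pm\beta^{op}\Cap^{op}\alpha^{op}$ only ``up to conjugation'', because you believe the loop coordinates come out as $ab$ versus $ba$. You then hand the identification to a chain homotopy that you never construct, or to a geometric comparison. Neither fallback is available as stated:
\begin{itemize}
\item The relation $\bar y\sim\bar x\bar y\bar x^{-1}$ in the proof of Theorem \ref{freeloopmodel} only describes $\pi_0$. It gives no chain-level relation between positive-dimensional chains such as $u\sqcap v\,]\,ab$ and $u\sqcap v\,]\,ba$.
\item The route through $\Upsilon$ and the Chas--Sullivan product presupposes that $\Cap$ is already identified with the string topology product in homology. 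Neither the paper nor your sketch proves that.
\end{itemize}
So the step you yourself call the main obstacle is left open.

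\textbf{The missing idea is that this obstacle does not exist.} In $X^{op}$ every cube has its $\min$ and $\max$ vertices interchanged. A string of beads $\bar a_1\cdots\bar a_k$ with $\max a_i=\min a_{i+1}$ is therefore composable in $X^{op}$ only in the reversed order. The map must be
\[
\iota(u]\bar a_1\cdots\bar a_k)=u^{op}]\,\overline{a_k^{op}}\cdots\overline{a_1^{op}}.
\]
Your own claim that $d_{A|B}$ (splitting a face off to the front) is exchanged with $d^{op}_{C|D}$ (splitting it off to the back) already requires this reversal. Hence $\iota$ is an anti-homomorphism on the $\widehat{\mathbf{\Omega}}$-factor, $\iota(ab)=\iota(b)\iota(a)$.

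Keep $\Cap^{op}$ concatenating in the same order, $u]a\Cap^{op}v]b=u\sqcap^{op}v\,]\,ab$, as you proposed. Then one gets on the nose
\[
\iota(u]a\Cap v]b)=\pm(u\sqcap v)^{op}]\,\iota(b)\iota(a)=\pm\,v^{op}\sqcap^{op}u^{op}\,]\,\iota(b)\iota(a)=\pm\,\iota(v]b)\Cap^{op}\iota(u]a),
\]
with total sign $(-1)^{pq}$, using $(u\sqcap v)^{op}=(-1)^{pq}v^{op}\sqcap^{op}u^{op}$. This strict identity is exactly the paper's argument. The remaining input, that $\Cap$ and $\Cap^{op}$ correspond in homology under $\iota$, is the same one used for Proposition \ref{commut}, and then $\iota$ being an isomorphism in homology gives commutativity.

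Your associativity sketch (loop part $abc$ in both bracketings, cube part reduced to associativity of $\sqcap$) is consistent with what the paper calls straightforward.
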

 \begin{proof}  Let $(X^{op},\widetilde{d}^\epsilon_i) $ be the cubical set as
in the proof of Proposition  \ref{commut}.
Denote $\iota :  \widehat{\mathbf \Lambda}X\rightarrow
\widehat{\mathbf \Lambda}X^{op},\,\,$
 $ u\,]\,\bar a_1\cdots \bar a_k\rightarrow
u^{op}\,]\,\overline {a_k^{op}}\cdots \overline {a_1^{op}}, $ and
define \[\Cap^{op}: C_\ast(\widehat{\mathbf \Lambda}X^{op})\otimes C_\ast(\widehat{\mathbf \Lambda}X^{op})\rightarrow
C_\ast(\widehat{\mathbf \Lambda}X^{op})  \]
 by $u]a \Cap^{op} v]b= u\sqcap ^{op} v\,]\,ab.$ Then
 for $u]a \otimes v]b \in C_p(\widehat{\mathbf \Lambda}X) \otimes C_q(\widehat{\mathbf \Lambda}X)$
 \[  \iota(u]a \Cap v]b)=(-1)^{pq}  \iota(v]b)\Cap^{op} \iota(u]a).  \]
Since $\iota$  induces an isomorphism in homology, the commutativity follows. To check the associativity is straightforward.
 \end{proof}

\subsection{The twisted tensor product  $C^{\diamond}_\ast(\widehat{\mathbf{\Lambda}} X)\otimes _\tau
 C^{\diamond}_\ast(\widehat{\mathbf{\Omega}} X) $ }

 Consider the tensor product of chain complexes $C^{\diamond}_\ast(\widehat{\mathbf{\Lambda}} X)\otimes
 C^{\diamond}_\ast(\widehat{\mathbf{\Omega}} X) .$
 Using Sweedler's notations $\Delta(u)=u_{(1)}\otimes u_{(2)}$ and $\Delta(u)=u'\otimes u''$ as well
 define a map $\Theta=\Theta_1+\Theta_2,$
 \[ \Theta_i:  C^{\diamond}_\ast(\widehat{\mathbf{\Lambda}} X) \otimes
 C^{\diamond}_\ast(\widehat{\mathbf{\Omega}} X)  \rightarrow
 C^{\diamond}_\ast(\widehat{\mathbf{\Lambda}} X) \otimes
 C^{\diamond}_\ast(\widehat{\mathbf{\Omega}} X),\ \ i=1,2   \]
 for $  u]a\otimes b \in   C^{\diamond}_\ast(\widehat{\mathbf{\Lambda}} X)\otimes
 C^{\diamond}_\ast(\widehat{\mathbf{\Omega}} X) $  by
 \[\Theta_1(u]a\otimes b)= u_{(1)}\,] (\bar u_{(2)})^{'}\!a \otimes  (\bar u_{(2)})^{''} b\ \
  \text{and}\ \   \Theta_2(u]a\otimes b)=  u_{(2)}\,]\, a (\bar u_{(1)})^{'}\!\otimes\,  b \,(\bar u_{(1)})^{''}\!\!.  \]
   Then the tensor product of modules  $ C^{\diamond}_\ast(\widehat{\mathbf{\Lambda}} X)\otimes
 C^{\diamond}_\ast(\widehat{\mathbf{\Omega}} X)$   with the differential
  $d_\Theta:= d_{\mathbf{\Lambda}} \otimes 1 +1\otimes d_{\mathbf{\Omega}}+\Theta   $
  is denoted by
  $C^{\diamond}_\ast(\widehat{\mathbf{\Lambda}} X)\otimes _\tau
 C^{\diamond}_\ast(\widehat{\mathbf{\Omega}} X). $ The equality  $d^2_\Theta=0$
 uses the fact that the coproduct $\Delta_{\mathbf{\Omega}}:
 C^{\diamond}_\ast(\widehat{\mathbf{\Omega}}X)
 \rightarrow 
 C^{\diamond}_\ast(\widehat{\mathbf{\Omega}}X)
 \otimes
 C^{\diamond}_\ast(\widehat{\mathbf{\Omega}} X)
 $ is chain; in particular, $\Theta\circ \Theta=0,$ and $d_i\otimes 1$ is a summand component of $\Theta_i$
 for $i=1,2.$

 Define the chain maps
 \[\nu_r : C^{\diamond}_\ast(\widehat{\mathbf{\Lambda}} X)   \rightarrow   C^{\diamond}_\ast(\widehat{\mathbf{\Lambda}} X)\otimes _\tau
 C^{\diamond}_\ast(\widehat{\mathbf{\Omega}} X)\ \ \text{and}\ \
 \nu_l : C^{\diamond}_\ast(\widehat{\mathbf{\Lambda}} X)   \rightarrow   C^{\diamond}_\ast(\widehat{\mathbf{\Omega}} X)\otimes _\tau
 C^{\diamond}_\ast(\widehat{\mathbf{\Lambda}} X)    \]
and
\[ \mu_r: C^{\diamond}_\ast(\widehat{\mathbf{\Lambda}} X)\otimes _\tau
 C^{\diamond}_\ast(\widehat{\mathbf{\Omega}} X)\rightarrow   C^{\diamond}_\ast(\widehat{\mathbf{\Lambda}} X)\ \  \text{and}\ \
  \mu_l: C^{\diamond}_\ast(\widehat{\mathbf{\Omega}} X)\otimes _\tau
 C^{\diamond}_\ast(\widehat{\mathbf{\Lambda}} X)\rightarrow   C^{\diamond}_\ast(\widehat{\mathbf{\Lambda}} X)  \]
 as follows. For  $u]a\in  C^{\diamond}_\ast(\widehat{\mathbf{\Lambda}} X)  ,$ let
 \begin{equation}\label{nu}
 \nu_r( u]a)= u] a'\otimes a''   \ \     \text{and} \ \
 \nu_l( u]a)= a'\otimes u] a''  ,
 \end{equation}
 and for
  $  \alpha  \in   C^{\diamond}_\ast(\widehat{\mathbf{\Lambda}} X)\otimes _\tau
 C^{\diamond}_\ast(\widehat{\mathbf{\Omega}} X)$  and
 $  \beta \in   C^{\diamond}_\ast(\widehat{\mathbf{\Omega}} X)\otimes _\tau
 C^{\diamond}_\ast(\widehat{\mathbf{\Lambda}} X),$ let
 \begin{equation}\label{mur}
    \mu_r( \alpha  )= \left\{ \!\! \begin{array}{llll}
                                u]ab , & \alpha= u]a\otimes b \notin\operatorname{Im}\Theta ,\vspace{1mm} \\
                             u_{(1)}\,] \bar u_{(2)}a b
                                    +  u_{(2)}\,]\, a  b\,\bar
                                    u_{(1)} , &  \alpha=
                                 u_{(1)}\,] (\bar u_{(2)})'\!
                                 a \otimes  (\bar u_{(2)})'' b\,
    + \vspace{1mm} \\
    & \hspace{0.27in} u_{(2)}\,]\, a (\bar u_{(1)})'\!\otimes\,  b \,(\bar u_{(1)})''\in \operatorname{Im}\Theta

                               \end{array} \right.               \end{equation}
 and
 \begin{equation}\label{mul}
   \mu_l( \beta  )= \left\{\!\! \begin{array}{llll}
                                u]ab , & \beta=a\otimes  u]b \notin\operatorname{Im}\Theta ,\vspace{1mm} \\
                            u_{(1)}\,] \bar u_{(2)}  ab
                                    +   u_{(2)}\,]\, ab\,\bar u_{(1)}, &  \beta=
                            (\bar u_{(2)})' a\otimes u_{(1)}\,] (\bar u_{(2)})'' \,b\,

    + \vspace{1mm} \\
    & \hspace{0.27in}   a \,(\bar u_{(1)})'\otimes u_{(2)}\,]\, b\,(\bar u_{(1)})''\in \operatorname{Im}\Theta .

                               \end{array} \right.
    \end{equation}

Also there are "an extended switch chain maps"
\[ \mathcal {T}_r:   C^{\diamond}_\ast(\widehat{\mathbf{\Lambda}} X)   \otimes C^{\diamond}_\ast(\widehat{\mathbf{\Lambda}} X)\otimes _\tau
 C^{\diamond}_\ast(\widehat{\mathbf{\Omega}} X)\rightarrow
  C^{\diamond}_\ast(\widehat{\mathbf{\Lambda}} X)   \otimes C^{\diamond}_\ast(\widehat{\mathbf{\Lambda}} X)\otimes _\tau
 C^{\diamond}_\ast(\widehat{\mathbf{\Omega}} X)  \]
 and
 \[ \mathcal {T}_l:
 C^{\diamond}_\ast(\widehat{\mathbf{\Omega}} X) \otimes_\tau C^{\diamond}_\ast(\widehat{\mathbf{\Lambda}} X)   \otimes C^{\diamond}_\ast(\widehat{\mathbf{\Lambda}} X)\rightarrow
 C^{\diamond}_\ast(\widehat{\mathbf{\Omega}} X) \otimes _\tau C^{\diamond}_\ast(\widehat{\mathbf{\Lambda}} X)   \otimes C^{\diamond}_\ast(\widehat{\mathbf{\Lambda}} X) \]
 defined for
    \[\alpha  \in  C^{\diamond}_\ast(\widehat{\mathbf{\Lambda}} X)\otimes  C^{\diamond}_\ast(\widehat{\mathbf{\Lambda}} X)
 \otimes _\tau
 C^{\diamond}_\ast(\widehat{\mathbf{\Omega}} X)\ \  \text{and}\ \
   \beta \in   C^{\diamond}_\ast(\widehat{\mathbf{\Omega}} X)\otimes _\tau
 C^{\diamond}_\ast(\widehat{\mathbf{\Lambda}} X)\otimes C^{\diamond}_\ast(\widehat{\mathbf{\Lambda}} X)\]
  by

 \begin{equation}\label{tr}
   \mathcal{T}_r( \alpha  )= \left\{ \!\! \begin{array}{llll}
                                v]b\otimes u]a\otimes c ,
                                &\ \alpha= u]a\otimes v]b \otimes c \notin\operatorname{Im}(1\otimes \Theta), \vspace{2mm} \\

                             (d_1+d_2)(v]b)    \otimes  u]a\otimes c\,+\vspace{1mm}\\
                              v]b    \otimes  \Theta(u]a\otimes c) ,

                                    &\  \alpha= u]a\otimes v]b \otimes c\in
                                    \operatorname{Im}(1\otimes \Theta)
                                    \end{array} \right.
 \end{equation}
 and
 \begin{equation}\label{tl} \mathcal{T}_l( \beta  )=\left\{ \!\! \begin{array}{llll}
                                a\otimes v]c\otimes u]b ,
                                &\ \ \beta= a\otimes u]b\otimes v] c \notin\operatorname{Im}(\Theta\otimes 1), \vspace{2mm} \\

                              \Theta(a\otimes v]c)\otimes  u]b\,+
                              \vspace{1mm}\\
                              a\otimes v]c\otimes  (d_1+d_2)(u]b),

                                    &\ \  \beta=a\otimes u]b\otimes v]c\in
                                    \operatorname{Im}( \Theta\otimes 1).
                                    \end{array} \right.
 \end{equation}

\subsection{Proof of Theorem \ref{string}}

Denote $f:=  \Delta_{\mathbf\Lambda}\circ \Cap$  and $g:=  (\Cap\otimes \mu_r)\circ (1\otimes \mathcal{T}_r)  \circ (\Delta_{\mathbf\Lambda}\otimes \nu_r),$ so that
\[ f,g :   C^{\diamond}_\ast(\widehat{\mathbf{\Lambda}} X)   \otimes C^{\diamond}_\ast(\widehat{\mathbf{\Lambda}} X)\rightarrow  C^{\diamond}_\ast(\widehat{\mathbf{\Lambda}} X)   \otimes C^{\diamond}_\ast(\widehat{\mathbf{\Lambda}} X).\]
We have for $u]a\otimes v]b\in C^{\diamond}_\ast(\widehat{\mathbf{\Lambda}} X)   \otimes C^{\diamond}_\ast(\widehat{\mathbf{\Lambda}} X)$ that
$f(u]a\otimes v]b)=g(u]a\otimes v]b)$ for $|u|+|v|=n,$ while
for $|u|+|v|>n$ both   $f(u]a\otimes v]b)$   and 
$g(u]a\otimes v]b)$
lie in the same (acyclic) subcomplex
$u]a'b' \otimes u]a''b''\subset    C^{\diamond}_\ast(\widehat{\mathbf{\Lambda}} X)   \otimes C^{\diamond}_\ast(\widehat{\mathbf{\Lambda}} X), $ so that
by the standard acyclic argument we get a chain homotopy
\begin{equation}\label{cbistring1}
  \Delta_{\mathbf\Lambda}\circ \Cap\simeq
       (\Cap\otimes \mu_r)\circ (1\otimes \mathcal{T}_r)  \circ (\Delta_{\mathbf\Lambda}\otimes \nu_r).
  \end{equation}
  Entirely dually, we establish  a chain homotopy
\begin{equation}\label{cbistring2}
  \Delta_{\mathbf\Lambda}\circ \Cap^{op}\simeq
       (\mu_l\otimes\Cap^{op} )\circ (\mathcal{T}_l\otimes 1)  \circ (\nu_l\otimes \Delta_{\mathbf\Lambda}) .
  \end{equation}
Consequently,
(\ref{bistring1}) and (\ref{bistring2})  hold respectively.
\hspace{1.8in} $\Box$

\vspace{0.2in}

\section{Algebraic models for the free loop space and the
hat-coHochschild  construction.}

\subsection{Algebraic preliminaries}

We fix a ground commutative ring $\Bbbk$  with unit $1_{\Bbbk}$. All
modules are assumed to be  over $\Bbbk.$
We recall some algebraic constructions associated to differential
graded (dg) coassociative  coaugmented coalgebras. Recall that a dg
coalgebra $(C,d_C, \Delta)$ is \text{coaugmented} if it is equipped
with a map of dg coalgebras $\epsilon: \Bbbk \to C$. Denote
$\overline{C}= \text{coker} (\epsilon)$.
 Given a coaugmented dg coalgebra $(C, d_C, \Delta, \epsilon)$ which is
 free as a $\Bbbk$-module on each degree, the \textit{cobar
 construction} of $C$ is the differential graded (dg) associative
 algebra $(\Omega C,d_{\Omega C})$ defined as follows.
For any $ c \in \overline{C}$ write $\Delta(c)= \sum  c'
\otimes  c''$ for the induced coproduct on $\overline{C}$. The
underlying algebra of the cobar construction is the tensor algebra
\[\Omega C= Ts^{-1}\overline{C}= \Bbbk \oplus s^{-1}\overline{C} \oplus
\left(s^{-1} \overline{C}\, \right)^{\otimes 2} \oplus \left(s^{-1}
\overline{C}\, \right)^{\otimes 3} \cdots ;
 \] 
  Denoting  $[\bar c_1|...|\bar c_n]:= s^{-1}
 c_1 \otimes ... \otimes s^{-1} c_n \in \Omega C,$ 
  the differential
$d_{\Omega C}$ is defined by extending
\begin{equation}\label{cobarsign}
  d_{\Omega C}([\bar c]) =-\left[\, \overline{d_{C}  (c)} \,\right] +
  \sum (-1)^{|c'|} \left[\, \bar{ c'} \mid \bar {c''} \, \right]
\end{equation}
 as a derivation to all of $\Omega C.$ Thus, the cobar construction defines a functor
 from the category of coaugmented dg coalgebras to the category of
 augmented dg  algebras.

The \textit{coHochschild complex} of $C$ is the dg $\Bbbk$-module
$\Lambda C= (C \otimes \Omega C, d_{\Lambda C})$ with differential
$d_{\Lambda C}= d_C \otimes 1 + 1 \otimes d_{\Omega C} + \theta_1 +
\theta_2$ where

\begin{equation}\label{thetas}
\begin{array}{llll}
\theta _1 (v\otimes [ \bar c_1|\dotsb | \bar c_n]) = \sum \,
(-1)^{|v'|+1}\,v'\otimes
 [\,\bar{ v''}\,|\, \bar c_1|\!\dotsb\! |\bar c_n],
 \newline $\vspace{1mm}$
 \\
\theta _2 (v\otimes [ \bar c_1|\dotsb | \bar c_n]) = \sum\,
 (-1)^{(| v'|+1) (|{v''}|+\epsilon^c_n)}\, v''\otimes [ \bar
 c_1|\!\dotsb \!| \bar c_{n}|\, \bar{v'}\,],
\newline $\vspace{1mm}$\\
\hspace{2.8in}
\epsilon^x_n=|x_1|+\cdots +|x_n|+n.
\end{array}
\end{equation}

\subsection{The hat-coHochschild construction of a cubical chain
complex} Let $(X,x_0)$ be a pointed cubical set, and let
$(C_\ast(X),d_{C},\Delta_{_\Box})$ be  the cubical chain complex of
$X.$
In fact, the definition of the \textit{hat-cobar construction}  of the
cubical chain complex mimics the one of   the simplicial chain complex.
Consider the coaugmented  dg coalgebra $(C_\ast (Z(X)),
d_C,\Delta_{_\Box}, \epsilon)$, where $\epsilon$ is determined by the
choice of fixed point $x_0$.
Obtain a new coaugmented dg coalgebra
\[ A:=(  C_\ast (Z(X))  , d_A=0,\Delta'_{_\Box}, \epsilon)
 \ \ \text{with}\
\Delta'_{_\Box}\  \text{to be}\  \Delta_{_\Box}
 \text{
 without the primitive  terms}.\]

Let $(\Omega A,d_{\Omega A})$ be the cobar construction of $A,  $ and
define the submodule $\Omega'_n A\subset \Omega _nA$ for $n\geq 0$
 to be
 generated by monomials $[\bar a_1|\cdots| \bar a_k]\in \Omega_n
 A,\,k\geq 1,  $   where  $a_i\in Z(X)$ with
 $\min a_1=\max a_k= x_0 $ and
 $\max a_i=\min a_{i+1}$
for all $i.$
Then $\Omega'A$ inherits the structure of a dg algebra. In particular,
$\Omega' A= \Omega A$ when $X_0=\{x_0\}$.
 Define the \emph{hat-cobar construction}  $\widehat{\Omega}C_\ast(X)$
 of the dg coalgebra $C_\ast(X)$ as
\[  \widehat{\Omega}C_\ast(X)=\Omega' A /\sim ,  \]
where $\sim$ is generated by
\[
[\bar a_1|...|\bar a_{i-1}|\bar a_i|\bar a_{i+1}|\bar a_{i+2}|...|\bar
a_k ]\sim [\bar a_1|...|\bar a_{i-1}|\bar a_{i+2}|...|\bar a_k]
\ \  \text{whenever}\ \  a_{i+1}=a_i^{op};
\]
in particular, $[\,\bar a_i|\bar a_{i+1}]\sim 1_{\Bbbk}.$

The \textit{hat-coHochschild complex}  $(\widehat{\Lambda} C_\ast(X),d_{\widehat{\Lambda}C})$   of  $C_\ast(X)$ is defined as
\[\widehat{\Lambda} C_\ast(X)=C_\ast(X)\otimes \widehat{\Omega}
C_\ast(X)\]
 with  differential
        $d_{\widehat{\Lambda}C}= d_C\otimes 1 + 1\otimes d_{
        \widehat{\Omega} C}+ \theta _1+\theta _2,$
 where $\theta_1$ and $\theta_2$ are defined as in (\ref{thetas}).
  The homology of  $\widehat{\Lambda} C_\ast(X) $ is called the
  \textit{hat-coHochschild homology}
 of
$ C_\ast(X)$ and is denoted by $\widehat{HH}_*( C_\ast(X)).$

 We have a straightforward
\begin{theorem}\label{hat-coHoch} For a cubical set $(X,x_0)$
the permutahedral chain complex  $C^\diamond_\ast(\widehat{\mathbf{\Omega}} X)$
coincides with the hat-cobar construction $\widehat{\Omega}C_\ast(X),$
and
the permutahedral chain complex  $C^{\diamond}_\ast(\widehat{\mathbf{\Lambda}} X)$
coincides with the hat-coHochschild complex
$\widehat{\Lambda}C_\ast(X)$
of the cubical chain complex $C_\ast(X).$
\end{theorem}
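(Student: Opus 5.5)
The plan is to compare generators and differentials directly, first for $\widehat{\mathbf{\Omega}}X$ and then for $\widehat{\mathbf{\Lambda}}X$.

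\emph{Generators for $\widehat{\mathbf{\Omega}}X$.} By the explicit construction, a nondegenerate element of $\widehat{\mathbf{\Omega}}_{r,k}X$ is a word $\bar\sigma_1\cdots\bar\sigma_k$ with $\sigma_i\in Z(X)_{n_i+1}$, consecutive vertices matching, and first and last vertex equal to $x_0$. Modulo the relations, these words are the monomials $[\bar\sigma_1|\cdots|\bar\sigma_k]$ of $\Omega'A$ modulo $\sim$.
\begin{itemize}
\item The cancellation relation $\bar\sigma_i\bar\sigma_{i+1}\sim$ (nothing), for $\sigma_{i+1}=\sigma_i^{op}$, is exactly the defining relation of $\widehat{\Omega}$.
\item The degeneracy identifications $\overline{\eta_{n_i+1}\sigma_i}\,\bar\sigma_{i+1}\sim\bar\sigma_i\,\overline{\eta_1\sigma_{i+1}}$ only identify degenerate generators.
\item Degenerate generators coming from the unit are killed in $C'_\ast/C'_\ast(D(1))$, matching the normalization in the cobar construction.
\end{itemize}
The degree $|\bar\sigma|=|\sigma|-1$ agrees with the desuspension in $\Omega A$. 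So $C^\diamond_\ast(\widehat{\mathbf{\Omega}}X)\cong\widehat{\Omega}C_\ast(X)$ as graded algebras, the product on $\widehat{\mathbf{\Omega}}X$ being concatenation.

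\emph{Differential on $\widehat{\mathbf{\Omega}}X$.} On a single generator $\bar\sigma$ with $|\sigma|=m$, the permutahedral differential is
\[
d(\bar\sigma)=\sum_{A|B\in\overline{P}(m)}(-1)^{\#A}sgn(A,B)\,\overline{d^0_B\sigma}\cdot\overline{d^1_A\sigma}.
\]
I will split this sum into two parts.
\begin{itemize}
\item The terms with $A|B\in P(m)$ are exactly the reduced diagonal $\Delta'_{_\Box}$ of (\ref{dcube}), with factors $\sigma_B\otimes\sigma_A$. By the Remark after (\ref{dcube}), $\dim\sigma_B=\#A$, so the sign $(-1)^{\#A}$ is the cobar sign $(-1)^{|c'|}$ of (\ref{cobarsign}).
\item The two extreme terms $\varnothing|\underline m$ and $\underline m|\varnothing$ yield $\overline{\min\sigma}\cdot\bar\sigma$ and $\bar\sigma\cdot\overline{\max\sigma}$. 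These are units or degenerate, hence vanish or contribute the linear part.
\end{itemize}
Here I must check carefully that the cubical faces $d^\epsilon_i$, which are not separately listed as permutahedral faces, are accounted for. Since $d_A=0$ in $A$, the cobar differential consists only of the quadratic terms. So I will verify that the permutahedral chain differential on $P_m$, given by (\ref{Psign}), is exactly this quadratic part. Both sides are derivations — the permutahedral differential by its definition on products $P_{n_1}\times\cdots\times P_{n_k}$ — so agreement on generators gives the first claim.

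\emph{The $\widehat{\mathbf{\Lambda}}X$ case.} Generators $(u,y)$ with $\min u=x_2$ and $\max u=x_1$ correspond to $u\otimes y\in C_\ast(X)\otimes\widehat{\Omega}C_\ast(X)$. The relations $(\eta_{n_0+1}u,y)\sim(u,\varrho_1y)$ again involve only degenerates. Using the three pieces $d=d_1+d_2+d_3$ of (\ref{Pset}):
\begin{itemize}
\item $d_3$ is $1\otimes d_{\widehat\Omega}$, up to the sign $(-1)^{n_0+r_{i-1}}$, which is the Koszul sign of passing $u$ and $\bar\sigma_1,\dots,\bar\sigma_{i-1}$.
\item $d_1$, with $A|B\in\overline P'(n_0)$, gives $u_B\otimes[\bar u_A|\bar\sigma_1|\cdots]$. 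This is $\theta_1$, since $v'=u_B$ and $v''=u_A$. The extreme term $\varnothing|\underline{n_0}$ (empty $A$) contributes the cubical $d^0$-part of $d_C\otimes1$ after normalization; I must reconcile this with the fact that $d_C$ appears separately in $d_{\widehat\Lambda C}$.
\item $d_2$, with $C|D\in\overline P''(n_0)$, gives $u_C^{1}\otimes[\cdots|\bar\sigma_k|\overline{d^0_Du}]$. This is $\theta_2$ with $v'=d^0_Du$ and $v''=d^1_Cu$, and the sign $(-1)^{(\#C+1)(\#D+r)}$ is chosen to match $(-1)^{(|v'|+1)(|v''|+\epsilon_n)}$ — this sign agreement is noted in Remark \ref{permufaces}.
\end{itemize}

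\emph{Expected main obstacle.} The hardest step is the sign and normalization bookkeeping in the last bullet. I need to confirm that the permutahedral signs $sgn(A,B)$ combined with $(-1)^{\#A}$ reproduce exactly the Koszul signs of $\theta_1$ and $\theta_2$. I also need to confirm that the primitive terms of $\Delta_{_\Box}$, which were removed in $A$, reappear as $d_C\otimes 1$ on $\widehat\Lambda$, while on $\widehat\Omega$ they vanish because $d_A=0$. I would first check this by hand on $n_0=1,2$ against Figure 6, and then argue for general $n_0$ by induction on $n_0$ using the subdivision $P_{n+1}=\bigcup a\times I/\alpha_\ast$.
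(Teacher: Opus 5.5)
Your route is the one the paper intends: match generators with monomials, then match $d_3$, $d_1$, $d_2$ of (\ref{Pset}) with $1\otimes d_{\widehat{\Omega}}$, $\theta_1$, $\theta_2$. (The paper calls the theorem straightforward and records only that $d_i$ corresponds to $\theta_i$ for $i=1,2$.) However, the step you flag as the main obstacle is resolved incorrectly, and the reconciliation you propose would fail.

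The extreme face in $d_1$ is $d_{\varnothing|\underline{n_0}}(u]y)=(d^0_{\underline{n_0}}u,\overline{d^1_{\varnothing}u}\,y)=\min u\,]\,\bar u\,y$. This is not a cubical face of $u$. It is the $\theta_1$-summand coming from the primitive term $\min u\otimes u$ of $\Delta_{_\Box}$, which is why $d_1$ is indexed by $\overline{P}'(n_0)$ rather than $P(n_0)$. Likewise $d^{op}_{\underline{n_0}|\varnothing}(u]y)=\max u\,]\,y\,\bar u$ is the $\theta_2$-summand of $u\otimes\max u$. So the primitive terms of $\Delta_{_\Box}$ reappear inside $\theta_1$ and $\theta_2$, not as $d_C\otimes 1$.

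The cubical faces of $u$ are never separate facets. The face $d^0_iu$ occurs only in $d_{A|\{i\}}(u]y)=d^0_iu\,]\,\bar e\,y$, where $A=\underline{n_0}\smallsetminus\{i\}$ and $e=d^1_Au$ is an edge, i.e.\ a degree-$0$ letter of $\widehat{\Omega}$. Similarly $d^1_iu$ occurs only in $d^{op}_{\{i\}|D}(u]y)=d^1_iu\,]\,y\,\bar e'$ with $e'=d^0_Du$. The factor $P_{n_0+1}$ has $2^{n_0+1}-2=\#\overline{P}'(n_0)+\#\overline{P}''(n_0)$ facets, and all of them are used by $d_1$ and $d_2$. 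So no facet of the form $d^\epsilon_iu\,]\,y$ exists. For example, for $u]$ with $u\in X_2$, the six sides of the hexagon $P_3$ are $\min u\,]\,\bar u$, $\max u\,]\,\bar u$, and four sides $d^\epsilon_iu\,]\,\bar e$ with $e$ an edge of $u$.

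The same holds in $\widehat{\mathbf{\Omega}}X$:
\begin{itemize}
\item the extreme partitions are not facets of $P_m$ (they would produce a desuspended vertex), so they simply do not occur;
\item there is no linear part;
\item $d^\epsilon_i\sigma$ appears only in the terms with $\#A=1$ or $\#B=1$, next to an edge letter.
\end{itemize}
This is exactly why $A$ carries $d_A=0$.

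Consequently the identification can only hold with the summand $d_C\otimes 1$ of $d_{\widehat{\Lambda}C}$ read as $d_A\otimes 1=0$. The cubical differential of $u$ is then carried by the edge terms of $\theta_1,\theta_2$, computed with the full $\Delta_{_\Box}$. Only for $1$-reduced $X$ with normalized chains (Theorem \ref{coHoch}) do the edge letters become the unit $\overline{\eta_1(x_0)}=1$. There these facets become exactly $d_C\otimes 1$, while the $\theta_i$-terms containing a degenerate edge vanish. Your argument needs this observation in place of the one you propose.

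Also, read literally, (\ref{Pset}) carries $(-1)^{\#A}$ where (\ref{thetas}) gives $(-1)^{|v'|+1}=(-1)^{\#A+1}$. So your planned sign check will have to fix a convention rather than confirm an exact match.
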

In particular, the component $d_i$ of the differential $d$ in $C^{\diamond}_\ast(\widehat{\mathbf{\Lambda}} X)$ is identified with the component $\theta_i$ of $d_{\widehat{\Lambda} C}$
in $\widehat{\Lambda}C_\ast(X)$ for $i=1,2.$
Furthermore,
for a $1$-reduced $X$ (e.g., $X=\operatorname{Sing}_I^1(Y,y)$ the
cubical singular set consisting of all singular cubes in a topological space $Y$
which collapse edges to a fixed point $y \in Y$), the hat-cobar
construction $\widehat{\Omega} C_\ast(X)$ coincides with the Adams'
cobar construction  $\Omega C_\ast(X)$ of  the dg coalgebra $C_\ast(X),$
and, consequently, the hat-coHochschild construction $\widetilde{\Lambda}
C_\ast(X)$ coincides with the standard coHochschild construction
$\Lambda C_\ast(X).$
Thus  we obtain
\begin{theorem}\label{coHoch}
 For a 1-reduced cubical set $X$
the permutahedral chain complex  $C^\diamond_\ast({\mathbf{\Omega}} X)$    coincides
with the cobar construction ${\Omega}C_\ast(X),$
and
the permutahedral chain complex  $C^{\diamond}_\ast({\mathbf{\Lambda}} X)$    coincides
with the coHochschild complex
${\Lambda}C_\ast(X)$
of the cubical chain complex $C_\ast(X).$
\end{theorem}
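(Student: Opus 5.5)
Theorem \ref{coHoch} is obtained by specializing Theorem \ref{hat-coHoch}. So the plan is to show that for a 1-reduced $X$ the two "hat" modifications disappear: $\widehat{\mathbf{\Omega}}X=\mathbf{\Omega}X$ and $\widehat{\mathbf{\Lambda}}X=\mathbf{\Lambda}X$ as permutahedral sets, and $\widehat{\Omega}C_\ast(X)=\Omega C_\ast(X)$ as dg algebras. The argument is a chain of identifications, and each one should be checked against the explicit descriptions of Section 5 and Section 6.

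\emph{Removing the formal inverses.} For 1-reduced $X$ we have $X_0=\{x_0\}$, and every $1$-cube is degenerate, $X_1=\{\eta_1(x_0)\}$. Hence the set of non-degenerate $1$-cubes is empty, so $X_1^{op}=\varnothing$ and $Z(X)=X$.

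\emph{The geometric side.} Since $Z(X)=X$, the relation defining $\widehat{\mathbf{\Lambda}}X=\mathbf{\Lambda}'(Z(X))/\sim$ is vacuous: it needs adjacent beads $f(I^{n_p})=(f(I^{n_{p+1}}))^{op}$, which cannot occur. The condition $f(I^{n_0})\subset X$ is automatic. So $\widehat{\mathbf{\Lambda}}X=\mathbf{\Lambda}X$, and likewise $\widehat{\mathbf{\Omega}}X=\mathbf{\Omega}X$. In the explicit model the relations involving $\sigma^{op}_i$ disappear, and only the degeneracy relations $\overline{\eta_{n_i+1}(\sigma_i)}\,\bar\sigma_{i+1}\sim\bar\sigma_i\,\overline{\eta_1(\sigma_{i+1})}$ remain. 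These are killed in the normalized chains $C^\diamond_\ast$ anyway.

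\emph{The algebraic side.} On the cobar side, $\Omega' A=\Omega A$ because $X_0=\{x_0\}$, as the paper already notes. The relation $\sim$ again involves only $a^{op}$, so it is empty and $\widehat{\Omega}C_\ast(X)=\Omega A$. The remaining point is to compare $\Omega A$ with the Adams cobar construction $\Omega C_\ast(X)$ of the coaugmented coalgebra $(C_\ast(X),d_C,\Delta_{_\Box})$ with $\epsilon$ given by $x_0$. In the normalized pointed chain complex $C_\ast(X)=C'(X)/C'_{>0}(D(x_0))$ the degenerate $1$-cube is zero, so $\overline{C}=C_{\ge 2}(X)$. For $c\in\overline C$, the reduced diagonal of $c$ is $\Delta_{_\Box}$ minus the primitive terms $\min c\otimes c+c\otimes\max c$, which is exactly $\Delta'_{_\Box}$. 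The differential $d_C$ is then restored in $d_{\Omega A}$ through the term $-[\overline{d_C c}]$ of (\ref{cobarsign}).

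\emph{Conclusion and main obstacle.} Since the generators, products and differentials agree, $\widehat{\Omega}C_\ast(X)=\Omega C_\ast(X)$. Tensoring with $C_\ast(X)$ and using the same $\theta_1,\theta_2$ from (\ref{thetas}) gives $\widehat{\Lambda}C_\ast(X)=\Lambda C_\ast(X)$. Theorem \ref{hat-coHoch} then yields both stated coincidences. I expect the main obstacle to be the bookkeeping step reconciling $d_A=0$ with $d_C$. One must check that the definition of $d_{\widehat\Omega C}$ on $\Omega' A$ really includes the linear part $-[\overline{d_Cc}]$, which the hat construction builds in implicitly. One must also check that the normalization by $D(x_0)$ on both sides matches the quotient $C'_\ast(D(1))$ used for $C^\diamond_\ast$. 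I would verify this directly by comparing the face operators $d_{A|B}$, with the signs of (\ref{Pset}), against (\ref{cobarsign}) and (\ref{thetas}).
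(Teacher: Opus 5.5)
Your route is the paper's own. There, Theorem \ref{coHoch} is obtained by specializing Theorem \ref{hat-coHoch}: for 1-reduced $X$ one has $Z(X)=X$, the $op$-relations are vacuous, $\Omega'A=\Omega A$, and the hat-cobar construction is Adams' cobar construction. Your first three reductions are fine. The last identification is the only step with content, and your justification of it is wrong.

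You say $d_C$ is restored in $d_{\Omega A}$ through the term $-[\overline{d_Cc}]$ of (\ref{cobarsign}). But $A$ is defined with $d_A=0$, so that linear term is $-[\overline{d_Ac}]=0$. If in addition the degenerate edge $\eta_1(x_0)$ is $0$ on the $A$ side, as your normalization $\overline{C}=C_{\ge 2}(X)$ suggests, then $d_{\Omega A}$ is purely quadratic. The claimed equality of differentials is then false. For a $3$-cube $u$, no component $u_B\otimes u_A$ of $\Delta'_{_\Box}(u)$ has both factors of dimension $\ge 2$, so $d_{\Omega A}[\bar u]=0$. Adams' differential instead gives $-[\overline{d_Cu}]$, which need not vanish. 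So ``generators, products and differentials agree'' is exactly what remains unproved, and the bookkeeping you defer to the end is not a formality.

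The missing idea is that on the $A$/necklace side the degenerate edge is not zero but the unit. A bead mapped to a point is deleted by (\ref{rule2}), so $\overline{\eta_1(x_0)}=1\in\widehat{\mathbf{\Omega}}_0X$; in the language of Section 7 this is $[\bar 1_{G_1}]=1_{\Bbbk}$.

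Take $u\in X_m$ with $m\ge 3$. The components of $\Delta'_{_\Box}(u)$ with $A=\{i\}$ or $B=\{j\}$ are the facets of $P_m$ carrying a one-dimensional bead. They therefore reduce to $[\overline{d^1_iu}]$ and $[\overline{d^0_ju}]$. With the coefficients $(-1)^{\#A}sgn(A,B)$, using the shuffle signs $sgn(\{i\},\cdot)=(-1)^{i-1}$ and $sgn(\cdot,\{j\})=(-1)^{m-j}$, they sum to $-[\overline{d_Cu}]$. The components with $\#A,\#B\ge 2$ are precisely Adams' quadratic term over the reduced diagonal of the normalized $C_\ast(X)$. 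For $m=2$ both sides vanish.

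This also settles your normalization worry. The two normalizations do not match termwise: on Adams' side $C_1(X)=0$, while on the permutahedral side $\overline{\eta_1(x_0)}=1$ and only the positive-degree degeneracies in $D(1)$ are killed. The discrepancy is exactly the linear term. Once this step is supplied, your passage to $\Lambda C_\ast(X)$ via the same $\theta_1,\theta_2$ goes through.
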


It follows directly from Theorems \ref{freeloopmodel} and
\ref{hat-coHoch}  that for a path connected cubical set $X$ we have an
isomorphism  $\widehat{HH}_*( C_\ast(X)) \cong H_*(\Lambda Y)$ for
$Y=|X|$. Moreover,  from the homotopy invariance of the free loop space
we have the following direct
\begin{corollary}
If $f_\ast:C_\ast(X)\rightarrow C_\ast(X')$ is induced by a weak
equivalence $f: X\rightarrow X',$
then $\widehat{\Lambda}f_\ast:\widehat{\Lambda}C_\ast(X) \rightarrow
\widehat{\Lambda}C_\ast(X') $ is a quasi-isomorphism.
\end{corollary}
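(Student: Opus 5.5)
The plan is to use the preceding chain of identifications. For a weak equivalence $f:X\rightarrow X'$ of path connected pointed cubical sets, $f$ induces a map of permutahedral sets $\widehat{\mathbf{\Lambda}}f:\widehat{\mathbf{\Lambda}}X\rightarrow \widehat{\mathbf{\Lambda}}X'$ given by $u]\bar\sigma_1\cdots\bar\sigma_k\mapsto f(u)]\overline{f(\sigma_1)}\cdots\overline{f(\sigma_k)}$, where we extend $f$ to $Z(X)\rightarrow Z(X')$ by $f(x^{op})=f(x)^{op}$. This map is well defined: it respects the relations defining $\widehat{\mathbf{\Lambda}}$, which involve only face operators, degeneracies and the cancellation $\bar\sigma\,\overline{\sigma^{op}}\sim 1$, all of which commute with $f$. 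One small point needs care. If $f$ sends a non-degenerate $1$-cube to a degenerate one, we map $x^{op}$ to the same degenerate cube, and the relation $\bar\sigma_i\bar\sigma_{i+1}\sim\overline{\eta_1(\min\sigma_i)}$ keeps this consistent.

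The map $\widehat{\mathbf{\Lambda}}f$ commutes with the face operators $d_{A|B}$, $d^{op}_{C|D}$ and $d_{A_i|B_i}$. Under the identification of Theorem \ref{hat-coHoch}, its induced chain map is exactly $\widehat{\Lambda}f_\ast=f_\ast\otimes\widehat{\Omega}f_\ast$, because both are defined generator by generator through $f$. It follows that the square formed by $\widehat{\Lambda}f_\ast$ and the homology isomorphisms $\widehat{HH}_\ast(C_\ast(X))\cong H_\ast(\Lambda|X|)$ commutes with $(\Lambda|f|)_\ast$. To check this, we verify that the maps $\Upsilon$ of Theorem \ref{freeloopmodel} are natural, i.e. $\Lambda|f|\circ\Upsilon_X=\Upsilon_{X'}\circ|\widehat{\mathbf{\Lambda}}f|$. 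This holds on the nose, since $\Upsilon$ is built cell by cell from the fixed universal maps $\varsigma_n$ and $\omega_n$ composed with the characteristic maps of cubes, and $f$ acts only on those characteristic maps. The one ingredient that is not universal is the chosen edge paths $\lambda_{a,b}$ used in the products. We handle this either by choosing the paths in $X'$ as images of those in $X$ where possible, or by noting that a different choice changes the map only up to homotopy.

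Finally, $|f|$ is a homotopy equivalence (a weak equivalence of CW complexes), so $\Lambda|f|:\Lambda|X|\rightarrow\Lambda|X'|$ is a homotopy equivalence; this is the homotopy invariance of the free loop space functor. Hence $(\Lambda|f|)_\ast$ is an isomorphism. Since $\Upsilon_X$ and $\Upsilon_{X'}$ are homotopy equivalences, $|\widehat{\mathbf{\Lambda}}f|$ induces an isomorphism on homology. Identifying cellular (permutahedral) chains with homology of the realization via Theorem \ref{hat-coHoch}, $\widehat{\Lambda}f_\ast$ is a quasi-isomorphism.

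The main obstacle is the naturality step. We have to confirm that the identification $C^\diamond_\ast(\widehat{\mathbf{\Lambda}}X)\cong\widehat{\Lambda}C_\ast(X)$, together with the homotopy equivalence $\Upsilon$, is functorial in $X$. In particular, the formal inverses $X_1^{op}$ and any non-canonical choices must be carried along compatibly. If strict naturality fails because of such choices, we will fall back on showing naturality up to homotopy, which suffices for the homology statement.
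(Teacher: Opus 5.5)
Your proposal is correct and follows the paper's argument. The paper likewise combines Theorem \ref{freeloopmodel} and Theorem \ref{hat-coHoch} to obtain $\widehat{HH}_\ast(C_\ast(X))\cong H_\ast(\Lambda|X|)$ and then invokes homotopy invariance of the free loop space; the naturality in $X$ of $\Upsilon$ and of the chain-level identification, which you single out as the main step, is simply left implicit there.
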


\medskip

\section{Loop bialgebra}

\subsection{Hat-cobar construction of a dg coalgebra}
Recall the definition of the hat-cobar construction of a dg coalgebra
from \cite{RS1}.
Let  $(C,d_C,\Delta)$ be a dg coalgebra such that the module of cycles
$Z_1(C)\subset C_1$ is free  with basis $\mathcal{Z}_1.$
Let $G_1$ be the free group generated by $\mathcal{Z}_1,$ and let
$\Bbbk[G_1]$ be the group ring.
Define a graded module $C[1]$ as
  $C[1]_0=C_0$, $C[1]_1=\Bbbk[G_1]$ and $C[1]_i=C_i$ for $i\geq 2.$
  Then $C\subset C[1]$ extends  to the dg coalgebra
 $(C[1], d, \Delta)$ (with $d(C[1]_1)=0$).

Define the hat-cobar construction  $(\widehat{\Omega} C,d_{\widehat\Omega})$ of $C$  as
the standard cobar construction
${\Omega} C[1]$ of $C[1]$ modulo the relations $[\,\bar 1_{G_1}]=
1_{\Bbbk}$ and
\begin{multline*}
[\bar a_1|...|\bar a_{i-1}|\bar a_i|\bar a_{i+1}|\bar a_{i+2}|...|\bar
a_k ]= [\bar a_1|...|\bar a_{i-1}|\overline {a_i a_{i+1}}\,|\bar
a_{i+2}|...|\bar a_k]
\ \  \text{whenever}\\  a_{i},a_{i+1}\in G_1.
\end{multline*}

\begin{remark}
  Regarding simplicial and cubical chain complexes $C_\ast(X)$  as dg
  coalgebras $C,$ their hat-cobar constructions are different unless
  $C_1(X)=0.$
\end{remark}

The hat-coHochschild complex $\widehat{\Lambda}C$ of a dg coalgebra $C$
is the tensor product
$C\otimes  \widehat{\Omega}C$ with the differential defined with the
same formula as in the coHochschild complex. An element $w\otimes
\varpi\in \widehat{\Lambda}C $ is denoted by $w]\varpi.$

\subsection{Hat-Hirsch  coalgebra}

A dg coalgebra $(C,d,\Delta)$ is \textbf{hat-Hirsch coalgebra}
  if there are  cooperations
$E^{p,q}: C\rightarrow C^{\otimes p}\otimes C^{\otimes q},\,p,q\geq
0,\,p+q\geq 1,$
of degree $p+q-1$  such that
\begin{itemize}
  \item  $E^{1,0}=E^{0,1}=Id: C\rightarrow C;$
  \item   $E^{p,q}(C_0)=0$ for all $p,q;$

  \item  $E^{p,q}$ extends to a linear map $E^{p,q}: C[1]\rightarrow
      C[1]^{\otimes p}\otimes C[1]^{\otimes q};$
     \item $E^{p,q}$ extends multiplicatively  to   the chain map
         $\Delta_E:  \widehat{\Omega}C\rightarrow
         \widehat{\Omega}C\otimes  \widehat{\Omega} C .  $
       \end{itemize}
 In particular, $(\widehat{\Omega} C,d_{\widehat{\Omega}C}\,, \cdot\,,
 \Delta_E )$ is a dg bialgebra. A hat-Hircsh coalgebra   $C$ is \textbf{trivial}
 if $E^{p,q}=0$ unless $(p,q) =(0,1)$  and $(p,q)=(1,0).$

 A motivated example is
 $C_\ast=C_\ast(X)$ as in Theorem \ref{hat-coHoch} (cf.
 \cite{KStwisted}) where the cooperatons  $E^{p,q}(u)$  for  $u\in X_n $ are defined by the diagonal components of $\Delta_P(\bar u)$
 in
 $\widehat{\mathbf{\Omega}}_{*,p}X \times \widehat{\mathbf{\Omega}}_{*,q}X.$
 When $E^{p,q}=0$ either for $p>1$ or $q>1,$ one obtains a (co)Gerstenhaber structure on $C$ a main example of which is $C=C_\ast(Y),$ the simplicial chain complex  of a simplicial set $Y,$ for which  the "geometric" diagonal on
 (co)Hochschild complex of $C_\ast(Y;\Bbbk)$  with any coefficients $\Bbbk$
 (i.e., inducing the standard coproduct on the free loop  homology $H_\ast(\Lambda|Y|;\Bbbk)$) is constructed  using an explicit diagonal of freehedra  in \cite{saneFREE}.

Similarly here we first construct the coproduct on the (hat)-Hochschild chain complex $\widehat{\Lambda}C_\ast(X)$ this times using an explicit diagonal of permutahedra   given by  (\ref{Lambdadiag}). Then
 the analysis  of the diagonal  on
$\widehat{\Lambda}C_\ast(X)$ (cf. Example \ref{coproduct})  leads to  formula (\ref{loopco}) of the coproduct on the coHochschild complex $\widehat{\Lambda}C$  for any  hat-Hirsch coalgebra $C.$
Namely, given
$u] \varpi \in \widehat{\Lambda}C,$
let
\[
\begin{array}{rll}
\Delta(u)&= &\sum u'\otimes u'',\vspace{1mm}\\

E^{p,q}(u')&=&\sum a_1\otimes\cdots \otimes a_p
\otimes b_1\otimes\cdots \otimes b_q \in C^{\otimes p}\otimes
C^{\otimes q},\vspace{1mm}\\
E^{s,t}(u'')&= &\sum c_1\otimes\cdots \otimes c_s\otimes
d_1\otimes\cdots \otimes d_t\in C^{\otimes s}\otimes C^{\otimes t},\vspace{1mm} \\
\Delta_E(\varpi )&=&\sum \varpi'\otimes \varpi'',\\
\end{array}
\]
and then define the coproduct
\[  \Delta_\Lambda:
\widehat{\Lambda}C\rightarrow  \widehat{\Lambda}C\otimes
\widehat{\Lambda} C \]
 by
\begin{multline}\label{loopco}
\Delta_{\Lambda}(u]\varpi)  \!=\!\!
\sum a_p]\,\bar c_1\cdots \bar c_s\cdot {\varpi'}\cdot
 \bar a_1\cdots \bar a_{p-1}\, \otimes\,
d_1]\,\bar d_2\cdots \bar d_t\cdot {\varpi''}\cdot
 \bar b_1\cdots \bar b_{q}.
 \end{multline}

 In other words, given a hat-Hirsch  coalgebra $(C,d_C,\Delta,
\{E^{p,q}\}),$ the coproducts
$\Delta:C\rightarrow C\otimes C$     and
$\Delta_E: \widehat{\Omega}C\rightarrow  \widehat{\Omega}C\otimes
\widehat{\Omega} C $    canonically determine
the (twisted) coproduct      $\Delta_\Lambda:
\widehat{\Lambda}C\rightarrow  \widehat{\Lambda}C\otimes
\widehat{\Lambda} C $ above.

In the following example we show how the structural cooperations $E^{p,q}$ on $C^\ast(X)$  are incorporated 
in the permutahedral coproduct $\Delta_{\Lambda}$  given by (\ref{loopco}).

\begin{example}\label{coproduct}

Let $C_\ast=C_\ast(X)$ as in Theorem \ref{hat-coHoch}.
  Let  $u\in C_7$  and  $\varpi\in \widehat{\Omega}_rC,\,r\geq 0,$ and 
  $u] \varpi\in
  \widehat{\Lambda}_{7+r}C$ be an elementary chain.

(i) Let $\sigma=3|2|1|5|6|4|8|7$  be a vertex of $P_8,$ and let
\[ A_\sigma\otimes B_\sigma
=123|5|46|78\otimes 3|2|156|48|7\]
 be the corresponding SCP in $\Delta_P(P_8).$  Taking into account  bijection (\ref{fbi})
  remove the integer $1$ and shift down by $1$ the blocks of the partitions to  obtain a pair
 \[12|4|35|67\otimes 2|1|45|37|6\]
 that  is  identified with  a component of
 \[\Delta_{P}(12|34567)=\Delta_P(12)\otimes \Delta_P (34567),\]
 where $12|34567\subset P_7 $ corresponds to the component $u'\otimes u''= d^0_{(34567)}(u)\otimes d^1_{(12)}(u) $ of $\Delta(u).$
Then  $A_\sigma\otimes B_\sigma $  corresponds to   a component of $\Delta_\Lambda$
\[ a_1]\,c_1c_2c_3\,\varpi'  \otimes\, d_1]\,d_2d_3\, \varpi''b_1b_2
      \]
      in which
\begin{itemize}

\item $a_1\otimes ( b_1\otimes b_2)$ is determined by the component
$12\otimes 2\,|1\in \Delta_P(12);$

\item $(c_1\otimes c_2\otimes c_3)\,\otimes\, (d_1\otimes d_2\otimes d_ 3)$ is determined by the component
$ 4|35|67\otimes  45|37|6 \in \Delta_P(34567),$
 and

\item  $\varpi'\otimes \varpi''\in \Delta_P(\varpi).$
\end{itemize}

(ii)  Let $\sigma=4|3|5|1|2|7|6|8$  be a vertex of $P_8,$ and let
\[A_\sigma\otimes B_\sigma
=34|15|2|67|8\otimes 4|35|127|68\]
 be the corresponding SCP in $\Delta_P(P_8).$ As above  taking into account   bijection (\ref{fbi})   remove the integer $1$ and  shift down by $1$ the blocks of the partitions to  obtain a pair
 \[23|4|1|56|7\otimes 3|24|16|57\]
 that
 is  identified with a component of
 \[\Delta_{P}(234|1567)=\Delta_P(234)\otimes \Delta_P (1567),\]
 where $234|1567\subset P_7 $ corresponds to the component $u'\otimes u''= d^0_{(1567)}(u)\otimes d^1_{(234)}(u) $ of $\Delta(u).$
Then  $A_\sigma\otimes B_\sigma $  corresponds to   a component of
 $\Delta_\Lambda$
\[ a_2]\,c_1c_2c_3\,\varpi'a_1  \otimes d_1]\,d_2\, \varpi''b_1b_2
      \]
in which
\begin{itemize}

\item $(a_1\otimes a_2)\otimes (b_1\otimes b_2)$ is determined by the component
$23|4\otimes 3|24\in \Delta_P(234);$

\item $(c_1\otimes c_2\otimes c_3)\,\otimes\, (d_1\otimes d_2)$ is determined by the component
$ 1|56|7\otimes  16| 57 \in \Delta_P(1567),$
and

\item  $\varpi'\otimes \varpi''\in \Delta_P(\varpi).$
\end{itemize}

\end{example}

\subsection{Intersection  bialgebra}

Let $(C,d_C,\Delta)$ be
a dg $n$ -- dimensional coalgebra (i.e.,  $C_i=0$  for $i>n$) endowed 
with the product  $m:C\otimes C\rightarrow C$ of degree $-n,$ too.
Consider a pairing  in the hat-coHochschild complex   $\widehat{\Lambda} C$
of degree $-n$ 
\[ \Cap:   \widehat{\Lambda} C \otimes \widehat{\Lambda} C \rightarrow
\widehat{\Lambda} C \]
  defined for
 $(\alpha,\beta)\in  \widehat{\Lambda} C \otimes \widehat{\Lambda} C$
by  (cf. (\ref{capformula}))
\begin{equation}\label{Capformula}
  \alpha\Cap \beta=\left\{
  \begin{array}{llll}
 (-1)^{|a||v|}\, m(u, v)\,]\, ab, &     \alpha=u]a\notin   \operatorname{Im}(\theta_2),\\
   &                      \beta=v]b \notin   \operatorname{Im}(\theta_1),
     \vspace{1mm}\\
    \theta_1(u]a \Cap \beta)-  \theta_1(u]a) \Cap \beta,& \alpha=\theta_2(u]a) ,\vspace{1mm}\\
  \theta_2(\alpha\Cap v]b)-  \alpha \Cap \theta_2(v]b),&
   \beta= \theta_1(v]b),\vspace{1mm}\\
   0, & \text{otherwise}.
\end{array}
 \right.
\end{equation}
Then $\Cap$ satisfies the Leibnitz rule.
In the case  $C=C_\ast(X)$ we have that
\[\theta_1(u]a \Cap \beta)-  \theta_1(u]a) \Cap \beta=0\ \ \text{and}\ \
  \theta_2(u]\,a)\Cap \beta=0 \ \  \text{for any}\ \  u]a \in  \widehat{\Lambda} C .\]
Note that if for $C$ there are equalities (compare Proposition \ref{serrediagonal})
\begin{equation}\label{comodule}
  \Delta\circ m= (m\otimes 1)(1\otimes T)(\Delta\otimes 1)
= (1\otimes m)(T\otimes 1)(1\otimes \Delta)
\end{equation}
and \begin{equation}\label{zero}
  \theta_2(\alpha)\Cap \beta +  \alpha\Cap \theta_1(\beta)=0,
\end{equation}
then the $\Cap$ -- product   defined by the formula 
\begin{equation}\label{strictCap}
 u]a\Cap v]b  =   (-1)^{|a||v|}\, m(u,v)\,]\,ab
 \end{equation}
   is chain.
Let  $(C,d_C,\Delta,m)$ be a dg $n$ -- dimensional module with  the coproduct $\Delta$  and  the product $m$ of degree $-n.$

(i) $C$ is \textbf{an  intersection bialgebra} if $\widehat{\Lambda} C$
admits the $\Cap$ -- product defined by (\ref{Capformula});

(ii)  $C$ is \textbf{a strict  intersection bialgebra} if $\widehat{\Lambda} C$
admits the $\Cap$ -- product defined by the formula given by (\ref{strictCap}) and satisfying 
 (\ref{comodule})--(\ref{zero}) (see Examples \ref{strictexampl1} and \ref{strictexampl2}  below).

\subsection{The twisted tensor product  $\Lambda C \otimes _\tau
 \Omega C $ }\label{twistedsec}

 Let $(C,d_C , \Delta)$ be a dg coalgebra, and
 consider the tensor product of chain complexes
  $\Lambda C\otimes \Omega C .$
 Using Sweedler's notations $\Delta(u)=u_{(1)}\otimes u_{(2)}$ and $\Delta(u)=u'\otimes u''$ as well
 define a map $\Theta=\Theta_1+\Theta_2,$
 \[ \Theta_i: \Lambda C\otimes \Omega C\rightarrow \Lambda C\otimes \Omega C ,\ \ i=1,2\ \   \text{for} \ \   u]a\otimes b \in  \Lambda C\otimes \Omega C  \ \ \text{by}  \]
 \[\Theta_1(u]a\otimes b)= u_{(1)}\,] (\bar u_{(2)})^{'}\!a \otimes  (\bar u_{(2)})^{''} b\ \
  \text{and}\ \   \Theta_2(u]a\otimes b)=  u_{(2)}\,]\, a (\bar u_{(1)})^{'}\!\otimes\,  b \,(\bar u_{(1)})^{''}\!\!.  \]
   Then the tensor product of modules  $ \Lambda C\otimes
 \Omega C$   with the differential
  $d_\Theta:= d_{\Lambda} \otimes 1 +1\otimes d_{\Omega}+\Theta   $
  is denoted by
  $\Lambda C \otimes _\tau \Omega C. $ The equality  $d^2_\Theta=0$
 uses the fact that  $\Delta: C\rightarrow  C\otimes C$ is chain; in particular, $\Theta\circ \Theta=0.$ Also note that $\theta_i\otimes 1$ is a summand component of $\Theta_i$ for $i=1,2;$ since for $\alpha
 =u]a\otimes b\in \Lambda C \otimes_\tau \Omega C,$  $\Theta(\alpha)$ contains $1]\bar ua\otimes b$ and
 $1]a\bar u\otimes b$  as summand components, $\alpha$
 is uniquely resolved from the equality $\Theta_i(\alpha)= \beta .$
Define the following chain maps
\[  \nu_r: \Lambda C\rightarrow \Lambda C\otimes_\tau \Omega C \ \  \text{and}
\ \
\nu_l: \Lambda C\rightarrow \Omega C\otimes_\tau \Lambda C,
\]
\[  \mu_r: \Lambda C \otimes \Omega C \rightarrow \Lambda C  \ \  \text{and}
\ \
\mu_l:    \Omega C\otimes_\tau  \Lambda C   \rightarrow  \Lambda C
\]
and
\[ \mathcal{T}_r: \Lambda C \otimes \Lambda C \otimes_\tau \Omega C \rightarrow \Lambda C \otimes \Lambda C \otimes_\tau\Omega C
\ \  \text{and}\ \
 \mathcal{T}_l: \Omega C \otimes_\tau \Lambda C \otimes \Lambda C \rightarrow \Omega C \otimes _\tau \Lambda C \otimes\Lambda C
 \]
by the formulas given by (\ref{nu}),  (\ref{mur}) -- (\ref{mul})
and (\ref{tr}) -- (\ref{tl}), respectively.

\subsection{Loop bialgebra}\label{loopbialgebra}

Let $C:=(C,d_C,\Delta, m)$ be
a (strict) intersection bialgebra such that it  is  a hat-Hirsch coalgebra
$(C,d_C, \{E^{p,q}\}),$ too.

(i) An intersection bialgebra $C$ is \textbf {loop bialgebra} if
 the following chain   homotopies  hold for     $\widehat{\Lambda} C:$

  \begin{equation}\label{lb1}
  \Delta_\Lambda\circ \Cap\simeq
       (\Cap\otimes \mu_r)\circ (1\otimes \mathcal{T}_r)  \circ (\Delta_\Lambda\otimes \nu_r)
  \end{equation}
and
\begin{equation}\label{lb2}
   \Delta_\Lambda\circ \Cap\simeq
       (\mu_l\otimes\Cap )\circ (\mathcal{T}_l\otimes 1)  \circ (\nu_l\otimes \Delta_{\Lambda}) .
  \end{equation}

 (ii)  A strict intersection bialgebra $C$ is \textbf {strict loop bialgebra} if
 the following   equalities   hold for     $\widehat{\Lambda} C:$
        \begin{equation}\label{slb1}
 \Delta_\Lambda\circ \Cap=
       (\Cap\otimes \mu_r)\circ (1\otimes \mathcal{T}_r)  \circ (\Delta_\Lambda\otimes \nu_r)
  \end{equation}
and
\begin{equation}\label{slb2}
 \Delta_\Lambda\circ \Cap=
       (\mu_l\otimes\Cap )\circ (\mathcal{T}_l\otimes 1)  \circ (\nu_l\otimes \Delta_{\Lambda}) .
  \end{equation}

Note that $C=C_\ast(X)$ is a loop bialgebra   as it satisfies (\ref{lb1}) -- (\ref{lb2}) (cf. (\ref{cbistring1}) -- (\ref{cbistring2})). For a strict loop bialgebras see Examples \ref{strictexampl1} and \ref{strictexampl2} below.

\subsection{$\Bbbk$ \!-\! Formal (co)algebras (spaces)}

A dg $\Bbbk$ -- coalgebra $(C,d_C)$  is $\Bbbk$--\textbf{formal} (or shortly, formal)  if there is a dg coalgebra $(B,d_B)$  with  zig-zag dg coalgebra maps
\[    (C,d_C)\rightarrow (B, d_B) \leftarrow  (H(C,d_C),0)  \]
inducing isomorphisms in homology. A space $X$ is $\Bbbk$--\textbf{formal} if the chain
coalgebra  $C_\ast(X;\Bbbk)$ is so. For a formal dg coalgebra $C$ we have the isomorphisms
\[  H_\ast(\Omega C)\approx H_\ast(\Omega H )\ \  \text{and}\ \  H_\ast(\Lambda C)\approx H_\ast(\Lambda H ) \ \  \text{for}\ \ H:=H_\ast(C).     \]
The  $\Cap$ -- product on $\Lambda C$ given by formula (\ref{Capformula})
for   an intersection  bialgebra  $C$
lifts to $\Lambda H$  when $C$ is formal.

Let calculate the string topology product for some  simply connected formal spaces (compare \cite{CJY}).
\begin{example}\label{strictexampl1}
  Let $n$ --  dimensional space   $X=\Sigma Y$ be a suspension on a polyhedron $Y.$
  Then $X$ is formal.  Assume that
  for any pair $x_i\otimes x_j\in H_i(X)\otimes H_j(X)$  
\[x_i\sqcap x_j=0 \ \  \text{unless}\ \ (i,j)\in  \mathcal{I}_n :=\{ (0,n),(n,0),(n,n)\}        \]
and
\[ x_0\sqcap x_n=x_n\sqcap x_0=x_0,\,  x_n\sqcap x_n=x_n \ \text{for a unique}\ x_n\in H_n(X)  \]
(e.g. $X$ is an $n$ -- sphere $S^n$).Then the coproduct
$\Delta : H_\ast\rightarrow H_\ast \otimes H_\ast$  on
$H_\ast:=H_\ast(X)$
consists only of the primitive part,  $\Delta(u)=u\otimes 1 + 1 \otimes u.$
Consequently,  the differential on $\Omega H_\ast$ is zero, and  we recover
  the Bott-Samelson isomorphism of algebras
\[  T^a( H_{>0}(Y)) \approx   H_\ast(\Omega H_\ast,0)\approx H_\ast(\Omega X)
 . \]
In fact $T^a( H_{>0}(Y))$  has the coproduct obtained  by multiplicative extension of the one on $H_\ast(Y),$ and the above isomorphism is the one of Hopf algebras,  since it is induced
by the inclusion $Y\hookrightarrow \Omega X.$ We assume that in turn $Y$ is a suspension in which case the coproduct on $H_\ast(Y)$ is trivial, too.

Furthermore,  the coHochschild differential $d_{\Lambda H}$ in $\Lambda H_\ast$ is equal to $\theta:=\theta_1+\theta_2$
with
\begin{equation*}
\begin{array}{llll}
\theta  (u]a) = - x_0]\bar ua +
 (-1)^{|\bar u ||a|} x_0]a\bar u .
\end{array}
\end{equation*}
Then $(H_\ast\,,\,\sqcap)$ with $\Cap: \Lambda H\otimes \Lambda H\rightarrow \Lambda H$ given by (\ref{strictCap})
is a strict intersection bialgebra as it satisfies (\ref{comodule})--(\ref{zero}).

It is immediate to detect generating $\theta$ -- cycles  in the coHochschild complex $\Lambda H_\ast=(H_\ast \otimes T^a (H_{>0}(Y)),\theta).$ Indeed,
let $x_i\in H_i$ be a basis element and $\bar x_i\in \Omega_{i-1} H_\ast.$
We have the following $\theta$ -- cycles:
\[  x_0],\, x_0]\bar x_i,\, x_i]\bar x_i \,(i\ \text{is odd}),\,  x_i]\bar x_i\bar x_i \,(i\ \text{is even}),  \]
and relations
$  x_i]a \, \Cap\, x_j]b =0 $ unless  $ (i,j)\in \mathcal{I}_n.$
In particular,
$x_0]\,\Cap\, x_n]\bar x_n=x_0]\bar x_n$  and
$x_0]\,\Cap\, x_n]\bar x_n \bar x_n =x_0]\bar x_n\bar x_n$
with   the relation  $\theta(x_n]\bar x_n)=- 2 x_0]\bar x_n\bar x_n. $
Consequently, denoting
$ a_0:=cls(x_0]),\,   a_i:=cls(x_0]\bar x_i)\, (i>0),\,
b_i := cls( x_i]\bar x_i), c_i:=cls(x_i]\bar x_i\bar x_i) , $
obtain
\[
 H_\ast(\Lambda X;\Bbbk)=
 \!\left\{
\begin{array}{lll}
\!\!\Bbbk[a_i,b_i,c_i]/
\left(a_ia_j,\,a_ib_l,\,a_ic_l,l<n,\, b_sb_t,\,b_sc_t,c_sc_t, (s,t)\notin \mathcal{I}_n \right),\\
\hspace{3.3in} n\  \text{is odd},
\vspace{1mm}\\
\!\!\Bbbk[a_i,b_i, c_i]/
\left(a_ia_j,\,a_ib_l,\, a_ic_l,l<n,\, b_sb_t,\,  b_sc_t,\,  c_sc_t,
 (s,t)\notin \mathcal{I}_n,\right.\\
 \left.
 \hspace{2.7in} 2a_0c_n \right),\
   n \  \text{is even}.
\end{array}
\right.
\]
 Since  $\Delta_{\Omega H}$ is obtained by the multiplicative extension
 of the primitive $\Delta_{H_\ast(Y)},$ the coalgebra $ H_\ast$ can be viewed as a trivial Hirsch coalgebra, hence
   $\Delta_{\Lambda H}=\Delta_H\otimes \Delta_{\Omega H}.$
   Consequently,  for the intersection bialgebra $H_\ast$ the equalities given by
  (\ref{slb1}) -- (\ref{slb2})
 hold,  and  $H_\ast$ is a  strict  loop bialgebra.

\end{example}
\begin{example}\label{strictexampl2}
  Let $X$  be  an $n$ --  dimensional space with $n$ even such that the cohomology $H^\ast(X)$ is a tensor product
   $\underset{1\leq i\leq r}{\bigotimes}\Bbbk[y_i]/y_i^{n_i+1}$
   of truncated polynomial algebras  with even dimensional generators $y_i\in H^{ev}(X).$ Thus, $n=|y_1|n_1+\cdots + |y_r|n_r.$ Assume that $X$ is formal (e.g.,  $r=1$).
   Assume also that $X$ is a Poincar\'e
duality  space with the isomorphism
\[   \phi: H_i(X)\xrightarrow{\approx} H^{n-i}(X)\ \ \text{and}\ \ \phi(u\sqcap v)=\phi(u)\phi(v)              \]
  (e.g. $X$  is the Cartesian product of  projective spaces).
  Then $( H_\ast(X)\,,\,\sqcap)$ with $\Cap: \Lambda H\otimes \Lambda H\rightarrow \Lambda H$ given by (\ref{strictCap})
is a strict intersection bialgebra as it satisfies (\ref{comodule})--(\ref{zero}).
   Denote the dual elements    of
    $y_1^{s_1}\otimes \cdots \otimes y_r^{s_r} \in H^{\ast}(X)  $ by $x_{s_1...s_r}\in H_{\ast}(X)$ for $0\leq \eta_\leq n_i.$

  For each $1\leq i\leq r,$ there is a $d_\Omega$-cycle in $\Omega H_\ast$ for $H_\ast:=H_\ast(X):$
  \[   \varpi_i:= \sum_{\substack{s_i+t_i=n_i+1\\1\leq s_i,t_i\leq n_i}} \bar x_{_0...\,s_i\,..._0}\bar x_{_0...\,t_i\,..._0}  ,
  \]
  and $\theta$ -- cycles in $\Lambda H_\ast:$
\[ \hspace{-2.8in}
  a'_i=
  x_{ n_1...n_i-1...n_r } ]\ \ \ \text{and} \]
    \[ c'_i= \sum_{\substack{  s_j+k_j+\ell_j=\epsilon_j n_j+1\\
    \epsilon_j-1\, \leq \,s_j,k_j,\ell_j\, \leq \, \epsilon_j n_j+1}}
  x_{s_1...s_r}]\bar x_{k_1...k_r} \bar x_{\ell_1...\ell_r},\ \ \ \epsilon_j=
  \begin{cases}
    2, &  j=i \\
    1, & \mbox{otherwise}.
  \end{cases}
  \]
   In particular, $c'_i$ contains a summand of the form
     $x_{n_1...n_r}]\,\varpi_i;$
    there is also a $\theta$ -- cycle
  \[
   b'= \sum_{1\leq k_i\leq n_i}
 k\, x_{n_1-k_1\,...\,n_r-k_r}]\,\bar x_{k_1...\, k_r}\ \  \text{for} \ \  k=k_1+\cdots+
k_r.    \]
The  $\sqcap$ -- product acts  in $H_\ast$ as
\[   x_{k_1...k_r} \sqcap x_{\ell_1...\ell_r}= x_{s_1...s_r}\ \ \text{with}\ \  s_i=k_i+\ell_i-n_i,\,1\leq i\leq r, \]
that in particular implies the   relations  in $\Lambda H_\ast:$
  \[\hspace{-1.6in}
  {a'_1}^{\Cap n_1} \Cap'\cdots \Cap' {a'_r}^{\Cap n_r}  =x_0]\ \   \text{and}  \]
    \[    x_0] \Cap' a'_i=0, \,\,\, \, x_0] \Cap'  b'=0, \,\,\, \,
  x_0]  \Cap'  c'_i=  x_0] \varpi_i\ \ \text{for all}\ \ i.
  \]
   Denote
   \[ \hspace{-0.3in} \tilde{b}: = \sum_{2\leq k_i\leq n_i}
     x_{n_1-k_1\,...\,n_r-k_r}]\bar x_{k_1...\, k_r}\ \   \text{and}\]
      \[\tilde c_i:= \underset {0\leq k_i<n_i} {\sum}(k_i+1)\,
       x_{_0\,...\,n_i-k_i\,..._0}]\bar x_{_0...\, k_i+1\,..._0},  \]
  so that
  \[     \theta(\tilde b)=b'\Cap b'   \        \ \ \text{and}\ \  \ \theta(\tilde c_i)= (n_i+1)\, x_0]\varpi_i=(n_i+1)\, x_0]  \Cap  c'_i. \]
  Denoting $a=cls(x_0]),\, a_i=cls(a'_i),\,\,  b=cls(b')$  and $c_i=cls(c'_i),$ obtain
  \[
 H_\ast(\Lambda X;\Bbbk)=\Bbbk[b]/(b^2)\otimes
 \Bbbk[a_i,b,c_i]/\left(aa_i,\,  ab,\,   (n_i+1)ac_i,  1\leq i\leq r\right).
\]

\end{example}
\noindent Assume that $X$ is a product of complex projective spaces $\mathbb{C}P^m.$ Since
the homotopy equivalence $\Omega \mathbb{C}P^m\simeq \Omega S^{2n+1}\times S^1,$ we have the coalgebra isomorphism $H_\ast(\Omega \mathbb{C}P^m)\approx H_\ast(\Omega S^{2n+1}\times S^1).$ Consequently, like the previous example
we can regard $H_\ast$ as a trivial Hirsch coalgebra, so that
   $\Delta_{\Lambda H}=\Delta_H\otimes \Delta_{\Omega H}.$
   Consequently,  for the intersection bialgebra $H_\ast$ the equalities given by   (\ref{slb1}) -- (\ref{slb2})  hold,  and  $H_\ast$ is a  strict  loop bialgebra.


\vspace{0.15in}


\begin{thebibliography}{99}



\bibitem{CS}
M. Chas and D. Sullivan, String topology, preprint, math.GT/9911159.


\bibitem{CJY} R. Cohen, J. D. S. Jones, and J. Yan, The loop homology algebra of spheres
and projective spaces, In Categorical decomposition techniques in algebraic topology (Isle of Skye, 2001),
  Progr. Math., Birkhäuser, Basel, 215 (2004), 77--92.

\bibitem{Friedman} G. Friedman, Singular intersection homology, Texas
    Christian University (2019).

 \bibitem {KStwisted} T. Kadeishvili and  S. Saneblidze,  The twisted
     Cartesian model for the double path fibration, Georgian Math. J.,
     22 (4) (2015), 489--508.

\bibitem{RS1} M. Rivera and S. Saneblidze,
A combinatorial model for the path fibration, J. Homology, Homotopy and
Appl., 14 (2019), 393--410.


\bibitem{RS2} M. Rivera and S. Saneblidze,
A combinatorial model for the free loop fibration, \textit{Bull.
L.M.S.},
 \textbf{50} (6) (2018), 1085--1101.


\bibitem{R-T} M. Rivera and A.Takeda, String topology via the coHochschild complex and local intersections, math.AT/ 2508.15684v2.

\bibitem{saneFREE} S. Saneblidze,  The bitwited Cartesian model for the free
loop fibration, Topology and Its Applications, 156 (2009), 897--910.


\bibitem{SU} S. Saneblidze and R. Umble, Comparing diagonals of associahedra, J. Homology, Homotopy and Appl., 26   (2024), 141--149.


\bibitem{Whitehead} G. Whitehead, Elements of Homotopy Theory,
    Springer-Verlag GTM 62 (1978).

\end{thebibliography}
\end{document}